\documentclass[a4paper,11pt]{article}
\usepackage[utf8]{inputenc}
\usepackage[margin=30mm]{geometry}

\usepackage{amsmath,amsthm}
\usepackage{tikz-cd}

\usepackage{enumerate}
\usepackage{mathtools}
\usepackage{stmaryrd}
\usepackage[table,dvipsnames]{xcolor}
\usepackage[colorlinks=true, linkcolor=MidnightBlue, urlcolor=Blue, citecolor=MidnightBlue]{hyperref}
\usepackage[charter]{mathdesign}

\newcommand{\Csr}{\mathrm{C}_{\,\mathrm{r}}^*}
\newcommand{\Cs}{\mathrm{C}^*}
\newcommand{\bounded}{\operatorname{\mathfrak{B}}}
\newcommand{\compact}{\operatorname{\mathfrak{K}}}
\newcommand{\R}{\mathbb{R}}

\newcommand{\Ker}{\operatorname{Ker}}
\newcommand{\scal}[2]{\left\langle#1\,,\,#2\right\rangle}

\newcommand{\Prim}{\operatorname{Prim}}
\newcommand{\dom}{\operatorname{dom}}
\newcommand{\proj}{\operatorname{proj}}
\newcommand{\Ind}{\operatorname{Ind}}
\newcommand{\seq}{\mathrm{seq}}
\newcommand{\Iminfch}{\operatorname{Im\,inf\,ch}}
\newcommand{\adomstar}{\mathfrak{a}_{\dom}^\ast}
\newcommand{\aiplus}{\mathfrak{a}_{I, +}^\ast}
\newcommand{\hatMItempiric}{\widehat{M}_{I, \mathrm{tempiric}}}
\newcommand{\hatGtemp}{\widehat{G}_{\mathrm{tempered}}}
\newcommand{\Gtempiric}{\widehat{G}_{\mathrm{tempiric}}}
\newcommand{\hatGzero}{\widehat{G}_0}
\newcommand{\hatKI}{\widehat{K}_{I}}
\newcommand{\Khat}{\widehat{K}}
\newcommand{\LKT}{\operatorname{LKT}}
\newcommand{\mult}{\operatorname{mult}}
\newcommand{\amin}{\mathfrak{a}_{\min}}
\newcommand{\Pmin}{P_{\min}}
\newcommand{\Mmin}{M_{\min}}
\newcommand{\Amin}{A_{\min}}
\newcommand{\Nmin}{N_{\min}}
\newcommand{\nmin}{\mathfrak{n}_{\min}}
\newcommand{\doublequo}{\!/\!\!/}

\newcommand{\UI}{{U}_{I}}

\newcommand{\Knorm}[1]{\lVert{#1}\rVert_{\widehat{K}}}

\newcommand{\Ups}{\Upsilon}

\newcommand{\N}{\mathbb{N}}

\numberwithin{equation}{section}

\theoremstyle{plain}
\newtheorem{theorem}[equation]{Theorem}
\newtheorem{lemma}[equation]{Lemma}
\newtheorem{corollary}[equation]{Corollary}
\newtheorem{proposition}[equation]{Proposition}
\newtheorem*{theorem*}{Theorem}
\newtheorem*{proposition*}{Proposition}
\newtheorem*{corollary*}{Corollary}
\newtheorem*{lemma*}{Lemma}
\newtheorem*{claim*}{Claim}
\newtheorem*{theorema*}{Theorem A}
\newtheorem*{theoremb*}{Theorem B}

\newenvironment{theorembis}[1]
  {\renewcommand{\theequation}{\ref*{#1}$'$}%
   \addtocounter{equation}{-1}%
   \begin{theorem}}
  {\end{theorem}}

\theoremstyle{definition}
\newtheorem{definition}[equation]{Definition}
\newtheorem{example}[equation]{Example}

\newtheorem{notation}[equation]{Notation}
\newtheorem{remark}[equation]{Remark}

\title{The Mackey bijection as a stratified equivalence}
\author{Alexandre Afgoustidis\footnote{CNRS \& Institut Élie Cartan de Lorraine, Nancy \& Metz, France; \texttt{alexandre.afgoustidis@math.cnrs.fr}}\ \ \& Pierre Clare\footnote{College of William \& Mary, Williamsburg, VA, USA; \texttt{prclare@wm.edu}}}
\date{\empty}
\begin{document}

\maketitle

\begin{abstract}
This paper is about the Mackey analogy between the tempered representation theory of a real reductive group and that of its Cartan motion group. We consider the embedding of reduced C*-algebras constructed recently in connection with the Mackey bijection, and study its behavior on certain natural stratifications of the tempered duals. We formulate our result using a notion of stratified equivalence inspired by the study of the smooth dual of $p$-adic groups via the structure of Hecke algebras, in particular by the work of Aubert, Baum, Plymen and Solleveld. We derive related new topological properties of the Mackey bijection. We also analyze the behavior of the Mackey embedding on a stratification of reduced C*-algebras attached to a partition of the tempered dual into particularly elementary pieces, introduced in recent work of Bradd, Higson and Yuncken.
\end{abstract}

\section*{Introduction}

Let $G$ be a real reductive group.  The Mackey analogy connects the (tempered) representation theory of~$G$ with the (unitary) representation theory of its Cartan motion group. 
Since its inception by George Mackey in the 1970s as a tentative statement on the parametrization of representations, it has been studied from various points of view, including the theory of operator algebras and its interplay with canonical topologies on tempered duals \cite{Mackey75, Higson_08, Mackey_bijection}.

Among the byproducts of this evolution 
is some insight into the geometry of the tempered dual $\hatGtemp$:  the space $\hatGtemp$ (which is in general not Hausdorff in the Fell topology) can be assembled from a countable collection of contractible convex cones in finite-dimensional vector spaces. There are at least two distinct, but related, ways of doing this: in terms of lowest $K$-types, as in \cite{Mackey_CK_isom}, or in terms of the imaginary part of the infinitesimal character, as in \cite{BHY}. 

For reductive $p$-adic groups, similar phenomena occur in the work of Aubert--Baum--Plymen--Solleveld on the geometry of each Bernstein component of the smooth dual \cite{ABPS_Takagi, ABPS_Cstar}, showing that each individual component is usually assembled from elementary pieces of a slightly different kind. In this $p$-adic setting, a language gradually emerged to describe the phenomena, using a notion of ``{stratified equivalence}'' \cite{ABPS_Morita} to relate the Hecke algebras attached to Bernstein blocks to much simpler algebras. 

In this paper we use the Mackey embedding recently constructed in \cite{CHR} to recast the Mackey analogy in the language that was conceived for $p$-adic groups. We also prove various properties of the Mackey bijection \cite{Mackey_bijection} and of the Mackey embedding \cite{CHR}, in relation with various stratifications of the reduced $\Cs$-algebra of~$G$.

Before we describe our results, let us provide some background on the Mackey analogy for real groups, and on the $p$-adic phenomena that underlie the notion of stratified equivalence.

\paragraph*{The Mackey bijection and the Mackey embedding.} Let $K$ be a maximal compact subgroup of~$G$. The Cartan motion group $G_0$ attached to $G$ and~$K$ is the semidirect product $K \ltimes (\mathfrak{g}/\mathfrak{k})$, built from the adjoint action of $K$ on the vector space quotient $\mathfrak{g}/\mathfrak{k}$ of the Lie algebras of~$G$ and~$K$.  The  group $G_0$ has  the same dimension as~$G$, but a large abelian normal subgroup; therefore its unitary dual $\hatGzero$ is easy to describe using Mackey's ``little group'' method. The tempered dual $\hatGtemp$ is more subtle. But a few years ago, building on suggestions of Mackey \cite{Mackey75} and Higson \cite{Higson_08}, one of us  described in \cite{Mackey_bijection} a natural bijection  
\[ \hatGzero \longleftrightarrow \hatGtemp\] using Vogan's notion of lowest $K$-types. This ``Mackey bijection'' is not a homeomorphism in the Fell topology (unless $G$ and $G_0$ are isomorphic), but it is a \emph{piecewise homeomorphism}: for each finite collection~$E$ of $K$-types, it induces a homeomorphism between the subsets consisting of the representations whose set of lowest $K$-types is exactly~$E$ \cite{Mackey_CK_isom, Mackey_continuity}. 

The Mackey bijection has long been known to have connections the Connes--Kasparov isomorphism in operator algebras \cite{BCH, Higson_08, Mackey_CK_isom}: it was Alain Connes who pointed out in the 1980s that the Connes--Kasparov isomorphism for real reductive groups can be interpreted as a $K$-theoretic reflection of Mackey's ideas.  In recent work of another of us with Higson and Rom\'an  \cite{CHR}, building on ideas introduced in Rom\'an's thesis, the  bijection was promoted to a $\Cs$-algebraic embedding 
\[ \alpha\colon \Cs(G_0) \longrightarrow \Csr(G)\] of the maximal $\Cs$-algebra of $G_0$ into the reduced $\Cs$-algebra of~$G$. The Mackey bijection is then recovered by taking an irreducible tempered representation of~$G$, viewed as a representation~$\pi$ of~$\Csr(G)$; composing it with the embedding $\alpha$; and, viewing the composition $\pi\circ\alpha$ as a representation of~$G_0$, extracting the irreducible component that contains the lowest $K$-types. One important property of the embedding~$\alpha$ is that it induces the Connes--Kasparov isomorphism in K-theory, giving operator-algebraic substance to the idea that $\hatGzero$~and $\hatGtemp$ are similar. In this spirit, it is also known \cite{Mackey_CK_isom} that $\Cs(G_0)$ and $ \Csr(G)$ are assembled from very simple common pieces by Morita equivalences and direct limits; as a result, the homeomorphic pieces of $\hatGzero$ and $\hatGtemp$ discussed above are assembled in such a way that the two duals share algebraic-topological invariants.

\paragraph*{Bernstein components for $p$-adic groups; stratified equivalence.} In the past 20 years, but quite independently of the Mackey analogy, conceptually similar phenomena took on an increasingly important role in certain aspects of the representation theory  of reductive $p$-adic groups. These appear when one focuses on the geometry of a \emph{single connected component} $\mathfrak{s}$ of the smooth dual, the space of equivalence classes of irreducible smooth representations. One can attach to the  component $\mathfrak{s}$ a Hecke algebra $\mathscr{H}_{\mathfrak{s}}$, a complex algebra whose primitive ideal space identifies with~$\mathfrak{s}$. Under some tameness assumptions and in many cases, including the  emblematic case where $\mathfrak{s}$ is the Iwahori component, the algebra $\mathscr{H}_{\mathfrak{s}}$ is Morita-equivalent to an affine Hecke algebra $\widetilde{\mathscr{H}}_{\mathfrak{s}}$, whose structure therefore encapsulates much of the representation theory.

It gradually emerged from the work of Lusztig \cite{Lusztig} and of Baum--Nistor \cite{BaumNistor} for the Iwahori component, and of Aubert--Baum--Plymen--Solleveld in general (see the surveys \cite{ABPS_Takagi, ABPS_Cstar}), that $\widetilde{\mathscr{H}}_{\mathfrak{s}}$ has a remarkable relation with a simpler algebra $\widetilde{\mathscr{H}}_0$ that is reminiscent of the $\Cs$-algebra~$\Cs(G_0)$ in the real case. This  $\widetilde{\mathscr{H}}_0$ is the group algebra $\mathbb{C}[W \ltimes X]$, where $X$ is the character lattice of a compact torus and $W$ is a finite group acting on $X$. The algebras $\widetilde{\mathscr{H}}_{\mathfrak{s}}$ and $\widetilde{\mathscr{H}}_0$ share the same underlying vector space, but the algebra structure of~$\widetilde{\mathscr{H}}_{\mathfrak{s}}$ is a deformation of that of $\widetilde{\mathscr{H}}_0$, and much more complicated.  Nevertheless, it turns out that the primitive ideal spaces of $\mathscr{H}_\mathfrak{s}$ and $\widetilde{\mathscr{H}}_0$ are in natural bijection, and piecewise homeomorphic. 

To discuss such phenomena, Baum--Nistor and Aubert--Baum--Plymen--Solleveld introduce a notion of ``{stratified equivalence}''. At its heart is a notion that we shall call ``stratified equivalence morphism'' below (see Section~\ref{sec-strat-defs} for details): a stratified equivalence morphism between two algebras $A$ and~$B$ is an algebra homomorphism which respects certain filtrations $(A_i)$, $(B_i)$ of $A$ and~$B$, and which on subquotients has the  special property that the preimage of any primitive ideal in a subquotient $B_{i}/B_{i-1}$ is contained in a unique primitive ideal of $A_i/A_{i-1}$. Such a morphism induces a natural bijection between the primitive ideal spaces of~$A$ and~$B$. If $A$ and~$B$ are finitely generated and the filtrations are finite, Baum and Nistor show that this bijection is a piecewise homeomorphism; more precisely, that it induces a homeomorphism between the spectra of $B_{i}/B_{i-1}$  $A_i/A_{i-1}$ for all $i$, but not a global homeomorphism in general.  

Aubert, Baum, Plymen and Solleveld have highlighted the role of this notion in the $p$-adic theory. For instance, if $\mathfrak{s}$ is the Iwahori component of a reductive $p$-adic group, then each of $\mathscr{H}_\mathfrak{s}$ and $\widetilde{\mathscr{H}}_0$ can be related, via a stratified equivalence morphism, to a version of Lusztig's asymptotic Hecke algebra. This plays an important role for the discussion of the geometric structure of $\mathfrak{s}$ in the work of Aubert, Baum, Plymen and Solleveld; and also in the computation of the $K$-theory of $\mathscr{H}_\mathfrak{s}$, which  sheds light on the analytic side of the Baum--Connes conjecture. See \cite{ABPS_Cstar, SolleveldTrimestre}.

\paragraph*{Our results.} The striking parallels between the real and $p$-adic phenomena described above have been known to the experts for a few years. However, by providing a concrete map at the level of group algebras, the construction of the Mackey embedding $\alpha\colon \Cs(G_0)\longrightarrow \Csr(G)$ in \cite{CHR} now makes it possible to study these parallels more formally. In this paper, we shall prove a first result in this direction, and see that it reveals new properties of the Mackey bijection.

In Section~\ref{sec-strat-eq} we consider two pairs  of filtrations for $\Cs(G_0)$ and $\Csr(G)$, which we shall denote by $(I_n^0)_{n \in \N}$, $\left(I_n\right)_{n \in \N}$ and $(\Ups^0_k)_{k \in \N}$, $(\Ups_{k})_{k \in \N}$ respectively. Both are defined in terms of lowest $K$-types. The filtrations  $(I_n^0)_{n \in \N}$, $\left(I_n\right)_{n \in \N}$ of $\Cs(G_0)$ and $\Csr(G)$ are those which appeared in \cite{Mackey_CK_isom} in connection with the Connes--Kasparov isomorphism. The filtrations $(\Ups^0_k)_{k \in \N}$ and $(\Ups_{k})_{k \in \N}$ are coarser and only keep track of the ``height'' of lowest $K$-types. We prove the following result (see Theorem~\ref{th-strat-LKT}).

\begin{theorema*} \label{th-a} The Mackey embedding  $\alpha\colon \Cs(G_0)\longrightarrow \Csr(G)$ is a stratified equivalence morphism with respect to the filtrations $\left(I_n^0\right)_{n \in \N}$ and $\left(I_n\right)_{n \in \N}$ of $\Cs(G_0)$ and $\Csr(G)$. It is also a stratified equivalence morphism  with respect to the filtrations $\left(\Ups^0_k\right)_{k \in \N}$ and $(\Ups_{k})_{k \in \N}$.
\end{theorema*}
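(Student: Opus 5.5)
The plan is to reduce everything to the structure of $\alpha$ on lowest-$K$-type ideals and to the Mackey bijection. Recall the setup from \cite{Mackey_CK_isom}: we enumerate $\Khat$ compatibly with the norm $\Knorm{\cdot}$ (Vogan's height), say $\mu_1,\mu_2,\dots$ with nondecreasing norms. $I_n$, resp.\ $I_n^0$, is the closed ideal of $\Csr(G)$, resp.\ $\Cs(G_0)$, consisting of elements that vanish on every irreducible (tempered) representation none of whose lowest $K$-types lies in $\{\mu_1,\dots,\mu_n\}$. Equivalently, it is the ideal generated by the isotypic projections $p_{\mu_1},\dots,p_{\mu_n}$. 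The $\Ups_k$, $\Ups_k^0$ are defined in the same way, but with all $K$-types of height $\le k$.

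\textbf{Step 1: $\alpha$ respects the filtrations.} From \cite{CHR}, $\alpha$ is $K$-equivariant and sends the isotypic projection $p_\mu$ of $\Cs(G_0)$, as a multiplier, to $p_\mu$ in $\Csr(G)$. Since $I^0_n$ is generated by $p_{\mu_1},\dots,p_{\mu_n}$, we get $\alpha(I_n^0)\subseteq I_n$. We can check this directly on representations. Let $\pi$ be tempered irreducible and $a\in I_n^0$. Then $\pi\circ\alpha$ is a representation of $G_0$, and we want $\pi(\alpha(a))=0$ whenever no lowest $K$-type of $\pi$ lies among $\mu_1,\dots,\mu_n$. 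For this we use the key property from \cite{CHR}: the $K$-types occurring in $\pi\circ\alpha$ are those of $\pi$, and the irreducible constituents of $\pi\circ\alpha$ have lowest $K$-types of height at least $\min\LKT(\pi)$. The subquotient $I_n/I_{n-1}$ has spectrum consisting of those $\pi$ with $\mu_n\in\LKT(\pi)$ and no $\mu_j$, $j<n$, in $\LKT(\pi)$. The same argument works for $\Ups$.

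\textbf{Step 2: the stratified property on subquotients.} Take $\pi$ in the spectrum of $I_n/I_{n-1}$. We must show that $\ker(\pi)\cap$ (the image of $\alpha$ on $I_n^0/I_{n-1}^0$) pulls back into exactly one primitive ideal of $I_n^0/I_{n-1}^0$. The candidate is the kernel of $\pi_0=\mathcal M^{-1}(\pi)$, the Mackey partner. By the description of the Mackey bijection via $\alpha$, $\pi_0$ is the constituent of $\pi\circ\alpha$ containing the lowest $K$-types, so $\ker\pi_0\supseteq\alpha^{-1}(\ker\pi)$. To prove uniqueness, we show that every other constituent $\sigma$ of $\pi\circ\alpha$ restricted to $I_n^0$ has its lowest $K$-types strictly higher, or at least not among $\mu_1,\dots,\mu_n$. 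Such a $\sigma$ then vanishes on $I_n^0$, or factors through $I_{n-1}^0$, so the restriction of $\pi\circ\alpha$ to $I^0_n$ modulo $I^0_{n-1}$ is a multiple of $\pi_0$, which is irreducible (or is $\pi_0$ itself). The preimage ideal is then $\ker\pi_0$, a primitive ideal. A primitive ideal $J$ containing $\ker\pi_0$ must correspond to a point in the closure of $\{\pi_0\}$. On the stratum, the spectrum of $I_n^0/I_{n-1}^0$ is Hausdorff-like: the set with fixed lowest $K$-type is a quotient of a cone by a finite group. Points there are closed, so $J=\ker\pi_0$.

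\textbf{Main obstacle.} The hard part is the uniqueness claim: the other constituents of $\pi\circ\alpha$ must not contribute at level $n$. If $\pi\circ\alpha$ is not known to be irreducible on the $\mu_n$-isotypic part, I would instead compress by $p_{\mu_n}$. The algebra $p_{\mu_n}(I_n^0/I_{n-1}^0)p_{\mu_n}$ is Morita equivalent to the subquotient and is commutative (the $\mu_n$-component is the Mackey-model function algebra by \cite{Mackey_CK_isom}). Compressed, $\alpha$ becomes an isomorphism of the corresponding corners, because the Mackey bijection is a homeomorphism on each fixed-lowest-$K$-type stratum and $\mu_n$ has multiplicity one in $\pi$ and $\pi_0$. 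Primitive ideals then correspond bijectively through this corner isomorphism, which gives the required uniqueness. The $\Ups$ case follows by taking the finite direct sum over the $K$-types of a given height.
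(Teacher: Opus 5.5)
Your strategy is the paper's own: decompose $\pi\circ\alpha\simeq\bigoplus_\lambda \mult(\lambda,\tau)\,\varrho_{I,\lambda,\nu}$, show that on the relevant ideal only the summand $\varrho_{I,\mu,\nu}\simeq\mathcal{M}^{-1}(\pi)$ survives, deduce $\alpha_n^{-1}(\ker\pi)=\ker\pi_0$ on subquotients, and get uniqueness from closedness of points. As written, however, the proposal does not prove the first assertion, because you are using the wrong filtration.

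The $(I_n)$, $(I_n^0)$ of the statement (the $\mathbf{I}[n]$ of \cite{Mackey_CK_isom}) are not indexed by single $K$-types. They are indexed by the countable set $\mathscr{E}$ of possible \emph{sets} of lowest $K$-types, listed by increasing height and, at equal height, by decreasing cardinality. $I_n$ annihilates the representations whose restriction to $K$ contains none of $E_1,\dots,E_n$, and $\operatorname{Spec}(I_n/I_{n-1})=\LKT^{-1}(E_n)$. Your strata are different: for $\mathrm{SL}(2,\R)$, your $\lambda_1$-stratum lumps the nonspherical principal series together with a limit of discrete series, which the paper's filtration separates.

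Your Step~1 carries over without trouble: every summand of $\pi\circ\alpha$ has its $K$-types among those of $\pi$, so if $\pi_{|K}\not\supset E_j$ then no summand contains $E_j$. Step~2 on the true filtration, however, needs an ingredient you never use. This is ordering condition~(ii), via Remark~\ref{remark-ktypes-ordering}, which ensures that a $\pi$ with $\LKT(\pi)=E_n$ lies in $\mathscr{W}_{n-1}$. In other words, it guarantees that the $n$-th strata on both sides are exactly $\LKT^{-1}(E_n)$. That identification, together with \eqref{eq-mackey-preserves-lkt}, is also what gives condition~(ii) of Definition~\ref{def-baumnistor} (bijectivity), which your main argument never checks.

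The step you label the main obstacle is also left unproved, and it is where the content lies. It follows at once from Remark~\ref{rk-restriction-ktypes}:
$\varrho_{I,\tau,\nu}$ has the same restriction to $K$ as $\pi$; the summand $\varrho_{I,\mu,\nu}$ occurs once and contains every element of $\LKT(\pi)$; and these lowest $K$-types occur with multiplicity one in $\pi$. Hence every other summand has \emph{only} $K$-types of height $>\Knorm{\LKT(\pi)}$; this is Lemma~\ref{lem-technical-length}. Such summands contain no $K$-type of height $\le\Knorm{\LKT(\pi)}$. They therefore vanish on $I_n^0$ when $\LKT(\pi)=E_n$, and on $\Ups_k^0$ when $\Knorm{\LKT(\pi)}=R_k$, so $\pi\circ\alpha$ induces just $\pi_0$ on the subquotient.

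Your compression fallback is unnecessary and flawed as stated. On the stratum, $p_\mu A p_\mu$ is $\operatorname{End}(V_\mu)$-valued, so it is not commutative when $\dim\mu>1$. Moreover, the claimed corner isomorphism presupposes exactly the multiplicity information it was meant to circumvent.

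Two further justifications need replacing. First, points of the strata are indeed closed, but not because the strata are ``Hausdorff-like''. The height strata are not Hausdorff: for $\mathrm{SL}(2,\R)$, as $\chi\to0$ the $G_0$-representations $\varrho_{\chi,\mathrm{sgn}}$ converge to both $\varrho_{0,\lambda_1}$ and $\varrho_{0,\lambda_{-1}}$. The right reason, used in the paper, is that $\Cs(G_0)$ is liminal, so all its subquotients are, and their primitive ideals are maximal. Second, the $\Ups$ case does not follow from a direct sum over $K$-types of a given height. Those pieces are glued rather than summed, and a stratified equivalence for a finer filtration does not imply one for a coarser filtration. It follows instead by running the same argument directly with $\Ups^0_k$, $\Ups_k$.
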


The bijection between $\hatGtemp$ and $\hatGzero$ induced by the stratified equivalence property is, in both cases, the Mackey bijection. 
Once this is proved, the work of Baum and Nistor \cite{BaumNistor} strongly suggests that the bijection is a piecewise homeomorphism, and induces a homeomorphism between pieces of $\hatGzero$ and $\hatGtemp$ corresponding to the spectra of our filtrations' subquotients. In the case of the filtrations  $(I_n^0)_{n \in \N}$, $\left(I_n\right)_{n \in \N}$ this was already known \cite{Mackey_CK_isom}, and amounts to the above statement on the behavior of the Mackey bijection when a fixed set of lowest $K$-types is prescribed. But for the coarser filtration, this reveals a stronger property of the Mackey bijection which is, to our knowledge, new. It amounts to the following statement (see Corollary~\ref{cor-homeo-composantes}).

\begin{theoremb*} \label{th-b} The Mackey bijection maps  each connected component of $\hatGtemp$ homeomorphically onto its image in $\hatGzero$. 
\end{theoremb*}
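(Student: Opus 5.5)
The plan is to deduce Theorem B from the second half of Theorem A, the stratified equivalence for the coarse filtrations $(\Ups^0_k)_{k\in\N}$ and $(\Ups_k)_{k\in\N}$. We combine it with a Baum--Nistor type statement: a stratified equivalence morphism of $\Cs$-algebras induces homeomorphisms between the spectra of corresponding subquotients. The argument has three steps.

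\emph{Step 1: identify the pieces.} For each height $k$, the spectrum of $\Ups_k/\Ups_{k-1}$ is the locally closed subset of $\hatGtemp$ of representations whose lowest $K$-types have height exactly $k$. Likewise, the spectrum of $\Ups^0_k/\Ups^0_{k-1}$ is the corresponding subset of $\hatGzero$.

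\emph{Step 2: the bijection is a homeomorphism on each piece.} The stratified equivalence property gives, for each $k$, a map $\pi\mapsto$ (the unique primitive ideal of $\Ups^0_k/\Ups^0_{k-1}$ containing $\alpha^{-1}(\Ker\pi)$). We check that this map is continuous for the hull-kernel topology. Given a closed set $F$ defined by an ideal $J$ of $\Ups_k/\Ups_{k-1}$, its image is the hull of $\alpha^{-1}(J)$. This uses that $\alpha$ restricted to the subquotient is injective at the level of spectra and that the containment is unique. The inverse is then continuous by the same argument run with closed sets on the $G_0$ side. If the plain hull-kernel argument only yields continuity in one direction, we fall back on the known homeomorphisms for the finer filtration $(I_n)$ from \cite{Mackey_CK_isom}, and on the fact that each $\Ups$-piece is a union of $I$-pieces that are open and closed in it.

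\emph{Step 3: connected components lie in one piece.} Here the key point is that the height of lowest $K$-types is locally constant on $\hatGtemp$: it is constant on each connected component. I would show this through the description of $\hatGtemp$ as a union of cones attached to cuspidal data $(P, \sigma)$, each varying continuously in $\nu\in\mathfrak{a}^*$. Along such a family the set of lowest $K$-types is constant on the open cone and, at walls where reducibility occurs, passes to subsets. In every case the height, a function of the Vogan norm $\Knorm{\cdot}$, stays constant. Distinct families with different heights are separated because both $\Ups_k$ and $\Ups_{k-1}$ are ideals, so each stratum is both open and closed in the union of itself with lower strata. Iterating this, each connected component of $\hatGtemp$ is contained in a single stratum, and that stratum is a union of components. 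The Mackey bijection restricted to that stratum is a homeomorphism onto its image by Step 2, hence so is its restriction to the component.

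The main obstacle is Step 3. Stratum-closedness alone only says each stratum is locally closed, and a component could a priori cross from height $k$ into the closure of height $k-1$. Proving that the height is locally constant, so that this crossing cannot happen, requires the concrete fact that limits of tempered representations in the Fell topology share lowest-$K$-type height. I would first try to derive this from the structure of the filtration (the subquotients $\Ups_k/\Ups_{k-1}$ being direct summands at the spectral level) before resorting to the explicit cone description.
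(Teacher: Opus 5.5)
\paragraph{Step 2 does not go through.}
This step carries the entire content of the theorem, and your argument does not establish it.

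Of your two hull--kernel computations, only the one starting on the $G_0$ side is valid. For a closed ideal $J_0$ of $\Ups^0_k/\Ups^0_{k-1}$, the set of $\pi$ with $(\alpha_k^\Ups)^{-1}(\Ker\pi)\supseteq J_0$ is the hull of the ideal generated by $\alpha_k^\Ups(J_0)$. So $\pi\mapsto\varrho$, i.e.\ $\mathcal{M}^{-1}$ on the piece, is continuous. That is the easy direction, and it is even known globally \cite{Mackey_continuity}.

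For the hard direction you assert that the image of $\operatorname{hull}(J)$ is $\operatorname{hull}\bigl((\alpha_k^\Ups)^{-1}(J)\bigr)$, but only the inclusion $\subseteq$ is formal. The reverse inclusion says: if $\varrho$ kills every element that $\alpha$ sends into $J$, then $\pi$ kills $J$. Since $\alpha$ is far from surjective, injectivity of the spectral map and uniqueness of the containing primitive ideal say nothing about this. In fact that equality is equivalent to the continuity of $\mathcal{M}$ on the piece, which is what you want to prove. This is exactly Baum--Nistor's automatic-homeomorphism theorem, which needs finitely generated algebras. The paper stresses that $\Cs(G_0)$ and $\Csr(G)$ are not finitely generated, and that no $\Cs$-analogue is known.

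Your fallback fails too, because the $I$-pieces are not open and closed in the $\Ups$-pieces. Take $G=\mathrm{SL}(2,\R)$ and the stratum of height $R=\Knorm{\lambda_1}=\Knorm{\lambda_{-1}}$. The nonspherical principal series $\pi^-_\nu$ ($\nu>0$) have lowest $K$-types $\{\lambda_1,\lambda_{-1}\}$. As $\nu\to0$ they converge to both limits of discrete series, whose lowest $K$-type sets are $\{\lambda_1\}$ and $\{\lambda_{-1}\}$. So those pieces are closed but not open in $\hatGtemp(R)$. This is why the paper calls the $\Ups$-statement strictly stronger than \cite[Corollary~3.2]{Mackey_CK_isom}.

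\paragraph{What is missing.}
You need an actual proof that $\mathcal{M}$ is continuous on $\hatGzero(R)$; the paper gives a representation-theoretic one in Theorem~\ref{prop-piecewise-homeo}.
\begin{itemize}
\item Take $\varrho_n\to\varrho_\infty$ inside $\hatGzero(R)$. Baggett's description gives $\nu_n\to\nu_\infty$, $K_\seq\subset K_\infty$ and $\mu_\infty\subset\Ind_{K_\seq}^{K_\infty}(\mu_\seq)$, hence $\Ind_{K_\infty}^K(\mu_\infty)\subset\Ind_{K_\seq}^K(\mu_\seq)$.
\item The equal-height hypothesis upgrades this to $\LKT(\pi_\infty)\subset\LKT(\pi_n)$.
\item By Delorme's result, every constituent of $\Ind_{P_\seq}^G(\tau_\seq\otimes e^{i\nu_\infty})$ is a limit point of $(\pi_n)$.
\item Inducing in stages through $P_\infty$ and using Lemma~\ref{lem-technical-length}, one shows that $\tau_\infty$ is a constituent of $\Ind_{\widetilde{P}}^{M_\infty}(\tau_\seq)$. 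Hence $\pi_\infty$ is one of these limit points.
\end{itemize}
The height hypothesis is essential. In the $\mathrm{SL}(2,\R)$ example, the representations $\mathcal{M}^{-1}(\pi^-_\nu)$ of $G_0$ also converge to the one-dimensional representation $\lambda_3$ of $G_0$. Its image under $\mathcal{M}$ is a discrete series representation, which is isolated in $\hatGtemp$.

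\paragraph{Step 3.}
This is fine in outline; the paper simply cites \cite[Section~2.3]{Mackey_continuity} for the fact that each component lies in a single $\hatGtemp(R)$. As you suspect, the ideal structure alone only gives local closedness of the strata. For Theorem~B you only need each component to lie in one stratum, since the restriction of a homeomorphism to any subset is a homeomorphism onto its image.
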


Thus the unitary dual $\hatGzero$ is obtained by gluing together the components of~$\hatGtemp$ in a rather peculiar manner, that preserves algebraic-topological invariants. Although this result was suggested to us by stratified equivalence, in Section~\ref{sec-topology} we shall give a direct representation-theoretic proof. 

\medskip

Up to this point our discussion has been focused on two partitions of $\hatGzero$ and $\hatGtemp$ according to lowest $K$-types, and on the corresponding stratifications of $\Cs(G_0)$ and $\Csr(G)$. While this is very useful to understand the Mackey bijection, recent work of Bradd, Higson and Yuncken \cite{BHY} has shown that much further insight on the bijection can be gained by considering another way to partition  $\hatGzero$ and $\hatGtemp$, according to the ``singularity'' of the imaginary part of the infinitesimal character. This makes the construction of \cite{Mackey_bijection} quite transparent, and in our discussion of the Mackey bijection in Section~\ref{sec-prelim} we will take up notation and ideas from \cite{BHY}. 

The partition of $\hatGzero$ and $\hatGtemp$ according to the imaginary part of the infinitesimal character also has its own operator-algebraic counterpart. On their way to proving the main results of \cite{BHY}, the authors consider corresponding filtrations of $\Cs(G_0)$ and~$\Csr(G)$. For a given filtration piece,  the individual subquotients $\Cs(G_0; I)$ and $\Csr(G; I)$ both have spectrum the set of representations whose imaginary part of the infinitesimal character lies on a specific wall in the dominant chamber of all possible imaginary parts. Bradd, Higson and Yuncken point out that these subquotients have a very simple structure up to $\Cs$-algebra isomorphism, and that $\Cs(G_0; I)$ and~$\Csr(G; I)$ are Morita-equivalent. In this way the infinitesimal character leads to a partition of $\hatGzero$ and~$\hatGtemp$ into yet another set of contractible homeomorphic pieces, assembled differently in both duals. 

The proofs of the main results in \cite{BHY} feature their own versions of the Mackey embedding; these are much simpler than the ``global'' Mackey embedding $\alpha\colon \Cs(G_0) \longrightarrow \Csr(G)$ of \cite{CHR}, but they are defined only at the level of the subquotients $\Cs(G_0; I)$ and $\Csr(G; I)$ using the very simple structure of these subquotients. Since a large portion of this paper is about the behavior on the Mackey embedding~$\alpha$ on various filtrations of the $\Cs$-algebras of $G_0$ and~$G$, we have taken this opportunity to clarify the relationship between the ``global'' embedding constructed in \cite{CHR} and the ``local'' versions that appear in \cite{BHY}. This is the theme of  Section~\ref{sec-localizations}.

\paragraph*{Organization of the paper.}  Section~\ref{sec-prelim} collects the definitions and results on the tempered dual, and on the Mackey bijection and embedding, that will be used throughout the paper. As mentioned above, the influence of the recent work of Bradd, Higson and Yuncken will be apparent in our choices of notation and presentation. 

After that, Sections \ref{sec-strat-eq}, \ref{sec-topology} and \ref{sec-localizations} can be read more or less independently of one another. Section~\ref{sec-strat-eq} is about the stratified equivalence property of the Mackey embedding; there we state the appropriate definitions (indicating the slight deviations from those of Baum--Nistor and Aubert--Baum--Plymen--Solleveld), define the filtrations $(\Ups_k)_{k \in \N}$ and $\left(I_n\right)_{n \in \N}$, and prove Theorem~\hyperref[th-a]{A}. Section~\ref{sec-topology} contains the proof of Theorem~\hyperref[th-b]{B}. While the result was suggested to us by the inspection of stratified equivalence for the filtrations $(\Ups_k)_{k \in \N}$, we give a purely representation-theoretic proof that does not refer to the notions of Section~\ref{sec-strat-eq}. Finally, in Section~\ref{sec-localizations} we discuss the infinitesimal character filtration of Bradd, Higson and Yuncken before proving a comparison theorem between the Mackey embedding $\alpha$ and the ``localized'' versions on the subquotients of the infinitesimal character filtration, and collecting various remarks on the (non-)uniqueness properties of the embedding $\alpha$.

\paragraph*{Acknowledgements.} This paper grew out of a discussion that took place on the 28th of February, 2025, during a workshop on tempered representations and $K$-theory at Institut Henri Poincaré. We thank all the colleagues who took part in that discussion, in particular Anne-Marie Aubert, Roger Plymen and Maarten Solleveld. We are also indebted to Jacob Bradd, Nigel Higson and Robert Yuncken for several enlightening conversations over the past year; Theorem~\hyperref[th-b]{B} was brought on by questions of Bradd and Yuncken. 

 We extend our gratitude to the Institut Henri Poincaré  (UAR 839 CNRS-Sorbonne Université) and its Centre Émile Borel, as well as the project OpART of the Agence Nationale de la Recherche (ANR-23-CE40-0016), for their help and support in relation with the thematic trimester ``Representation theory and noncommutative geometry'', and during the preparation of this paper. P. C. is particularly grateful to the Institut Élie Cartan de Lorraine and the Fondation Sciences Mathématiques de Paris for supporting him through visiting professorships.

\section{The tempered dual and the Mackey bijection}\label{sec-prelim}

\subsection{\texorpdfstring{Parameters for the tempered dual of~$G$}{Parameters for the tempered dual of G}}

Throughout the paper, by a \emph{real reductive group} we shall mean the group of real points of a connected reductive algebraic group defined over $\mathbb{R}$; and we shall fix throughout a real reductive group~$G$, together with a Cartan involution $\theta$ of~$G$. We denote by $K$ the set of fixed-points $G^\theta$. It is a  maximal compact subgroup of $G$ and we write $\mathfrak{g}=\mathfrak{k}\oplus\mathfrak{p}$ for the Cartan decomposition of the Lie algebra $\mathfrak{g}=\operatorname{Lie}(G)$, where $\mathfrak{k}$ is the Lie algebra of~$K$. We assume throughout that  the Lie algebra~$\mathfrak{g}$ is equipped with  a symmmetric, non-degenerate, $G$-invariant bilinear form that is positive-definite on $\mathfrak{p}$ and negative-definite on $\mathfrak{k}$. This gives positive-definite inner products on each of $\mathfrak{k}$ and $\mathfrak{p}$; we shall denote these inner products and their restrictions to various subspaces by $\left\langle\cdot\,,\,\cdot\right\rangle$, and the corresponding norms by $\lVert{\cdot}\rVert$, without any further reference. In particular, if $\mathfrak{a}_1 \subset \mathfrak{a}_2$ are nested subspaces of~$\mathfrak{p}$,  any linear functional $\nu$ on $\mathfrak{a}_1$ extends uniquely to a linear functional on~$\mathfrak{a}_2$ which is zero on the orthogonal complement of~$\mathfrak{a}_1$ in~$\mathfrak{a}_2$.  We shall call this ``the'' extension of~$\nu$ to~$\mathfrak{a}_2$ without further comment.

We also fix a maximal abelian subspace $\amin\subset \mathfrak{p}$ and a system of positive restricted roots \[\Delta^+(\mathfrak{g},\amin)\subset\Delta(\mathfrak{g},\amin),\] which will remain fixed throughout the paper.
They determine an Iwasawa decomposition \[G=K\Amin\Nmin\]
where $\Amin=\exp(\amin)$ and $\Nmin=\exp(\mathfrak{\nmin})$, with $\nmin$ defined to be the direct sum of root spaces $\bigoplus\limits_{\alpha\in\Delta^+(\mathfrak{g},\amin)}\mathfrak{g}_\alpha$. Finally, we denote by $\Mmin$ the centralizer of $\amin$ in $K$; then 
 \[\Pmin = \Mmin \Amin\Nmin\] is a minimal parabolic subgroup of~$G$.   Having fixed these elements of notation, let us now introduce  parameters that will allow us to describe the tempered representation theory of~$G$.

\subsubsection{Imaginary part of the infinitesimal character; tempiric representations}\label{sec-iminfch}

Let $S\subset\Delta^+(\mathfrak{g},\amin)$ denote the set of simple restricted roots. As in \cite{BHY}, subsets of $S$ will play a central role in the parametrization of the tempered dual of $G$ that we use below.

A \emph{parabolic subgroup} of $G$ is a closed subgroup containing a conjugate of $\Pmin$. Such a group~$P$ is called \emph{standard} if $\Pmin\subset P$, and it is well known that standard parabolic subgroups of~$G$ are in natural one-to-one correspondence with subsets of $S$, as follows. Any subset $I\subset S$ determines a unique standard parabolic subgroup $P_I$ with Langlands decomposition \[P_I=M_IA_IN_I\] where  $A_I = \exp(\mathfrak{a}_I)$ for $\mathfrak{a}_I=\bigcap_{\alpha\in I}\Ker \alpha$, where the Levi component $L_I=M_IA_I$ of $P_I$ is the centralizer in $G$ of $\mathfrak{a}_I$, where $M_I$ is the subgroup of $L_I$ generated by all compact subgroups of $L_I$, and where the unipotent radical~$N_I$ is $\exp(\mathfrak{n}_I)$ with $\mathfrak{n}_I=\bigoplus_{\alpha\in S\setminus I}\mathfrak{g}_\alpha$. See for instance \cite[Chapter VII, \S~7]{Knapp_Beyond} for further details.

We set \[\adomstar=\left\lbrace\, \nu\in\amin^*\::\:\scal{\nu}{\alpha}\geq0\,,\,\forall \alpha\in S\, \right\rbrace;\]
this is the dominant Weyl chamber in $\amin^*$ associated with the given choice of positive roots.

\begin{remark}
For any parabolic subgroup $P$ of $G$ with Langlands decomposition $P=MAN$ the dual of the Lie algebra $\mathfrak{a}$ of $A$ can be viewed as a subspace of $\amin^*$, by extending a linear functional on $\mathfrak{a}$ to a functional on $\amin^*$ that vanishes on the orthogonal complement $\amin\cap\mathfrak{m}$ of $\mathfrak{a}$ in $\amin$. This gives a map \begin{equation}\label{eq-inclusion-amin*}
\mathfrak{a}^*\longrightarrow\amin^*.
\end{equation}
When $\mathfrak{a}=\mathfrak{a}_I$ for some $I\subset S$, the image is $\mathfrak{a}_I^*=\left\lbrace \gamma\in\amin^*\::\:\scal{\alpha}{\gamma}=0 \,,\,\forall \alpha\in I
\right\rbrace$.
\end{remark}

Still following \cite{BHY} (see Section~2.3 there), for $I\subset S$, we  further define 
\[\aiplus=\left\lbrace\, \nu\in\adomstar\::\:\scal{\nu}{\alpha}=0\,,\,\forall \alpha\in I\quad\text{and}\quad\scal{\nu}{\beta}\neq0\,,\,\forall \beta\in S\setminus I\, \right\rbrace.\]
We shall need the following properties of the sets $\aiplus$. 
\begin{enumerate}[(a)]
\item For fixed $I \subset S$, the set $\aiplus$ is contained in $\mathfrak{a}_I^*$, and we have \[\aiplus=\adomstar\cap \mathfrak{a}_I^*\setminus\bigcup_{I\subsetneq J} \mathfrak{a}_J^*.\]
\item The set $\mathfrak{a}^*_\varnothing$ is the interior of $\adomstar$, whereas $\mathfrak{a}^*_S$ identifies with the intersection of $\amin$ with the center of $\mathfrak{g}$.
\item The sets $\aiplus$ are locally closed in $\adomstar$, and give a partition of $\adomstar$:
\begin{equation} \label{eq-partition-chambre}\adomstar=\bigsqcup_{I\subset S}\aiplus.\end{equation}
\end{enumerate}

Let us now turn to representations. If $\pi$ is an irreducible unitary representation of $G$, we shall denote the \emph{imaginary part of its infinitesimal character} by \[\Iminfch(\pi)\in\amin^*/W(\mathfrak{g},\amin).\] See \cite[Section~2.2]{BHY} for details regarding this notion. Although the basic ideas probably date back to Harish-Chandra, the present formulation draws on work of Vogan \cite{Vogan81, Vogan00}.

For our purposes it will be convenient to view the imaginary part of the infinitesimal character as an element of the chamber $\adomstar$, using the fact that the latter is a fundamental domain for the action of~$W$ on $\amin^\ast$: the inclusion of $\adomstar$ into $\amin^\ast$ induces a bijection

\begin{equation}\label{eq-chambre-dom} \adomstar \overset{\sim}{\longleftrightarrow}  \amin^\ast/W. \end{equation}

Therefore it makes sense to use the decomposition~\eqref{eq-partition-chambre} to partition the unitary dual of~$G$ according to which subset $\aiplus$ contains the imaginary part of the infinitesimal character. This is one of the main ingredients of \cite{BHY}. 

The following extreme case is crucial to discuss the Mackey bijection.

\begin{definition}\label{def-rep-tempirique}
An irreducible unitary representation $\pi$ of a real reductive group is said to have \emph{real infinitesimal character} if it satisfies $\Iminfch(\pi) = 0$.  It is said to be \emph{tempiric} if it is tempered, irreducible, and has real infinitesimal character.
We denote by $\Gtempiric$ the set of equivalence classes of tempiric representations of $G$. 

\end{definition}

In Section~\ref{sec-parametrisation-bhy} below we shall outline a description of the tempered dual $\hatGtemp$ in terms of tempiric representations of Levi subgroups.  This relies on the  following fact, again drawing on work of Vogan, which provides a classification of tempered irreducible representations in terms of the imaginary part of their infinitesimal character. In the statement we shall freely use notation related to parabolic induction; some of the basics will be recalled in Section~\ref{sec-parametrisation-bhy}. 

\begin{theorem}[See {\cite[Theorem 2.4.2]{BHY}}]\label{th-strat-par-ImInfChar}
For fixed $I\subset S$ and $\nu\in\aiplus$, the map \[\sigma\longmapsto\Ind_{P_I}^G\sigma\otimes e^{i\nu}\otimes 1\] determines a bijection between the set $\hatMItempiric$ of unitary equivalence classes of tempiric representations $\sigma$ of~$M_I$, on the one hand, and the set of unitary equivalence classes of irreducible tempered representations $\pi$ of~$G$ for which $\Iminfch(\pi)=\nu$, on the other hand.
\end{theorem}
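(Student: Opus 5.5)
The plan is to reduce the statement to three standard ingredients: the Langlands/Knapp--Zuckerman description of the tempered dual by induction from discrete series (or limits of discrete series) data; Harish-Chandra's irreducibility criterion for unitary induction at generic $A$-parameters; and the computation of the infinitesimal character of an induced representation.

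\textbf{Step 1: the map is well defined, and lands in the right fibre.} Take $\sigma\in\hatMItempiric$ and $\nu\in\aiplus$. The representation $\Ind_{P_I}^G\sigma\otimes e^{i\nu}\otimes 1$ is unitary and tempered, because $\sigma$ is tempered and $\nu$ is imaginary. Its infinitesimal character is that of $\sigma\otimes e^{i\nu}$, transported to $G$. Since $\sigma$ has real infinitesimal character, the imaginary part is exactly $\nu$ modulo $W$, and under the identification~\eqref{eq-chambre-dom} this is $\nu$ itself.

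\textbf{Step 2: irreducibility.} Because $\nu$ is regular with respect to the roots outside $I$, I will use Harish-Chandra's commuting algebra theorem in its Knapp--Stein form. The commuting algebra of $\Ind_{P_I}^G\sigma\otimes e^{i\nu}$ is spanned by intertwining operators indexed by $w\in W(G,A_I)$ with $w(\sigma,\nu)\cong(\sigma,\nu)$. Any such $w$ fixes $\nu\in\aiplus$. By the regularity condition $\langle\nu,\beta\rangle\ne0$ for $\beta\in S\setminus I$, the stabiliser of $\nu$ in $W(\mathfrak g,\amin)$ is $W_I$, whose image in $W(G,A_I)$ is trivial. So the $R$-group is trivial and the induced representation is irreducible. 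I expect this to be the main technical point. One has to check carefully that elements of $W(G,A_I)=N_K(\mathfrak a_I)/Z_K(\mathfrak a_I)$ fixing a point of $\aiplus$ are trivial. I would argue this via Chevalley's lemma: the stabiliser of $\nu$ in $W(\mathfrak g,\amin)$ is generated by reflections in roots orthogonal to $\nu$, hence by $W_I$, and $W_I$ acts trivially on $\mathfrak a_I$.

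\textbf{Step 3: injectivity and surjectivity.} For injectivity, by the Langlands disjointness theorem for tempered representations, two induced representations from $P_I$ with data $(\sigma,\nu)$ and $(\sigma',\nu)$ are equivalent only if the data are $W(G,A_I)$-conjugate. Step 2 shows the conjugating element is trivial, so $\sigma\cong\sigma'$.

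For surjectivity, let $\pi$ be irreducible tempered with $\Iminfch(\pi)=\nu$. Write $\pi$ as a constituent of $\Ind_{P}^G\delta\otimes e^{i\mu}$ with $P=MAN$ standard, $\delta$ a discrete series or limit of discrete series of $M$, and $\mu\in\mathfrak a^*$, conjugated so that $\mu$ is dominant. Then $\Iminfch(\pi)=\mu$, so $\mu=\nu$ and hence $\mathfrak a\subset\mathfrak a_I$, that is $P\subset P_I$. Now use induction in stages:
\[
\Ind_P^G\delta\otimes e^{i\nu}=\Ind_{P_I}^G\bigl(\Ind_{P\cap M_I}^{M_I}\delta\otimes e^{i\nu|}\bigr)\otimes e^{i\nu}.
\]
Here the restriction of $\nu$ to $\mathfrak a\cap\mathfrak m_I$ is zero, since $\nu\in\mathfrak a_I^*$. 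So the inner induced representation has real infinitesimal character and is a tempered, possibly reducible, representation of $M_I$. Its irreducible constituents $\sigma$ are therefore tempiric. By Step 2 each $\Ind_{P_I}^G\sigma\otimes e^{i\nu}$ is irreducible, so $\pi$ must be one of them. This completes the bijection.
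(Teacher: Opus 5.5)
The paper gives no proof of this statement. It quotes it from \cite{BHY} (Appendix~2 there), where it is presented as a special case of a more general theorem of Vogan. Your route through Harish-Chandra's intertwining theory is a reasonable alternative in principle. But Step~2 has a genuine gap, and the injectivity half of Step~3 inherits it.

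Harish-Chandra's commuting algebra theorem, and the Knapp--Stein $R$-group built on it, describe $\operatorname{End}_G\Ind_P^G(\delta\otimes e^{i\nu})$ when $\delta$ is a discrete series representation of $M$. A tempiric $\sigma$ of $M_I$ is usually not of this type. Your surjectivity argument applies Step~2 precisely to constituents of some $\Ind^{M_I}\delta$.

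For such $\sigma$, the claim that the commuting algebra is spanned by operators indexed by the stabilizer of $(\sigma,\nu)$ in $W(G,A_I)$ is false. Take $G=\mathrm{SL}(4,\mathbb{R})$ and $P_I$ the $(3,1)$ parabolic, so that $W(G,A_I)$ is trivial and $M_I\simeq\mathrm{SL}^{\pm}(3,\mathbb{R})$. Let $\chi$ be the character $\operatorname{diag}(m_1,\dots,m_4)\mapsto m_3m_4$ of $\Mmin$, and put $\sigma=\Ind_{\Pmin\cap M_I}^{M_I}(\chi\otimes 1)$. This $\sigma$ is tempiric, and it is irreducible because it extends to a unitary principal series of $\mathrm{GL}(3,\mathbb{R})$. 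Yet $\Ind_{P_I}^G(\sigma\otimes 1)=\Ind_{\Pmin}^G(\chi\otimes 1)$ is the restriction to $\mathrm{SL}(4,\mathbb{R})$ of the irreducible unitary principal series of $\mathrm{GL}(4,\mathbb{R})$ with signs $(1,1,\mathrm{sgn},\mathrm{sgn})$. That representation is isomorphic to its twist by $\mathrm{sgn}\circ\det$, so by Clifford theory for the index-two subgroup $\mathbb{R}_{>0}\times\mathrm{SL}(4,\mathbb{R})$ the restriction has two summands.

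Here $\nu=0\notin\aiplus$, so this does not contradict the theorem. It does show that triviality of the $W(G,A_I)$-stabilizer cannot by itself imply irreducibility, and that triviality is the only consequence of $\nu\in\aiplus$ you use. Likewise, Langlands' disjointness theorem concerns Langlands quotients with strictly positive real parameter, so it does not give your injectivity claim.

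The missing idea is to use the regularity of $\nu$ at the level of discrete series data rather than at the level of $\sigma$.
\begin{itemize}
\item Since $\sigma$ has real infinitesimal character, it is a summand of $\Ind_{P'}^{M_I}(\delta\otimes 1)$ for a cuspidal parabolic $P'=M'A'N'$ of $M_I$ and a discrete series $\delta$ of $M'$. Hence $\Ind_{P_I}^G(\sigma\otimes e^{i\nu})$ is a summand of $\Ind_{P''}^G(\delta\otimes e^{i\nu})$, where $A''=A'A_I$ and $P''=M'A''N'N_I$.
\item Your Chevalley argument, carried out in $K$, shows that the centralizer in $K$ of the vector $H_\nu\in\mathfrak{a}_I$ dual to $\nu$ is $K_I=K\cap M_I$. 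So every element of $W(G,A'')$ fixing $\nu$ comes from $W(M_I,A')$.
\item By Harish-Chandra's commuting algebra theorem, together with compatibility of intertwining operators with induction in stages, $\operatorname{End}_G\Ind_{P''}^G(\delta\otimes e^{i\nu})$ is the image of $\operatorname{End}_{M_I}\Ind_{P'}^{M_I}\delta$. This forces each $\Ind_{P_I}^G(\sigma\otimes e^{i\nu})$ to be irreducible, and forces distinct summands $\sigma$ to give inequivalent representations.
\item For data coming from different discrete series, use Harish-Chandra's disjointness theorem: equivalent induced representations come from $K$-conjugate data $(M'A'',\delta,\nu)$. The conjugating element fixes $H_\nu$, so it lies in $K_I$, and you are reduced to the previous point.
\end{itemize}

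There are also two small slips in Step~3. First, $\mu=\nu$ gives $\mathfrak{a}_I\subset\mathfrak{a}$, not $\mathfrak{a}\subset\mathfrak{a}_I$; the former is what $P\subset P_I$ means. Second, making $P$ standard and $\mu$ dominant at the same time requires a further conjugation by $W_I$, which fixes $\nu$.
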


See \cite[Appendix 2]{BHY} for a sketch of proof of this result, due to Vogan in a greater generality, and for further comments.

\subsubsection{\texorpdfstring{Lowest $K$-types}{Lowest K-types}}\label{sec-basics_LKT}

Alongside the imaginary part of the infinitesimal character, another way to sort unitary representations of $G$ is to study their $K$-types. Let us summarize here the elements of Vogan's theory of lowest $K$-types that we shall use later. See \cite{Vogan85}, and e.g. \cite[Section 6.1]{BHY} or \cite[Section 4.2]{CHR}, for additional details and references.

\begin{definition} 
\label{def-norme-sur-K^}
Let $K$ be a compact Lie group and let $T$ be a  maximal torus in the identity component of $K$.  Fix a system of positive roots for $(\mathfrak{k}, \mathfrak{t})$, and let $\rho_K$ be the half-sum of positive roots.  Fix a $K$-invariant inner product on $\mathfrak{k}$.  If $\lambda$ is an irreducible representation of $K$, then we define its \emph{height} by
\[\Knorm{\lambda} = \| \mu {+} 2 \rho_K\|,\]
where $\mu$ is any highest weight of $\lambda$. 
\end{definition}

\begin{definition}
Let $\pi$ be an admissible representation of $G$. A \emph{lowest K-type} of $\pi$ is any $K$-type whose norm is minimal among the norms of all $K$-types of $\pi$.
\end{definition}

Norm-bounded subsets of $\Khat$ are finite. It follows that every admissible representation has at least one lowest $K$-type, and at most finitely many. While the height function $\Knorm{\cdot}$ in Definition~\ref{def-norme-sur-K^} depends on the chosen inner product and on the chosen set of positive roots, the 
lowest $K$-types of~$\pi$ depend only on $\pi$, not on the other choices.

\begin{notation}
Given an admissible representation $\pi$ of $G$, we denote its set of lowest $K$-types by \[\LKT_G(\pi)=\left\lbrace\, \lambda\in\Khat\::\:\text{$\lambda$ is a lowest $K$-type of $\pi$}\, \right\rbrace\] or simply $\LKT(\pi)$ when no ambiguity is likely to arise from doing so. Furthermore, since all elements in $\LKT(\pi)$ have the same height, we shall write \[\Knorm{\LKT(\pi)} \]
for the common height $\Knorm{\lambda}$ of all elements $\lambda\in\LKT(\pi)$.
\end{notation}

\begin{theorem}[Vogan \cite{Vogan81}; see \cite{Vogan07}, Theorem 1.2]\label{th-vogan-tempiric}~
\begin{enumerate}[(a)]
\item If $\pi$ is a tempiric representation of~$G$, then $\LKT(\pi)$ contains a single element of $\Khat$.
\item The assignement $\pi \longmapsto \LKT_G(\pi)$ induces a bijection 
\[\Gtempiric\overset{\simeq}{\longrightarrow}\Khat.\]
\end{enumerate}
\end{theorem}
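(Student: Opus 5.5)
The plan is to deduce both parts from Vogan's classification of irreducible tempered representations by lowest $K$-types, combined with the characterization of real infinitesimal character in terms of the continuous parameter. I take as known Vogan's theorem that every irreducible tempered representation $\pi$ of $G$ is a constituent of a representation $\Ind_{P}^G(\delta\otimes e^{i\nu}\otimes 1)$, where $P=MAN$ is a cuspidal parabolic subgroup, $\delta$ is a discrete series or limit of discrete series of $M$, and $\nu\in\mathfrak{a}^*$ is real. I also take as known that the lowest $K$-types of such constituents are controlled by a ``$\lambda$-parameter''. The key input is Vogan's theorem in \cite{Vogan81}: for each $\mu\in\Khat$ there is a $\lambda$-parameter $\lambda(\mu)$ and a data set $(P_\mu,\delta_\mu)$ such that the irreducible tempered representations containing $\mu$ as a lowest $K$-type are exactly the constituents of $\Ind_{P_\mu}^G(\delta_\mu\otimes e^{i\nu}\otimes1)$ containing $\mu$, as $\nu$ ranges over $\mathfrak{a}_\mu^*$. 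Moreover, for each $\nu$ there is exactly one such constituent.

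For part (a), let $\pi$ be tempiric. Write $\pi$ as a constituent of $\Ind_P^G(\delta\otimes e^{i\nu}\otimes 1)$. The infinitesimal character of $\pi$ is then that of the induced representation, namely (up to $W$) the Harish-Chandra parameter of $\delta$ plus $i\nu$. The Harish-Chandra parameter of $\delta$ is purely imaginary on the compact part $\mathfrak{t}$ and contributes nothing to the $\amin^*$-component, so $\Iminfch(\pi)=0$ forces $\nu=0$ in $\amin^*/W$. So $\pi$ is a constituent of $\Ind_P^G(\delta\otimes 1\otimes1)$. By Vogan's theory, each lowest $K$-type of the induced representation occurs with multiplicity one and lies in a distinct irreducible constituent. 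It remains to see that a tempiric $\pi$ cannot have two lowest $K$-types. Here I would invoke the fact that the lowest $K$-types of a constituent of a tempered induced representation with $\nu=0$ correspond to characters of the $R$-group, and that all lowest $K$-types of a single constituent share the same $\lambda$-parameter. Vogan shows these are permuted simply transitively by a group $F$ of order $2^r$ that is tied to the real roots. Distinct $F$-orbits give distinct constituents only when $\nu$ is generic. At $\nu=0$ the reducibility is maximal, and each constituent retains exactly one lowest $K$-type (\cite[Theorem 1.2]{Vogan07}). This step is the main obstacle: it requires the precise statement that at $\nu=0$ the Knapp--Stein $R$-group acts on the set of lowest $K$-types simply transitively, so that the number of constituents equals the number of lowest $K$-types. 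I would cite Vogan's analysis of the $R$-group via lowest $K$-types for this.

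For part (b), injectivity follows from the uniqueness statement above: if $\pi,\pi'$ are tempiric with the same lowest $K$-type $\mu$, both are the unique constituent containing $\mu$ of $\Ind_{P_\mu}^G(\delta_\mu\otimes e^{i\nu}\otimes 1)$ with $\nu=0$, so $\pi\cong\pi'$. For surjectivity, given $\mu\in\Khat$, take Vogan's data $(P_\mu,\delta_\mu)$ and the constituent of $\Ind_{P_\mu}^G(\delta_\mu\otimes 1\otimes 1)$ containing $\mu$. It is tempered and irreducible, and by the computation in part (a) it has $\Iminfch=0$, so it is tempiric. Its lowest $K$-type is $\mu$, because $\mu$ is a lowest $K$-type of the whole induced representation.
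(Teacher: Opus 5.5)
The paper gives no proof of this statement: it quotes it from Vogan \cite{Vogan81}, \cite[Theorem~1.2]{Vogan07}. Your deduction of (b) from (a) plus Vogan's lowest-$K$-type classification is sound. For injectivity, a tempiric $\pi$ with $\mu\in\LKT(\pi)$ must be the constituent containing $\mu$ of $\Ind_{P_\mu}^G(\delta_\mu\otimes e^{i\nu}\otimes 1)$, and $\Iminfch(\pi)=0$ forces $\nu=0$. For surjectivity, take that constituent at $\nu=0$ and apply (a).

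The gap is in (a), which carries all the content. Your decisive claim is that, at $\nu=0$, two distinct lowest $K$-types of the induced representation never lie in the same irreducible constituent. That is just (a) restated. You settle it by citing \cite[Theorem~1.2]{Vogan07}, which is the theorem you are trying to prove. So as an argument, (a) is circular. If citing is allowed, the citation alone does everything, which is exactly what the paper does, and the rest of your scaffolding adds nothing.

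The $R$-group sketch does not fill the hole either:
\begin{itemize}
\item If $F$ acts simply transitively on the lowest $K$-types, there is only one $F$-orbit. So ``distinct $F$-orbits give distinct constituents'' has no content. Moreover, at generic $\nu$ all lowest $K$-types lie in a single irreducible constituent.
\item A count ``number of constituents $=$ number of lowest $K$-types'' would not by itself give one lowest $K$-type per constituent. You would also need that every constituent of $\Ind_P^G(\delta\otimes 1\otimes 1)$ contains a lowest $K$-type of the induced representation.
\item Equivalently, and more directly, you need that at $\nu=0$ the commuting algebra (normalized Knapp--Stein operators) acts by \emph{distinct} scalars on the distinct, multiplicity-one, lowest $K$-types.
\end{itemize}
That separation statement is the real missing ingredient. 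It comes from Vogan's lowest-$K$-type analysis of the intertwining operators, not formally from Knapp--Stein theory.

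A smaller point: for an arbitrary cuspidal datum $(P,\delta)$, a lowest $K$-type of the constituent $\pi$ need not be a lowest $K$-type of the whole induced representation. You should take $(P,\delta)=(P_\mu,\delta_\mu)$ with $\mu\in\LKT(\pi)$. Then every element of $\LKT(\pi)$ has the same height as $\mu$ and occurs in the induced representation, so it is also lowest there.
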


\subsubsection{Parametrization of the tempered dual}\label{sec-parametrisation-bhy}

If $P$ is a parabolic subgroup of $G$ with Langlands decomposition $P=MAN$, and if we are given a tempered representation $\sigma$ of $M_I$ and an element $\nu$ of $\mathfrak{a}^+$, then the pair $(\sigma,\nu)$ determines a tempered representation $\sigma\otimes e^{i\nu}$ of the Levi component $L=MA$, which can be extended as $\sigma\otimes e^{i\nu}\otimes 1_N$ to $P$ and unitarily induced to $G$. The representation $\Ind_P^G\sigma\otimes e^{i\nu}\otimes 1_N$ obtained in this way is said to be \emph{parabolically induced} from $(\sigma,\nu)$.

These representations can be realized in various Hilbert spaces -- see for instance \cite{vandenBan_Induced_reps}. We shall use the ``compact picture'', which has the advantage that the carrier Hilbert space is independent of the parameter $\nu$. It is obtained as the set of $K \cap L$-equivariant vectors in the space of square-integrable, $H_\sigma$-valued functions on $K$. The action on~$G$ on this space is defined by means of the Iwasawa decomposition, but we shall not need the precise formula. For brevity's sake, as in \cite[Section 3]{CHR} we shall denote this space by \begin{equation}\label{eq-def-Htau}
\Ind H_\sigma =  \operatorname{L^2} (K, H_\pi)^{K \cap L}.
\end{equation}

In order to describe the tempered dual of $G$ in a language suited to our purposes, it will be useful to consider parabolically induced representations labeled by subsets of roots, as in Section \ref{sec-iminfch}.

\begin{notation}
If $I \subset S$ is a subset of simple roots, $\tau$ is a tempiric representation of $M_I$, and $\nu \in \aiplus$, we shall write \[\pi_{I, \tau, \nu}=\Ind_{P_I}^G\tau\otimes e^{i\nu}\otimes 1_{N_I}.\]
\end{notation}

\begin{remark}
If $I$ and $\tau$ are fixed then the representations $\pi_{I, \tau, \nu}$, as $\nu$ ranges over $\aiplus$, all have the same restriction to~$K$.  In particular, the set $\LKT(\pi_{I, \tau, \nu})$ does not depend on $\nu$. 
\end{remark}

The following statement amounts to a classification of the irreducible tempered representations of~$G$:
\begin{theorem}\label{th-bhy-classification}
\begin{enumerate}[(a)]
\item For every triple $(I, \tau, \nu)$ where  $I \subset S$, where $\tau$ is a tempiric representation of~$M_I$, and where $\nu \in \aiplus$, the representation $\pi_{I, \tau, \nu}$ of~$G$ is irreducible. 
\item Every irreducible tempered representation of~$G$ is equivalent with one of the representations~$\pi_{I, \tau, \nu}$.
\item Given triples $(I, \tau, \nu)$ and $(I', \tau', \nu')$ where  $I, I'\subset S$, where $\tau$, $\tau'$ are tempiric representations of $M_I$ and $M_{I'}$ respectively, and  where $\nu \in \aiplus$, $\nu' \in \mathfrak{a}^\ast_{I, +}$, we have  $\pi_{I, \tau, \nu} \simeq \pi_{I', \tau', \nu'}$ if and only if $I=I'$, $\tau \simeq \tau'$ and $\nu=\nu'$.
\end{enumerate}
\end{theorem}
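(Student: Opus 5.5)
The plan is to derive all three parts of Theorem~\ref{th-bhy-classification} from Theorem~\ref{th-strat-par-ImInfChar}, using the partition~\eqref{eq-partition-chambre} of the dominant chamber. The key point is that the triple $(I,\tau,\nu)$ can be read off from $\pi_{I,\tau,\nu}$ through its imaginary infinitesimal character.

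The first step is to compute $\Iminfch(\pi_{I,\tau,\nu})$. Since $\tau$ is tempiric, $\Iminfch(\tau)=0$. Parabolic induction adds the $\mathfrak{a}_I$-parameter to the infinitesimal character of the inducing representation. Hence the imaginary part of the infinitesimal character of the induced representation is the $W$-orbit of $\nu$, viewed in $\amin^*$ via \eqref{eq-inclusion-amin*}. Because $\nu\in\aiplus\subset\adomstar$, under the identification~\eqref{eq-chambre-dom} we get $\Iminfch(\pi_{I,\tau,\nu})=\nu$. This computation is in fact implicit in the statement of Theorem~\ref{th-strat-par-ImInfChar}: that theorem says the map $\sigma\mapsto\Ind_{P_I}^G\sigma\otimes e^{i\nu}\otimes 1$ lands in the set of irreducible tempered $\pi$ with $\Iminfch(\pi)=\nu$. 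I would invoke it directly for that reason rather than recompute.

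With this in hand, parts (a) and (b) are straightforward. For (a), the map of Theorem~\ref{th-strat-par-ImInfChar} takes values in irreducible tempered representations, so each $\pi_{I,\tau,\nu}$ is irreducible. For (b), let $\pi$ be irreducible tempered and set $\nu=\Iminfch(\pi)\in\adomstar$. By \eqref{eq-partition-chambre} there is a unique $I\subset S$ with $\nu\in\aiplus$. Surjectivity in Theorem~\ref{th-strat-par-ImInfChar} then gives a tempiric $\tau$ of $M_I$ with $\pi\simeq\pi_{I,\tau,\nu}$.

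For (c), suppose $\pi_{I,\tau,\nu}\simeq\pi_{I',\tau',\nu'}$. Equivalent representations have the same $\Iminfch$, so $\nu=\nu'$ as elements of $\adomstar$, using that $\adomstar$ is a fundamental domain. The sets $\aiplus$ are pairwise disjoint, and $\nu\in\aiplus$ while $\nu'\in\mathfrak{a}^*_{I',+}$, so $I=I'$. With $I$ and $\nu$ now fixed, injectivity in Theorem~\ref{th-strat-par-ImInfChar} gives $\tau\simeq\tau'$. The converse direction is trivial.

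The main obstacle is the bookkeeping in the first step, which the argument then uses throughout. One must check that the infinitesimal-character parameter of $\pi_{I,\tau,\nu}$, a $W(\mathfrak{g},\amin)$-orbit, has dominant representative exactly $\nu$. This is not affected by the $W$-action because $\nu$ already lies in $\adomstar$. Modulo that point, which Theorem~\ref{th-strat-par-ImInfChar} packages, the proof is formal.
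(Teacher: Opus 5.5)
Your proof is correct. It spells out the dependence the paper points to when it says its description of $\hatGtemp$ ``relies on'' Theorem~\ref{th-strat-par-ImInfChar}: the classification is the union over $I\subset S$ and $\nu\in\aiplus$ of those bijections, glued together by the disjoint partition~\eqref{eq-partition-chambre}, with $\Iminfch(\pi_{I,\tau,\nu})=\nu$ read off from the target of that theorem. You also correctly read the hypothesis on $\nu'$ as $\nu'\in\mathfrak{a}^\ast_{I',+}$, which fixes a typo in the statement. The paper itself writes no proof and instead cites \cite[proof of Theorem~3.5]{Mackey_bijection} for the literature behind each assertion, so all the substance of your argument rests on the cited Theorem~\ref{th-strat-par-ImInfChar}.
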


The three assertions in the theorem encapsulate everything that was used in \cite{Mackey_bijection} to prove that the construction there provides a bijection between $\hatGtemp$ and the unitary dual of~$G_0$ ; see the discussion in \cite[proof of Theorem~3.5]{Mackey_bijection} for reformulations of the relevant parts of the literature concerning each assertion.  
The version of the classification formulated in Theorem~\ref{th-bhy-classification} is implicitly used throughout \cite{BHY}. 

\subsection{\texorpdfstring{The unitary dual of~$G_0$ and the Mackey bijection}{The unitary dual of G0 and the Mackey bijection}}\label{sec-params-G0}

\subsubsection{Unitary dual of the Cartan motion group}

\paragraph{Construction of unitary representations of $G_0$.} 
In the rest of the paper, we define the Cartan motion group $G_0$ for~$G$ (and~$K$) to be the semidirect product $K \ltimes \mathfrak{p}$ attached to the adjoint action of~$K$ on~$\mathfrak{p}$. Let us recall Mackey's recipe for constructing unitary representations of~$G_0$, and his classification of irreducible unitary representations. 

The main tool is the coadjoint action of~$K$ on the dual vector space $\mathfrak{p}^\ast$. Suppose given a pair $(\chi, \mu)$ where $\chi$ is an element of $\mathfrak{p}^\ast$ and $\mu$ is a unitary representation of the stabilizer~$K_\chi$, acting on a vector space $H_\mu$. To such data we attach the unitary representation  \[\varrho_{\chi, \mu}=\Ind_{K_\chi \ltimes \mathfrak{p}}^{G_0}(\mu \otimes \chi)\] of~$G_0$, realized in its compact picture, namely the  closed subspace of $\operatorname{L^2}(K, H_\mu)$ defined by
\[ \Ind H_\mu=\left\{\ f \in \operatorname{L^2}(K, H_\mu) \ : \ f(km)=\mu(m)k \ \text{ for all  $m \in K_\chi$ and $k \in K$}\ \right\},\] with the action of $G_0$ defined by $\varrho_{\chi, \mu}(k, v) f = u \longmapsto e^{i \langle k \cdot \chi,  u\rangle} f(k^{-1} u)$.

\paragraph{The spectral extended quotient $\mathfrak{p}^\ast\doublequo K$ and its topology.} 
Let $\mathscr{D}$ be the set of all pairs $(\chi, \mu)$ where $\chi$ is an element of $\mathfrak{p}^\ast$ and $\mu$ is an \emph{irreducible} unitary representation of the stabilizer $K_\chi$. 

Define an equivalence relation on $\mathscr{D}$ by: $(\chi, \mu) \sim (\chi', \mu')$ if and only if there exists $k \in K$ such that, first, we have $\chi' = \chi \circ \operatorname{ad}(k)$, and second, the representation $\mu'$ of $K_{\chi'}$ on~$H_{\mu'}$ is equivalent to the representation $u \longmapsto \mu(k^{-1}uk)$ of $K_{\chi'}$ on~$H_\mu$. 

Let $\mathfrak{p}^\ast \doublequo K$ be the quotient $\mathscr{D}/\!\!\sim$: it is the \emph{spectral extended quotient} of Baum and Connes, used by Aubert, Baum, Plymen and Solleveld under various names (see e.g. \cite[\S~11]{ABPS_Takagi}). 

 For our purposes, it is important to recall that every $K$-orbit in $\mathfrak{p}$ meets $\amin$, and that the intersection is a single $W$-orbit; see \cite[Lemma 7.22 and Proposition 7.29]{Knapp_Beyond}. Since $\amin^\ast/W$ identifies with $\adomstar$ as in~\eqref{eq-chambre-dom}, it follows that every $K$-orbit in $\mathfrak{p}^\ast$  meets $\adomstar$ in a unique point. If  $\mathscr{D}_{\adomstar}$  denotes the subset of $\mathscr{D}$ consisting of all pairs $(\chi, \mu)$ with $\chi \in \adomstar$, we deduce that the inclusion $\mathscr{D}_{\adomstar} \hookrightarrow \mathscr{D}$  induces a bijection between $\mathscr{D}_{\adomstar}$ and $\mathfrak{p}^\ast \doublequo K$.
 
\smallskip

We shall use this to define a topology on $\mathfrak{p}^\ast \doublequo K$, building on the following observation.

\begin{lemma}\label{lem-taille-stab} Let $\chi$ be an element of $\amin^\ast$. For every $\chi' \in \amin^\ast$ that is close enough to~$\chi$, the stabilizer~$K_{\chi'}$ is contained in $K_\chi$. 
\end{lemma}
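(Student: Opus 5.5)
The approach is to describe $K_\chi$ explicitly for $\chi \in \amin^\ast$ in terms of restricted roots, and then use the fact that only finitely many restricted roots occur. Identify $\mathfrak{p}^\ast$ with $\mathfrak{p}$ through the fixed inner product, so that $\amin^\ast$ corresponds to $\amin$. Write $H_\chi \in \amin$ for the element dual to $\chi$. Then $K_\chi$ is the centralizer $Z_K(H_\chi)$ of $H_\chi$ under the adjoint action.

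The key step is the following description: for $H \in \amin$, the group $Z_K(H)$ depends only on the set of restricted roots vanishing on~$H$,
\[ \Delta_H = \left\lbrace\, \alpha \in \Delta(\mathfrak{g},\amin) \::\: \alpha(H) = 0 \,\right\rbrace. \]
More precisely, I will show that $Z_K(H)$ equals the intersection of $K$ with the centralizer in $G$ of the subspace
\[ \mathfrak{a}_H = \bigcap_{\alpha\in\Delta_H}\Ker \alpha . \]
The inclusion $Z_K(\mathfrak{a}_H) \subset Z_K(H)$ is clear, since $H \in \mathfrak{a}_H$. For the reverse inclusion, take $k \in Z_K(H)$. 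Then $\operatorname{Ad}(k)$ fixes $H$ and commutes with $\operatorname{ad}(H)$, so it preserves the eigenspace decomposition of $\mathfrak{g}$ under $\operatorname{ad}(H)$. In particular it preserves the zero eigenspace $\mathfrak{z}_{\mathfrak{g}}(H)$, and hence its center. The center of $\mathfrak{z}_{\mathfrak{g}}(H)$, intersected with $\mathfrak{p}$ and with $\amin$, is exactly $\mathfrak{a}_H$: this holds because $\mathfrak{z}_{\mathfrak{g}}(H) = \mathfrak{m}_{\min} \oplus \amin \oplus \bigoplus_{\alpha\in\Delta_H}\mathfrak{g}_\alpha$.

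This alone gives only that $\operatorname{Ad}(k)$ preserves $\mathfrak{a}_H$, not that it fixes it pointwise. The cleaner route, and the one I intend to follow, is the standard fact that the centralizer of an element of $\amin$ in $K$ equals the centralizer of the whole "facet" containing it. I expect this step to be the main obstacle. I will argue it as follows. $\operatorname{Ad}(k)$ commutes with $\operatorname{ad}(H)$, whose eigenvalues on $\mathfrak{g}$ are real. Since $k \in K$ and $\operatorname{Ad}(k)$ preserves the Cartan decomposition, $\operatorname{Ad}(k)$ restricted to $\mathfrak{p}$ commutes with $\operatorname{ad}(H)^2|_{\mathfrak{p}}$. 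Alternatively, I can use the polar/Iwasawa structure: $Z_K(H)$ is generated by $Z_K(H)\cap N_K(\amin)$ together with its identity component. The Lie algebra of the identity component is $\mathfrak{z}_{\mathfrak{k}}(H) = \mathfrak{m}_{\min} \oplus \bigoplus_{\alpha\in\Delta_H^+}(\mathfrak{g}_\alpha+\mathfrak{g}_{-\alpha})\cap\mathfrak{k}$, which visibly depends only on $\Delta_H$. The component group is represented by Weyl group elements $w$ fixing $H$. By Chevalley's lemma, such $w$ lie in the subgroup generated by reflections in roots of $\Delta_H$, and these fix $\mathfrak{a}_H$ pointwise. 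Together these give $Z_K(H) = Z_K(\mathfrak{a}_H)$.

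The conclusion then follows from finiteness. Because $\Delta(\mathfrak{g},\amin)$ is finite, the set
\[ U = \left\lbrace\, \chi' \in \amin^\ast \::\: \langle \chi',\alpha\rangle \neq 0 \text{ for all } \alpha \notin \Delta_{H_\chi} \,\right\rbrace \]
is an open neighbourhood of $\chi$. For $\chi' \in U$ we have $\Delta_{H_{\chi'}} \subset \Delta_{H_\chi}$, and therefore $\mathfrak{a}_{H_\chi} \subset \mathfrak{a}_{H_{\chi'}}$. Consequently
\[ K_{\chi'} = Z_K(\mathfrak{a}_{H_{\chi'}}) \subset Z_K(\mathfrak{a}_{H_\chi}) = K_\chi , \]
which is the statement of the lemma.
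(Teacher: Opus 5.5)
Your proof is correct and follows the same route as the paper. In both arguments the stabilizer $K_\chi$ depends, monotonically, only on the set of restricted roots orthogonal to $\chi$; since there are finitely many roots, this set can only shrink when $\chi$ is perturbed slightly. The difference is that the paper cites earlier work for a description of the centralizer $L_\chi$ in $G$ in terms of root subgroups and then intersects with $K$, whereas you prove the corresponding statement $Z_K(H)=Z_K(\mathfrak{a}_H)$ yourself: conjugacy of maximal abelian subspaces of $\mathfrak{z}_{\mathfrak{p}}(H)$ under $Z_K(H)_0$ reduces it to $N_K(\amin)$, and Chevalley's lemma then handles the Weyl group part, so $\Mmin$ and the component group of $K_\chi$ are treated explicitly.
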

\begin{proof}
The centralizers $L_\chi$ and $L_\chi'$ of $\chi,\chi'$ in~$G$ are generated by $\Amin$ and by the root subgroups for those roots of $(\mathfrak{g}, \amin)$ that are orthogonal to $\chi$ and $\chi'$ respectively: see \cite[proof of Theorem~3.5]{Mackey_bijection}. If $\chi'$ is close enough to $\chi$, then a root that is not orthogonal to $\chi$ is not orthogonal to  $\chi'$, either; therefore $L_{\chi'} \subset L_\chi$, and $K_{\chi'} = L_{\chi'} \cap K$ is contained in $K_{\chi}=L_\chi \cap K$. 
\end{proof}
As an immediate consequence of the Lemma, for every $(\chi, \mu) \in \mathscr{D}_{\adomstar}$, there exists $\varepsilon_{\chi, \mu}>0$ with the property that if $\chi' \in \adomstar $ satisfies $\lVert \chi-\chi'\rVert < \varepsilon_{\chi, \mu}$ then the stabilizer $K_{\chi'}$ contains~$K_\chi$. We equip $\mathscr{D}_{\adomstar}$ with the topology which is generated, in the sense explained e.g. in~\cite[Chapter~I, \S~2, n°3, Exemple~II]{TG1}, by the sets 
\[ \mathscr{O}_{\chi, \mu}^{\varepsilon} = \left\{\ (\chi', \mu) \in \mathscr{D}_{\adomstar} \ : \ \lVert \chi-\chi'\rVert < \varepsilon \text{ and } \operatorname{Hom}_{K_{\chi'}}(\mu', \mu) \neq 0\ \right\}\]
for $0 < \varepsilon < \varepsilon_{\chi, \mu}$.
Transferring this topology using the bijection $\mathscr{D}_{\adomstar} \leftrightarrow \mathfrak{p}^\ast \doublequo K$ above, we obtain a topology on the spectral extended quotient $\mathfrak{p}^\ast \doublequo K$. 

\paragraph*{Classification of the unitary irreducible representations.} The following statement identifies the unitary dual of $G_0$, equipped with the Fell topology, and the spectral extended quotient $\mathfrak{p}^\ast\doublequo K$, equipped with the above topology.

\begin{theorem}[Unitary dual of $G_0$]\label{th-mackey-G0}~
\begin{enumerate}[(a)]
    \item For every $(\chi, \mu)$ in $\mathscr{D}$, the unitary representation $\varrho_{\chi, \mu}$ of~$G_0$ is irreducible.
    \item The map $\mathscr{D}\longrightarrow \hatGzero$ sending a pair $(\chi, \mu)$ to the equivalence class of $\varrho_{\chi, \mu}$ is surjective and descends to a bijection 
    \begin{equation}\label{eq-mackey-parametrization} \mathfrak{p}^\ast \doublequo K \overset{\simeq}{\longleftrightarrow} \hatGzero. \end{equation}
    \item The bijection~\textup{\eqref{eq-mackey-parametrization}} is a homeomorphism.
\end{enumerate}
\end{theorem}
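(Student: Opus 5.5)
Parts (a) and (b) are Mackey's little group method applied to the semidirect product $G_0 = K\ltimes\mathfrak{p}$ with abelian normal subgroup $\mathfrak{p}$. I will not reprove them; instead I will check that the hypotheses of Mackey's theorem hold. The group $K$ is compact, so the coadjoint action of $K$ on $\widehat{\mathfrak{p}}\simeq\mathfrak{p}^\ast$ has closed orbits, the orbit space is Hausdorff, and the action is regular in Mackey's sense. Mackey's theorem then says:
\begin{enumerate}[(i)]
\item each $\Ind_{K_\chi\ltimes\mathfrak{p}}^{G_0}(\mu\otimes\chi)$ with $\mu$ irreducible is irreducible;
\item every irreducible unitary representation of $G_0$ arises in this way;
\item two such representations are equivalent exactly when the pairs are $K$-conjugate.
\end{enumerate}
That last condition is the relation $\sim$ on $\mathscr{D}$, which gives the bijection \eqref{eq-mackey-parametrization}. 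Combined with the remark that every $K$-orbit meets $\adomstar$ exactly once, we may work with $\mathscr{D}_{\adomstar}$ throughout.

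The substance is (c), where the Fell topology must be compared with the topology generated by the sets $\mathscr{O}^\varepsilon_{\chi,\mu}$. My plan is to use the description of the Fell topology via restriction to the abelian normal subgroup and to $K$. Concretely, I will use the standard result on the topology of the dual of a regular semidirect product with compact "little" groups (Baggett, or Fell's continuity of induction and restriction). It says that $\varrho_{\chi_n,\mu_n}\to\varrho_{\chi,\mu}$ if and only if, after conjugating, $\chi_n\to\chi$ and eventually $\mu$ is weakly contained in (here: is a subrepresentation of) $\Ind_{K_{\chi_n}}^{K_\chi}\mu_n$. By Frobenius reciprocity this is $\operatorname{Hom}_{K_{\chi_n}}(\mu_n,\mu|_{K_{\chi_n}})\neq0$, which is the defining condition of $\mathscr{O}^\varepsilon_{\chi,\mu}$. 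Lemma~\ref{lem-taille-stab} guarantees that $K_{\chi_n}\subset K_\chi$ for $n$ large, so the condition makes sense.

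For the direction "convergence in $\mathscr{O}$-topology implies Fell convergence" I would argue directly. Take $\mu'\subset\mu|_{K_{\chi'}}$ with $\chi'$ close to $\chi$. Induction in stages gives $\varrho_{\chi',\mu'}\subset\Ind_{K_\chi\ltimes\mathfrak{p}}^{G_0}(\Ind_{K_{\chi'}}^{K_\chi}\mu'\otimes\chi')$, and the inner induced representation contains $\mu$. In the compact picture the carrier spaces $\Ind H_\mu$ do not depend on $\chi'$. The matrix coefficients $e^{i\langle k\cdot\chi',u\rangle}f(k^{-1}u)$ depend continuously on $\chi'$, uniformly on compacta, so the matrix coefficients of $\varrho_{\chi,\mu}$ are approximated by those of representations weakly contained in $\varrho_{\chi',\mu'}$ and nearby ones. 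This gives Fell convergence.

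For the converse, suppose $\varrho_{\chi_n,\mu_n}\to\varrho_{\chi,\mu}$. Restricting to $\mathfrak{p}$ (continuity of restriction) shows that the orbits $K\chi_n$ accumulate on $K\chi$, so $\chi_n\to\chi$ in $\adomstar$ by uniqueness of the dominant representative. Next restrict to $K$, or better to $K_\chi\ltimes\mathfrak{p}$, and use that $K$-isotypic projections are compact operators in $\Cs(G_0)$. Together these force some $K_\chi$-type of $\mu$ to appear compatibly, which yields the Hom condition.

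I expect this last step to be the main obstacle: extracting the Hom condition from Fell convergence, where the non-Hausdorff behaviour (the jump in stabilizers) has to be controlled. If the direct argument gets messy, I would fall back on citing Baggett's description of the dual topology of regular semidirect products, and reduce to checking that his "induced-from-limit" condition matches $\mathscr{O}^\varepsilon_{\chi,\mu}$ via Frobenius reciprocity.
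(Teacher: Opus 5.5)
Your proposal follows the same route as the paper. The paper does not prove this theorem itself: it attributes (a)--(b) to Mackey and (c) to Baggett, in the form extractable from Echterhoff--Emerson. Your use of Frobenius reciprocity to turn Baggett's limit condition $\mu\subset\Ind_{K_{\chi_n}}^{K_\chi}\mu_n$ into the condition $\operatorname{Hom}_{K_{\chi'}}(\mu',\mu)\neq 0$ that defines $\mathscr{O}^\varepsilon_{\chi,\mu}$ is exactly the intended translation.

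One caveat: your optional direct argument for the converse in (c) would not close as sketched. The $\mathfrak{p}$-spectrum and the $K$-types of $\varrho_{\chi_n,\mu_n}$ do not detect the $\operatorname{Hom}$ condition. For example, take $G=\mathrm{SL}(3,\mathbb{R})$, a dominant $\chi$ with $K_\chi\cong\mathrm{O}(2)$, $\mu=\det$, and $\mu_n$ one of the two characters of the Klein group $K_{\chi_n}$ occurring in the standard representation of $\mathrm{O}(2)$. Then every $K$-type of $\varrho_{\chi,\mu}$ occurs in $\varrho_{\chi_n,\mu_n}$, yet $\operatorname{Hom}_{K_{\chi_n}}(\mu_n,\mu)=0$. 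So for this direction you genuinely need the citation of Baggett, or else a localization on a slice $K\times_{K_\chi}S$ through $\chi$, where a bump function at $\chi$ times the $\mu$-isotypic projection of $\Cs(K_\chi)$ cuts out the required neighbourhood.
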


Part~(a)-(b) are due to Mackey~\cite{Mackey1949}; part~(c) is due to Baggett \cite{Baggett}, although we have phrased it a little differently---our formulation of~(c) can be extracted e.g. from the survey of Echeterhoff and Emerson \cite{Echterhoff_Emerson} on the dual spaces of crossed products in an operator-algebraic context.

\paragraph*{Reformulation using the imaginary part of the infinitesimal character.} 
We shall now rephrase the classification in Theorem~\ref{th-mackey-G0} in a manner parallel to that in Section~\ref{sec-parametrisation-bhy}. 

First, let us define an analogue,
for irreducible representations of $G_0$,
of the imaginary part of the infinitesimal character. 
The projection
$(\chi, \mu) \longmapsto \chi$, from $\mathscr{D}$ to $ \mathfrak{p}^\ast$,
descends to a  surjection 
\[ \operatorname{pr}\colon \mathfrak{p}^\ast \doublequo K\longrightarrow \mathfrak{p}^\ast\!/K\]
from the spectral extended quotient $\mathfrak{p}^\ast \doublequo K$ to the ordinary quotient $\mathfrak{p}^\ast\!/K$.
Now, as mentioned in the discussion leading up to Lemma~\ref{lem-taille-stab} above, every  $K$-orbit in $\mathfrak{p}^\ast$ meets $\amin^\ast$ in a single $W$-orbit, and therefore meets $\adomstar$ in a single point. Combining these observations, we obtain a canonical surjection 
\[\mathfrak{p}^\ast \doublequo K \overset{\operatorname{pr}}{\longrightarrow} \mathfrak{p}^\ast\!/K \simeq \amin^\ast/W \overset{\simeq}{\longrightarrow} \adomstar,\]
where the final bijection is given by~\eqref{eq-chambre-dom}. 
Therefore Theorem~\ref{th-mackey-G0} determines a canonical surjection 
\[ \Iminfch\colon \hatGzero\longrightarrow \adomstar.\]
Given an irreducible representation $\varrho$ of $G_0$, we say that the corresponding element of $\adomstar$ is the \emph{imaginary part of the infinitesimal character of~$\varrho$}. For the connection with the usual notion of infinitesimal character, and for remarks on the absence of a ``real part of the infinitesimal character'' in contrast to the situation for reductive groups, see \cite[Section~4]{BHY}.

As in Section~\ref{sec-parametrisation-bhy} this leads to a partition of $\hatGzero$ according to the singularity of the imaginary part of the infinitesimal character. Recall the partition~\eqref{eq-partition-chambre} of $\adomstar$ into  subsets~$\aiplus$ indexed by  subsets $I \subset S$ of the set of simple roots. Given a subset $I$ of~$S$, a unitary representation $\mu$ of the compact group $K_I = M_I \cap K$, and an element $\nu$ of $\aiplus$, we shall denote by 
\begin{equation}\label{eq-def-rho-imunu} \varrho_{I, \mu, \nu} \colon G_0 \longrightarrow \operatorname{End}\left(\Ind\  H_\mu\right)\end{equation}
 the representation introduced as $\varrho_{\nu^\ast, \mu}$ above, for the unique element $\nu^\ast$ of $\mathfrak{p}^\ast$ which extends~$\nu$ and is zero on the orthogonal complement of $\amin$.

For later purposes, we mention that in \eqref{eq-def-rho-imunu} the unitary representation $\mu$ of $K_I$ is allowed to be reducible. But the case where $\mu$ is irreducible is of course important, and we shall often use the following reformulation of parts~(a) and (b) in Theorem~\ref{th-mackey-G0}.

\begin{theorembis}{th-mackey-G0}\label{th-mackey-G0-reformule}~
\begin{enumerate}[(a)]
\item For every triple $(I, \mu, \nu)$ where $I \subset S$, where $\mu$ is an irreducible unitary representation of~$K_I$, and where $\nu \in \aiplus$,  the representation $\varrho_{I, \mu, \nu}$ of~$G_0$ is irreducible.
\item Every irreducible unitary representation of~$G_0$ is equivalent with one of the~$\varrho_{I, \mu, \nu}$. 
\item Given triples $(I, \mu, \nu)$ and $(I', \mu', \nu')$ where $I, I' \subset S$, where $\mu$, $\mu'$ are unitary irreducible representations of $K_I$ and $K_{I'}$ respectively, and  where $\nu \in \aiplus$, $\nu' \in \mathfrak{a}^\ast_{I', +}$, we have the equivalence  $\varrho_{I, \mu, \nu} \simeq \varrho_{I', \mu', \nu'}$ if and only if $I=I'$, $\mu \simeq \mu'$ and $\nu=\nu'$.
\end{enumerate}
\end{theorembis}

\subsubsection{The Mackey bijection}\label{sec-mackey-bij}

With the above preparations, it is easy to define the Mackey bijection $\hatGzero \longrightarrow \hatGtemp$. 

It follows from Theorem~\ref{th-mackey-G0-reformule} that every unitary irreducible representation of~$G_0$ is equivalent to a representation $\varrho_{I, \mu, \nu}$ for an essentially unique triple $(I, \mu, \nu)$. On the other hand, it follows from Theorem~\ref{th-vogan-tempiric} that every irreducible representation $\mu$ of $K_I$ arises as the unique lowest $K_I$-type of a tempiric  representation $\tau(\mu)$ of~$M_I$, and the equivalence class of $\tau(\mu)$ is determined by that of $\mu$.

The following theorem, originally established in \cite{Mackey_bijection} with a slightly different formulation, is now an immediate consequence of  Theorems~\ref{th-bhy-classification} and \ref{th-mackey-G0-reformule}:

\begin{theorem}[Mackey bijection]
The assignment 
\[ \varrho_{I, \mu, \nu} \longmapsto \pi_{I, \tau(\mu), \nu} \]
determines a bijection $\mathscr{M}\colon \hatGzero \overset{\simeq}{\longrightarrow} \hatGtemp$.
\end{theorem}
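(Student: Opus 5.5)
The plan is to check that the assignment $\varrho_{I,\mu,\nu}\longmapsto \pi_{I,\tau(\mu),\nu}$ is well defined on equivalence classes, that it is defined on all of $\hatGzero$ with values in $\hatGtemp$, and that it is injective and surjective. Everything reduces to matching the two parametrizations term by term, namely Theorem~\ref{th-mackey-G0-reformule} on the $G_0$ side and Theorem~\ref{th-bhy-classification} on the $G$ side. The bridge between them is Theorem~\ref{th-vogan-tempiric}, applied to the reductive group $M_I$ with maximal compact subgroup $K_I=M_I\cap K$.

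First, the map $\mu\longmapsto\tau(\mu)$. Applying Theorem~\ref{th-vogan-tempiric}(b) to $M_I$, the assignment $\sigma\longmapsto \LKT_{M_I}(\sigma)$ is a bijection $\hatMItempiric\to\widehat{K_I}$. Its inverse gives, for each irreducible $\mu\in\widehat{K_I}$, a tempiric $\tau(\mu)$ whose class depends only on the class of $\mu$, and $\mu\longmapsto\tau(\mu)$ is a bijection $\widehat{K_I}\to\hatMItempiric$.

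One point needs care here: $M_I$ should satisfy the standing hypotheses, being the real points of a connected reductive group, or at least lying in a class of groups where Vogan's result holds. I expect to handle this by noting that Vogan's theorem is valid in the broader Harish-Chandra class, which contains the $M_I$. This is the only step where something beyond bookkeeping might be hiding.

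Next, well-definedness and the target. By Theorem~\ref{th-mackey-G0-reformule}(b) and (c), each class in $\hatGzero$ is $\varrho_{I,\mu,\nu}$ for a unique triple $(I,[\mu],\nu)$ with $\nu\in\aiplus$ and $\mu$ irreducible. So $[\varrho_{I,\mu,\nu}]\longmapsto[\pi_{I,\tau(\mu),\nu}]$ is a well-defined map on classes. Its value is irreducible and tempered by Theorem~\ref{th-bhy-classification}(a).

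Then injectivity and surjectivity. Suppose $\pi_{I,\tau(\mu),\nu}\simeq\pi_{I',\tau(\mu'),\nu'}$. Theorem~\ref{th-bhy-classification}(c) gives $I=I'$, $\nu=\nu'$ and $\tau(\mu)\simeq\tau(\mu')$, and injectivity of $\tau$ then gives $\mu\simeq\mu'$. For surjectivity, any $\pi\in\hatGtemp$ is $\pi_{I,\tau,\nu}$ by Theorem~\ref{th-bhy-classification}(b), with $\tau$ tempiric on $M_I$. Writing $\tau=\tau(\mu)$ for $\mu$ the unique lowest $K_I$-type of $\tau$, the representation $\pi$ is the image of $\varrho_{I,\mu,\nu}$.

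Hence $\mathscr{M}$ is the composite of three bijections. The first is the parametrization of $\hatGzero$ by triples $(I,[\mu],\nu)$. The second applies $\tau$ in the middle coordinate. The third is the inverse of the parametrization of $\hatGtemp$ by triples $(I,[\tau],\nu)$. A composite of bijections is a bijection, which proves the theorem.
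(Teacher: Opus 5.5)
Your proposal is correct and follows essentially the same route as the paper, which treats the statement as an immediate consequence of Theorem~\ref{th-bhy-classification}, Theorem~\ref{th-mackey-G0-reformule}, and Vogan's lowest-$K_I$-type bijection $\widehat{K_I}\to\hatMItempiric$ for $M_I$. Your remark that $M_I$ need not lie in the paper's standing class of groups, so that Vogan's theorem should be invoked in Harish-Chandra's class, is a fair point that the paper passes over silently.
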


It is clear that $\mathscr{M}$ preserves the imaginary part of the infinitesimal character, in the sense that $\Iminfch(\mathscr{M}(\pi))=\Iminfch(\pi)$ for every $\pi \in \hatGzero$. It is less obvious, but no less true, that it preserves the lowest $K$-types of representations: as in Section~\ref{sec-basics_LKT}, to any element $\pi$ of $\hatGzero$ we associate the finite subset $\LKT_{G_0}(\pi) \subset \Khat$ of lowest $K$-types of~$\pi$; then \cite[Proposition~4.1]{Mackey_bijection} says that 
\begin{equation}\label{eq-mackey-preserves-lkt}\LKT_G\big(\mathscr{M}(\pi)\big)=\LKT_{G_0}(\pi)\end{equation}
for every $\pi \in \hatGzero$.

\subsection{\texorpdfstring{The Mackey embedding of $\Cs$-algebras}{The Mackey embedding of C*-algebras}}
\label{sec-def-embedding}

For this section we need to interpret the (tempered) representations of $G$ and $G_0$ as representations of  certain $\Cs$-algebras. We shall refer to  Dixmier \cite[Chapters 13, 18]{Dixmier} and Bekka, de la Harpe and Valette \cite[Appendix~F]{BdlHV} for the details of the connection. For our purposes it will suffice to say that every unitary  (resp. tempered) representation~$\pi$ of $G_0$ (resp. $G$), acting on a complex Hilbert space~$H$, can be ``integrated''  to a nondegenerate representation of the universal $\Cs$-algebra $\Cs(G_0)$ (resp. of the reduced $\Cs$-algebra $\Csr(G)$), acting on the \emph{same} Hilbert space~$H$. We shall still denote by $\pi$ the corresponding $\Cs$-morphism from $\Cs(G_0)$ (resp. $\Csr(G)$) to the $\Cs$-algebra $\bounded(H)$ of all continuous linear endomorphisms of~$H$. This procedure preserves irreducibility, unitary equivalence, and produces all nondegenerate representations of $\Cs(G_0)$ or $\Csr(G)$. In this way the \emph{spectra} of $\Cs(G_0)$ and  $\Csr(G)$ (i.e. the sets of equivalence classes of irreducible representations) are identified with $\hatGzero$ and $\hatGtemp$, respectively. We shall often make such identifications without further comment.

Perhaps the main result of \cite{CHR} is the construction of a natural embedding of $\Cs(G_0)$ into $\Csr(G)$, as follows. 

 \begin{theorem}\label{th-chr} There exists an injective $\Cs$-algebra morphism 
\[ \alpha\colon \Cs(G_0) \longrightarrow \Csr(G)\]
such that for every $I \subset S$, for each tempiric representation $\tau$ of $M_I$, and for every $\nu \in \mathfrak{a}_I^\ast$, we have
\begin{equation} \label{eq-spectral-CHR} \pi_{I,\tau, \nu} \circ \alpha \simeq \varrho_{I,\tau, \nu}.\end{equation}
\end{theorem}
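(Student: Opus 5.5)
The plan is to build $\alpha$ spectrally, using the Fourier picture of both $\Cs$-algebras, and to read off~\eqref{eq-spectral-CHR} directly from the construction. In~\eqref{eq-spectral-CHR} I interpret $\varrho_{I,\tau,\nu}$ as the representation $\varrho_{I,\tau|_{K_I},\nu}$ of~\eqref{eq-def-rho-imunu}, where $\tau|_{K_I}$ is in general reducible. The relevant Fourier picture on the $G$ side is the Plancherel description of $\Csr(G)$ due to Wassermann and Clare--Crisp--Higson. It says that $\Csr(G)$ is isomorphic to a direct sum, over associate classes of pairs $(P,\sigma)$ with $\sigma$ a discrete series (or limit) of $M$, of algebras of the form $C_0\big(\mathfrak{a}^\ast, \compact(\Ind H_\sigma)\big)^{W_\sigma}$. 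Here the invariance is with respect to the normalized Knapp--Stein intertwining operators. I would first regroup these summands according to the triples $(I,\tau)$ with $\tau\in\hatMItempiric$, using Theorem~\ref{th-bhy-classification}. For each $\tau$, the compact picture~\eqref{eq-def-Htau} realizes all $\pi_{I,\tau,\nu}$, for $\nu$ ranging over $\mathfrak{a}_I^\ast$, on the single space $\Ind H_\tau = \operatorname{L^2}(K,H_\tau)^{K_I}$.

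Next, for $f\in \Cs(G_0)$, I define the candidate field
\[ \nu \longmapsto \varrho_{I,\tau|_{K_I},\nu}(f)\in \compact(\Ind H_\tau), \qquad \nu\in\mathfrak{a}_I^\ast .\]
This makes sense because the compact picture of $\varrho_{I,\mu,\nu}$ uses exactly the space $\operatorname{L^2}(K,H_\mu)^{K_I}$ with $\mu=\tau|_{K_I}$. The operators $\varrho(f)$ are compact since $\Cs(G_0)=C_0(\mathfrak{p}^\ast)\rtimes K$. The field is norm-continuous in~$\nu$ and vanishes at infinity, since the $\nu$-dependence enters only through the characters $e^{i\langle k\cdot\nu^\ast,u\rangle}$. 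The assignment $f\mapsto$ (this field) is visibly a $*$-homomorphism into the big product algebra. Once we know it lands in $\Csr(G)$, the identity $\pi_{I,\tau,\nu}\circ\alpha = \varrho_{I,\tau,\nu}$ holds by construction, with literal equality on $\Ind H_\tau$.

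The main obstacle is to show that these fields actually lie in $\Csr(G)$. That is, they must satisfy the same symmetry relations as the Fourier transforms of elements of $\Csr(G)$. Concretely, for $w$ in the relevant Weyl group, the field must be intertwined by the normalized Knapp--Stein operators $A(w,\nu)$ between the fibres at $\nu$ and $w\nu$. On the $G_0$ side, the natural intertwiners are just translations by a representative of $w$ in $K$, acting through the $K$-action on $\operatorname{L^2}(K,H_\tau)$. So one must compare the two families of operators. My first attempt would exploit the tempiric hypothesis on~$\tau$. The operators $A(w,\nu)$ are unitary, commute with~$K$, and preserve the compact picture. For real infinitesimal character, Vogan's theory (Theorem~\ref{th-vogan-tempiric}) pins them down on the unique lowest $K$-type and hence, I hope, everywhere. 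I would try to show that, after a choice of normalization, they coincide with the $K$-translation operators up to a $\nu$-independent unitary on $H_\tau$ commuting with~$\tau$. If this fails globally, I would instead modify the embedding on each block by conjugating with a continuous unitary field that straightens the Knapp--Stein operators to constants. Such a field should exist because $\mathfrak{a}_I^\ast$ is contractible and the relevant isotropy is finite.

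Finally, injectivity follows from the spectral identity. By the Mackey bijection, every irreducible representation of $G_0$ has the form $\varrho_{I,\mu,\nu}$, and $\mu$ is a lowest $K_I$-type of $\tau(\mu)$, hence a constituent of $\tau(\mu)|_{K_I}$. Therefore every $\varrho_{I,\mu,\nu}$ is a subrepresentation of $\pi_{I,\tau(\mu),\nu}\circ\alpha$. An element of $\Ker\alpha$ thus lies in every primitive ideal of $\Cs(G_0)$, and so it is zero.
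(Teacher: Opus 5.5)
There is a genuine gap. The whole difficulty of the theorem lies in the step you leave open, namely showing that the fields $\nu\mapsto\varrho_{I,\tau,\nu}(f)$ define an element of $\Csr(G)$. Neither mechanism you propose for it works.

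\emph{Literal equality is impossible.} Equality $\pi_{I,\tau,\nu}\circ\alpha=\varrho_{I,\tau,\nu}$ cannot hold for all $\nu\in\mathfrak{a}_I^\ast$. Take $G=\mathrm{SL}(2,\R)$, $I=\varnothing$, and $\tau$ the trivial character of $M=\{\pm1\}$.
\begin{itemize}
\item For $\nu\neq0$, both $\pi_{\varnothing,\tau,\pm\nu}$ and $\varrho_{\varnothing,\tau,\pm\nu}$ are irreducible.
\item Every element $g$ of $\Csr(G)$ satisfies $\pi_{\varnothing,\tau,-\nu}(g)=\mathcal{A}(w,\nu)\,\pi_{\varnothing,\tau,\nu}(g)\,\mathcal{A}(w,\nu)^{-1}$, where $\mathcal{A}(w,\nu)$ is the normalized Knapp--Stein operator.
\item On the other side, $\varrho_{\varnothing,\tau,-\nu}=R(w)\,\varrho_{\varnothing,\tau,\nu}\,R(w)^{-1}$, where $R(w)$ is right translation by the rotation through $\pi/2$.
\end{itemize}
Literal equality would therefore force $R(w)^{-1}\mathcal{A}(w,\nu)$ to be a scalar. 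Up to a scalar, this is the standard intertwiner between the $P$- and $\bar P$-pictures. It acts on the $K$-type $e^{in\theta}$ by a multiple of $\bigl(\Gamma(\tfrac{1+i\nu+n}{2})\Gamma(\tfrac{1+i\nu-n}{2})\bigr)^{-1}$, and these eigenvalues are not all equal: for even $n$ the ratio of consecutive eigenvalues is $(i\nu-n-1)/(i\nu+n+1)\neq1$. So your ``first attempt'' is false. This is why the paper stresses, in Remarks~\ref{rk-chr-constructif}(b) and~\ref{rk-non-unicite-du-plongement}, that \eqref{eq-spectral-CHR} is an equivalence and that the intertwiners cannot all be scalar.

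\emph{The ``regrouping'' hides the same problem.} The families indexed by $(I,\tau)$ are not summands of the Plancherel decomposition. They are compressions of restrictions of $(P,\sigma)$-families to subspaces $\mathfrak{a}_I^\ast\subset\mathfrak{a}_P^\ast$, and at non-generic $\nu$ a fibre of one family is a sum of fibres of others. Prescribing a field on every $(I,\tau)$ over all of $\mathfrak{a}_I^\ast$ therefore overdetermines the element, and checking that these prescriptions are consistent is the same unaddressed intertwiner problem.

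\emph{The fallback misplaces the obstruction.} Conjugating by a continuous unitary field does not help, and contractibility of $\mathfrak{a}_I^\ast$ is beside the point: the obstruction lives at the points of $\mathfrak{a}_I^\ast$ with nontrivial isotropy. In the example above, at $\nu=0$ the spherical principal series is irreducible, so $\mathcal{A}(w,0)$ is a scalar. By contrast, $R(w)$ acts on $e^{in\theta}$ by $i^n$, which is not a scalar. No unitary conjugation, continuous or not, can turn one into the other.

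What rescues a construction at such points is that both symmetry operators lie in the commutant of $\varrho_{I,\tau,\nu}(\Cs(G_0))$, so only the operator fields need to match, not the symmetries. Proving this in general is the representation-theoretic input your proposal lacks. For $\nu\in\mathfrak{a}^\ast_{J,+}$, the Knapp--Stein self-intertwiners come from $M_J$ by induction in stages. In the compact picture $\operatorname{L^2}(K,V)^{K_J}$, with $V$ the compact picture of the relevant induced representation of $M_J$, they act fibrewise by $K_J$-equivariant maps. Meanwhile $\varrho$ sees only the $K_J$-module structure of the fibre. This is what would let you impose equality only on a fundamental domain for the symmetries, and extend by Knapp--Stein operators continuously across walls.

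For comparison, the paper does not reprove this theorem; it imports it from \cite{CHR}. There $\alpha$ arises as the $t\to0$ limit of rescaling automorphisms $\alpha_t$ of $\Csr(G)$, and the equivalence is implemented by explicit Knapp--Stein-type intertwiners $\mathscr{B}_{\tau,\nu}$. Your injectivity argument is fine.
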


\begin{remark}\label{rk-restriction-ktypes}~
\begin{enumerate}[(a)]
\item
On the right-hand side of~\eqref{eq-spectral-CHR} is the unitary representation of $G_0$ attached to $I$, $\nu$ and the restriction of $\tau$ to $K_I$. In general this restriction is an infinite direct sum of irreducibles, and $\varrho_{I, \tau, \nu}$ is an infinite direct sum of irreducible representations of~$G_0$. A key point is that if  $\mu$ denotes the lowest $K_I$-type of $\tau$, then 
\begin{enumerate}[(i)]
\item $\varrho_{I, \tau, \nu}$ contains $\varrho_{I, \mu, \nu}$ with multiplicity one, 
\item the set of lowest $K$-types in $\varrho_{I, \tau, \nu}$ is exactly $\LKT\left(\pi_{I, \tau, \nu}\right)$, and
\item  these lowest $K$-types all appear in the irreducible summand $\varrho_{I, \mu, \nu}$ of $\varrho_{I, \tau, \nu}$. 
\end{enumerate}
For all this see \cite[proof of Proposition~4.1]{Mackey_bijection} and \cite[Lemma 7.4.2]{CHR}. 

\item Point~(a) above yields the following characterization of the  Mackey bijection: for every irreducible tempered representation $\pi$ of~$G$, or equivalently of $\Csr(G)$,  the composition $\pi \circ \alpha$ splits into a direct sum of irreducible representations of $\Cs(G_0)$,  the lowest $K$-types are all contained in the same irreducible summand, and the equivalence class of this summand is the element of $\hatGzero$ which corresponds to $\pi$ in the Mackey bijection.
\end{enumerate}
\end{remark}

\begin{remark} \label{rk-chr-constructif}
\begin{enumerate}[(a)]
\item While we formulated Theorem~\ref{th-chr} as an abstract existence result, in \cite{CHR} an embedding $\alpha\colon \Cs(G_0) \longrightarrow \Csr(G)$ as in the theorem is  constructed in a rather concrete manner. The construction depends on a number of choices which we shall discuss further  in Section~\ref{sec-localizations}. 

\item The representations of $\Cs(G_0)$ on both sides of~\eqref{eq-spectral-CHR} are on the same Hilbert space, viz. the space $\Ind H_\tau$ in \eqref{eq-def-Htau}. The statement is that these are unitarily equivalent, and \emph{not} equal: this plays a minor role in Sections~\ref{sec-strat-eq} and~\ref{sec-topology}, but will become essential in Section~\ref{sec-localizations}.
\item Elaborating on part~(a), let $I$ be a subset of $S$ and  $\tau$ be a tempiric representation of~$M_I$. Part of the theorem says that there exists a family $\left(\mathscr{B}_{\tau, \nu}\right)_{\nu \in \mathfrak{a}_I^\ast}$ 
of automorphisms of $\Ind H_\tau$ with the property that $\pi_{I, \tau, \nu}\left(\alpha(f)\right)=\mathscr{B}_{\tau, \nu}\,\varrho_{I, \tau, \nu}(f)\,\mathscr{B}_{\tau, \nu}^{-1}$
for all $\nu \in \mathfrak{a}_I^\ast$ and all $f \in \Cs(G_0)$. The embedding $\alpha\colon \Cs(G_0) \longrightarrow \Csr(G)$ constructed in \cite{CHR} comes with a concrete construction of a family $\left(\mathscr{B}_{\tau, \nu}\right)_{\nu \in \mathfrak{a}_I^\ast}$  of unitary operators that satisfy this property and which, moreover, depend \emph{continuously} on $\nu$. See \cite[Theorem 6.3.7]{CHR}. 
\end{enumerate}\end{remark}

Throughout the rest of the paper we fix an embedding $\alpha\colon \Cs(G_0)\longrightarrow \Csr(G)$ with the property in Theorem~\ref{th-chr}. In Section~\ref{sec-localizations} we shall moreover assume that it is obtained from the construction in \cite{CHR}, yielding continuous families $\left(\mathscr{B}_{\tau, \nu}\right)_{\nu \in \mathfrak{a}_I^\ast}$ of intertwining operators as in Remark~\ref{rk-chr-constructif}(c). See Section~\ref{sec-compatibility}, as well as Section~\ref{sec-converse-and-remarks} for related remarks on the \mbox{(non-)uniqueness} of an embedding $\alpha$ that satisfies~\eqref{eq-spectral-CHR}. 

\section{The Mackey embedding as a stratified equivalence}\label{sec-strat-eq}

\subsection{Basics on the Jacobson and Fell topologies}\label{sec-basics-cstar}
In Section~\ref{sec-def-embedding} we recalled that the duals $\hatGtemp$ and $\hatGzero$ can be identified with the spectra of the reduced $\Cs$-algebra $\Csr(G)$ and the universal $\Cs$-algebra $\Cs(G_0)$, respectively. 

In this section we collect basic facts on the interplay between the structure of a $\Cs$-algebra~$A$ and the Jacobson topology on its spectrum. We will use these when~$A$ is either $\Cs(G_0)$ or~$\Csr(G)$, in order to view the locally closed subsets of $\hatGzero$ or $\hatGtemp$ as the spectra of certain subquotients of $\Cs(G_0)$ or $\Csr(G)$. As an application we shall outline a proof that containment of a given $K$-type defines an open condition in the unitary dual of~$G$ or~$G_0$.  Readers who already know these facts  can safely skip this section. 

\subsubsection{Primitive ideals and the Fell topology}  \label{sec-basics-cstar-1}

Let $A$ be a $\Cs$-algebra. By the \emph{spectrum} of~$A$ we shall mean the set of equivalence classes of irreducible representations of~$A$, as in \cite{Dixmier}; we denote it by $\widehat{A}$ or $\operatorname{Spec}(A)$. By a \emph{primitive ideal} of~$A$, we shall mean the annihilator $\{ f \in A \ : \ \pi(f)=0\}$ of an irreducible representation~$\pi$ of~$A$; this annihilator depends only on the equivalence class of the representation~$\pi$.  We shall denote by $\Prim(A)$ the set of all primitive ideals, as in \cite[\S\,2.9.7]{Dixmier}, and equip it with the Jacobson topology \cite[\S\,3.1]{Dixmier}. We shall identify $\Prim(A)$ with $\widehat{A}$ as in \cite[loc. cit.]{Dixmier}.

By definition of the Jacobson topology, the closed subsets of $\widehat{A}$ then correspond to closed (two-sided) ideals in~$A$: if $X \subset \widehat{A}$ is closed, then  
\[ J_X = \{\ f \in A \ : \ \forall \pi \in X, \pi(f)=0\ \}\] is a closed ideal in~$A$, and every closed ideal arises in this way. 

Given a closed ideal $J$ in~$A$, every irreducible representation of~$J$ extends uniquely to an irreducible representation of~$A$, and this determines a homeomorphism between $\widehat{J}$ and the open subset $\widehat{A}^J$ of representations that do not vanish on~$J$: see \cite[\S\,2.11 and \S\,3.1]{Dixmier}. In this way, if $J \subset J' \subset A$ are nested closed ideals in~$A$, then the spectra of~$J$ and $J'$ correspond to nested open subsets in the spectrum of~$A$, and we shall identify the spectrum of $J'/J$ with the locally closed subset of $\widehat{A}$ given by the set-theoretic difference of these two open subsets. If~$\pi$~is a representation of~$A$ whose equivalence class belongs to that subset, we shall also use the letter $\pi$ to denote its restriction to~$J'$, or the representation of~$J'/J$ obtained from the latter by passing to the quotient.

\subsubsection{\texorpdfstring{Containment of $K$-types as an open condition}{Containment of K-types as an open condition}}

Recall that if~$G$ is a locally compact group, the unitary dual $\widehat{G}$ identifies with the spectrum of the universal $\Cs$-algebra $\Cs(G)$; this identifies  the Fell topology on $\widehat{G}$ with the Jacobson topology on the spectrum of $\Cs(G)$.  As an illustration of the ideas in Section~\ref{sec-basics-cstar-1}, we indicate an application to unitary group representations which is widely known, but perhaps not that easy to locate in the literature, and important for our later purposes. 

\begin{lemma}\label{lem-Ktype-ouvert}
Let $G$ be a locally compact group, and let $K$ be a compact subgroup of~$G$. Denote the unitary duals of $G$ and $K$ by $\widehat{G}$ and $\widehat{K}$. 
 Let $\lambda$ be an element of $\Khat$, and let $\mathcal{U}_\lambda$  be the subset of~$\widehat{G}$ consisting of those representations $\pi$ whose restriction to~$K$ contains~$\lambda$. Then $\mathcal{U}_\lambda$ is open in $\widehat{G}$. 
\end{lemma}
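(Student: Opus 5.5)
The plan is to realize $\mathcal{U}_\lambda$ as the open set $\widehat{A}^J$ attached to a closed ideal $J$ of $A=\Cs(G)$, using the dictionary of Section~\ref{sec-basics-cstar-1}. Normalize Haar measure on $K$ and let $e_\lambda(k)=\dim(\lambda)\,\overline{\chi_\lambda(k)}$, with $\chi_\lambda$ the character of $\lambda$. For every unitary representation $\pi$ of $G$, the operator
\[ p_\lambda^\pi=\int_K e_\lambda(k)\,\pi(k)\,dk \]
is the orthogonal projection onto the $\lambda$-isotypic subspace of $\pi|_K$. Hence $\pi\in\mathcal{U}_\lambda$ if and only if $p^\pi_\lambda\neq 0$.

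The only subtlety is that $p^\pi_\lambda$ is the image of a measure on $G$, not of an element of $\Cs(G)$. I will treat it as an element $p_\lambda$ of the multiplier algebra $M(\Cs(G))$: the measure $e_\lambda\,dk$ is finite and compactly supported, so convolution by it preserves $L^1(G)$ and extends to a multiplier of $\Cs(G)$. For every nondegenerate representation $\pi$, the canonical extension of $\pi$ to $M(\Cs(G))$ sends $p_\lambda$ to $p^\pi_\lambda$. I then set
\[ J=\overline{\operatorname{span}}\,\Cs(G)\,p_\lambda\,\Cs(G), \]
which is a closed two-sided ideal of $\Cs(G)$.

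It remains to check that an irreducible $\pi$ does not vanish on $J$ exactly when $p^\pi_\lambda\neq0$. If $\pi(J)\neq0$, then clearly $\pi(p_\lambda)\neq 0$. Conversely, suppose $\pi(p_\lambda)\neq0$. Nondegeneracy gives some $a\in\Cs(G)$ and a vector $\xi$ with $\pi(p_\lambda)\pi(a)\xi\neq0$. Taking $b=a^*$, we get
\[ \langle \pi(b\,p_\lambda\, a)\xi,\xi\rangle=\lVert\pi(p_\lambda)\pi(a)\xi\rVert^2\neq0, \]
using that $p_\lambda$ is a self-adjoint projection, so $\pi(J)\neq0$. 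Therefore $\mathcal{U}_\lambda=\widehat{A}^J$, which is open by Section~\ref{sec-basics-cstar-1}.

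The main point needing care is the multiplier-algebra step: verifying that the extension of $\pi$ to $M(\Cs(G))$ really sends $p_\lambda$ to the integrated operator $\int_K e_\lambda(k)\pi(k)\,dk$. I would do this with an approximate unit $(u_i)$ in $L^1(G)$. The elements $p_\lambda * u_i$ lie in $L^1(G)$, and $\pi(p_\lambda*u_i)=p^\pi_\lambda\,\pi(u_i)$ converges strongly to $p^\pi_\lambda$. The same elements $p_\lambda*u_i$ also give a more hands-on alternative that avoids multipliers altogether: take $J$ to be the ideal generated by all $p_\lambda*f$ with $f\in L^1(G)$.
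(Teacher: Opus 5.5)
Your proof is correct and is the same argument as the paper's. The paper also takes the multiplier $p_\lambda$ of $\Cs(G)$ given by convolution with the character of $\lambda$, and identifies the complement of $\mathcal{U}_\lambda$ with the hull of the closed ideal $\Cs(G)\,p_\lambda\,\Cs(G)$. You supply the details the paper leaves implicit: the normalization $\dim(\lambda)\,\overline{\chi_\lambda}$ that makes $p_\lambda$ exactly the $\lambda$-isotypic projection, the identification of its image under the extension of $\pi$ to $M(\Cs(G))$, and the nondegeneracy argument showing that $\pi(J)\neq 0$ if and only if $\pi(p_\lambda)\neq 0$.
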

\begin{proof}  Let $\chi_\lambda$  be the character of~$\lambda$, viewed as a continuous function $K \longrightarrow \mathbb{C}$. Then, once a left Haar measure on~$G$ is fixed, convolution by $\chi_\lambda$ defines a multiplier $p_{\lambda}$ of the $\Cs$-algebra~$\Cs(G)$; one may then attach to $\lambda$ the closed ideal $J_\lambda=\Cs(G) p_{\lambda} \Cs(G)$.  An irreducible unitary representation of~$G$ vanishes on $J_\lambda$ if and only if it does not contain $\lambda$ upon restriction to~$K$. Since $J_\lambda$ is a closed ideal in $\Cs(G)$, it follows that the irreducible unitary representations which do not contain $\lambda$ make up a closed subset of~$\widehat{G}$. Since this is the complement of $\mathcal{U}_\lambda$ in  $\widehat{G}$, the result follows.\end{proof}

\subsection{\texorpdfstring{Stratified equivalence morphisms for topological $\mathbb{C}$-algebras}{Stratified equivalence morphisms for topological C-algebras}}\label{sec-strat-defs}

This subsection collects definitions that, up to minor variation, arose from the study of certain Hecke algebras attached to Bernstein components of the smooth dual of $p$-adic groups: for this see the work of Lusztig \cite{Lusztig}, Baum--Nistor \cite{BaumNistor}, and Aubert--Baum--Plymen--Solleveld (especially \cite{ABPS_Morita}).

\begin{definition}[Baum--Nistor]\label{def-baumnistor} Let~$A$ and~$B$ be $\mathbb{C}$-algebras, and $f\colon A \longrightarrow B$ be an algebra homomorphism. The morphism~$f$ is \emph{spectrum-preserving} if it satisfies the following two conditions:
\begin{enumerate}[(i)]
    \item For every primitive ideal~$J\subset B$, the set $f^{-1}(J)$ is contained in a unique primitive ideal of~$A$. 
    \item The resulting map $\Prim(B) \longrightarrow \Prim(A)$, which sends any primitive ideal~$J$ of~$B$ to the unique primitive ideal of~$A$ that contains $f^{-1}(J)$, is a bijection. 
\end{enumerate}
\end{definition}

\begin{remark}\label{rem-finite-type} If $A$ and~$B$ are finitely generated as algebras, then Baum and Nistor proved that the map $\operatorname{Prim}(B) \longrightarrow \operatorname{Prim}(A)$ attached to $f$ is automatically a homeomorphism in the Jacobson topology \cite[Theorem~2]{BaumNistor}.
\end{remark}

Now suppose~$A$ is a   topological $\mathbb{C}$-algebra. By a \emph{filtration} of~$A$ we shall mean an increasing family $\mathcal{J}=(J_n)_{n \in \N}$ of closed ideals, with $J_0=\{0\}$, such that the union $\bigcup_{n \in \N} J_n$ is dense in~$A$. If there exists $n \in \N$ such that $J_n=A$, we shall say the filtration $\mathcal{J}$ is \emph{finite}, and call \emph{number of pieces of~$\mathcal{J}$} the smallest integer $n\geq 1$ such that $J_n=A$. 

\begin{definition} \label{def-stratified-equivalence} Let~$A$ and~$B$ be topological $\mathbb{C}$-algebras, and let~$f\colon A \longrightarrow B$ be a continuous algebra homomorphism. Let $\mathcal{I}=\left(I_n\right)_{n \in \N}$ and $\mathcal{J}=(J_n)_{n \in \N}$ be filtrations of~$A$ and~$B$, respectively. We call $f$ a \emph{stratified equivalence morphism} (with respect to the filtrations $\mathcal{I}$ and~$\mathcal{J}$) when:
\begin{enumerate}[(i)]
    \item The morphism $f$ respects the filtrations $\mathcal{I}$ and~$\mathcal{J}$, i.e. $f\left(I_n\right) \subset J_n$ for all $n$; 
    \item For all $n \geq 1$, the morphism $I_n/I_{n-1} \longrightarrow J_n/J_{n-1}$ induced by $f$ is spectrum-preserving. 
\end{enumerate}
\end{definition}

\begin{remark}\label{rk-stratified-induces-bijection}
    Any stratified equivalence morphism $f\colon A \longrightarrow B$ gives rise to a bijection between the spectrum of~$B$ and the spectrum of~$A$, since these can be identified with the disjoint unions of the spectra of all subquotients $B_{i}/B_{i-1}$ and $A_i/A_{i-1}$. By Remark~\ref{rem-finite-type}, if~$A$ and~$B$ are finitely generated as $\mathbb{C}$-algebras and if the given filtrations on $A$ and~$B$ are finite (with the same number of pieces), then this bijection is a \emph{piecewise homeomorphism}: in the notation of Definition~\ref{def-stratified-equivalence}, for each integer~$n$, it induces a homeomorphism between the subsets of $\operatorname{Prim}(B)$ and $\operatorname{Prim}(A)$ corresponding to $J_n/J_{n-1}$ and $I_n/I_{n-1}$.
\end{remark}

In the rest of this paper, we shall use the notions of spectrum-preserving and stratified equivalence morphisms in the context of $\Cs$-algebra morphisms. These are of course also homomorphisms of topological $\mathbb{C}$-algebras, but in the $\Cs$-algebraic context the notion of primitive ideal is that of  Section~\ref{sec-basics-cstar}, and therefore slightly different from the more algebraic version used in Definition~\ref{def-baumnistor}. However, using the $\Cs$-algebraic notion of primitive ideal, Definitions~\ref{def-baumnistor} and~\ref{def-stratified-equivalence} apply verbatim, and yield the notions of spectrum-preserving and stratified equivalence morphisms which we shall use in the rest of this paper.

\subsection{\texorpdfstring{Two stratifications of $\Cs(G_0)$ and $\Csr(G)$ by lowest $K$-types}{Two stratifications of C*(G0) and C*r(G) by lowest K-types}}\label{sec-filtrations}

\subsubsection{\texorpdfstring{Stratification by the height of the lowest $K$-types}{Stratification by the height of the lowest K-types}}\label{sec-first-filtration}

Recall that on each of $\hatGtemp$ and $\hatGzero$, we defined a ``height of the lowest $K$-types''  function
\begin{align*}
\pi & \longmapsto \Knorm{\LKT(\pi)}
\end{align*}
in Section~\ref{sec-basics_LKT}. The height functions attached to $\hatGtemp$ and $\hatGzero$ have the same image in~$\R_{\geq 0}$. It is a discrete, locally finite set~$\Lambda$ of real numbers, and we shall write
\begin{equation} \Lambda = \{ R_1, \ldots, R_k, \ldots \} \end{equation}
where $(R_k)_{k \geq 1}$ is a strictly increasing sequence of nonnegative real numbers. To simplify our later notation, we set $R_0=-1$ (any negative real number would do). 

We shall define filtrations of $\Cs(G_0)$ and $\Csr(G)$ according to the height of  lowest $K$-types. For any nonnegative integer~$k$, let ${\mathscr{V}}_k$ be following the subset of $\hatGtemp$: 
\begin{equation}\label{eq-def-vk} 
\mathscr{V}_k=\left\{\ \pi \in \hatGtemp \ : \ \Knorm{\LKT(\pi)} > R_k\ \right\}.\end{equation}  Consider its annihilator ${\Ups}_k$ in~$\Csr(G)$: 
\[{\Ups}_k = \left\{\ f \in \Csr(G)\ : \ \forall \pi \in {\mathscr{V}}_{k}, \  \pi(f)=0 \ \right\}.\]

Similarly, on the $G_0$-side let ${\mathscr{V}}_k^0$ be the subset of $\hatGzero$ which comprises the irreducible representations~$\pi$ such that $\Knorm{\LKT(\pi)} > R_k$, and let ${\Ups}^0_k$ be the annihilator of $\mathcal{V}_k^0$ in~$\Cs(G_0)$.

\begin{proposition} \label{prop-first-filtration}

\begin{enumerate}[(a)]
\item For every nonnegative integer $k$, the collection $\mathscr{V}_k$  of representations is a closed subset of $\hatGtemp$, and ${\Ups}_k$  is a closed ideal in $\Csr(G)$. 
\item For every nonnegative integer $k$, the set $\mathscr{V}^0_k$  is closed in $\hatGzero$, and ${\Ups}^0_k$  is a closed ideal in $\Cs(G_0)$. 
\item The families $(\Ups_k)_{k \in \N}$ and $(\Ups^0_k)_{k \in \N}$ are increasing filtrations of $\Csr(G)$ and $\Cs(G_0)$, respectively.
\end{enumerate}
\end{proposition}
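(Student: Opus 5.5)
The plan is to show that $\mathscr{V}_k$ is closed by proving its complement is open, and to obtain that openness from Lemma~\ref{lem-Ktype-ouvert}. The complement of $\mathscr{V}_k$ in $\hatGtemp$ is the set of $\pi$ with $\Knorm{\LKT(\pi)}\le R_k$. This is exactly the set of $\pi$ whose restriction to $K$ contains at least one $K$-type $\lambda$ with $\Knorm{\lambda}\le R_k$.

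To check this equality, note two things. If $\Knorm{\LKT(\pi)}\le R_k$, then any lowest $K$-type of $\pi$ has height at most $R_k$. Conversely, if $\pi$ contains some $\lambda$ with $\Knorm{\lambda}\le R_k$, then by minimality its lowest $K$-types have height at most $\Knorm{\lambda}\le R_k$. Hence
\[ \hatGtemp\setminus\mathscr{V}_k=\bigcup_{\lambda\in\Khat,\ \Knorm{\lambda}\le R_k}\left(\mathcal{U}_\lambda\cap\hatGtemp\right). \]
The set $\hatGtemp$ carries the Fell topology induced from $\widehat{G}$; equivalently, it is the spectrum of the quotient $\Csr(G)$ of $\Cs(G)$, which is a closed subset of $\widehat{G}$ with the subspace topology. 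So each $\mathcal{U}_\lambda\cap\hatGtemp$ is open by Lemma~\ref{lem-Ktype-ouvert}, and the union is open. One could also run the argument of that lemma directly inside $\Csr(G)$, since convolution by $\chi_\lambda$ still defines a multiplier there. This gives closedness of $\mathscr{V}_k$ in (a). The same argument applied to $G_0$ with its compact subgroup $K$ gives (b).

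That $\Ups_k$ and $\Ups^0_k$ are closed two-sided ideals is immediate: each is an intersection of kernels of representations. Moreover, by Section~\ref{sec-basics-cstar-1}, $\Ups_k$ is the ideal $J_{\mathscr{V}_k}$ corresponding to the closed set $\mathscr{V}_k$. Its spectrum is therefore the open set $\{\pi:\Knorm{\LKT(\pi)}\le R_k\}$.

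For (c), monotonicity comes first. Since $R_k<R_{k+1}$, we have $\mathscr{V}_{k+1}\subset\mathscr{V}_k$, so the annihilators satisfy $\Ups_k\subset\Ups_{k+1}$. Next, $\Ups_0=\{0\}$: since $R_0=-1$, we get $\mathscr{V}_0=\hatGtemp$, and an element of a $\Cs$-algebra killed by all irreducible representations is zero. It remains to show that $\bigcup_k\Ups_k$ is dense.

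Density is the only genuinely non-formal step. I would argue via the spectrum. The closure $J$ of $\bigcup_k\Ups_k$ is a closed ideal, and its spectrum is the union of the open sets $\widehat{\Ups_k}=\{\pi:\Knorm{\LKT(\pi)}\le R_k\}$. Every $\pi$ has finite lowest $K$-type height, and that height equals some $R_j$ because $\Lambda$ is exactly the image of the height function; so $\pi$ lies in $\widehat{\Ups_j}$. Thus no irreducible representation vanishes on $J$, and a closed ideal whose hull is empty equals the whole algebra. Hence $J=\Csr(G)$, and likewise for $\Cs(G_0)$.

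I expect the main point to check is the identification of the Fell topology on $\hatGtemp$ with the Jacobson topology of $\Csr(G)$, so that Lemma~\ref{lem-Ktype-ouvert} transfers. This is handled by the closed-subset remark above.
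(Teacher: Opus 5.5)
Your proof is correct and follows essentially the paper's route: you write the complement of $\mathscr{V}_k$ as the union of the open sets $\mathcal{U}_\lambda\cap\hatGtemp$ over $\Knorm{\lambda}\le R_k$ (by Lemma~\ref{lem-Ktype-ouvert} and the identification of $\hatGtemp$ as a closed subspace of $\widehat{G}$), argue the same way for $G_0$, and get monotonicity and density from the fact that the spectra of the $\Ups_k$ exhaust the dual. Your density argument, via the empty hull of $\overline{\bigcup_k\Ups_k}$ (i.e.\ $\bigcap_k\mathscr{V}_k=\varnothing$), is the precise form of the paper's one-line justification, and your check that $\Ups_0=\{0\}$ makes explicit a point the paper leaves implicit.
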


\begin{proof}
By definition, the set $\mathcal{V}_k$ consists of those irreducible tempered representations whose restriction to~$K$ contains none of the representations $\lambda \in \Khat$ with $\Knorm{\lambda} \leq R_k$. For $\lambda \in \Khat$, denote by $\mathcal{U}_\lambda$ the subset of $\hatGtemp$ consisting of those representations which contain the $K$-type $\lambda$. Thus $\mathcal{V}_k$ is the intersection of the sets $\hatGtemp \setminus \mathcal{U}_\lambda$, for $\lambda$ such that  $\Knorm{\lambda} \leq R_k$. By Lemma~\ref{lem-Ktype-ouvert}, combined with the fact that $\hatGtemp$ is closed in the unitary dual of~$G$, we know that the sets $\hatGtemp \setminus \mathcal{U}_\lambda$ are all closed. Part~(a) follows immediately.
The same argument applies with $G_0$ instead of~$G$ and yields~(b). For~(c), it is clear from the definitions that $\mathcal{V}_{k+1} \subset \mathcal{V}_k$ for all~$k$, therefore $\Ups_k \subset \Ups_{k+1}$ for all~$k$. 
Finally, the union of all sets $\mathcal{V}_k$ is all of $\hatGtemp$; therefore  the union of the $\Ups_k$ is dense in $\Csr(G)$. The same argument applies to prove that the union of the~$\Ups_k^0$ is dense in $\Cs(G_0)$.
\end{proof}

\subsubsection{\texorpdfstring{Stratification by the possible sets of lowest $K$-types}{Stratification by the possible sets of lowest K-types}}\label{sec-second-filtration}

In this section, we define another pair of filtrations of $\Cs(G_0)$ and $\Csr(G)$ which keep track of the precise sets of lowest $K$-types of representations, not just the height of the lowest $K$-types. As a result, these filtrations will be finer than the ones in Proposition~\ref{prop-first-filtration}.

Let $\mathscr{E}$ be the collection of subsets of $\Khat$ that arise as $\LKT(\pi)$ for at least one irreducible tempered representation~$\pi$ of~$G$. In the notation of \S~\ref{sec-basics_LKT}, on any given element~$E$ of~$\mathscr{E}$, the function $\lambda \longmapsto \Knorm{\lambda}$ is constant; in other words the image $\Knorm{E}$ consists of a single number.

The set $\mathscr{E}$ is countable; we shall fix a labelling 
\[ \mathscr{E} = \{E_1, E_2,  \ldots \}\]
satisfying the following conditions: 
\begin{enumerate}[(i)]
\item If $\Knorm{E_p} < \Knorm{E_q}$, then $p<q$;
\item If $\Knorm{E_p}=\Knorm{E_q}$ and $\operatorname{Card}(E_p)>\operatorname{Card}(E_q)$, then $p<q$. 
\end{enumerate}

\begin{example} 
To illustrate the role of~(ii), let us consider the example of $G = \mathrm{SL}(2, \R)$. For $j \in \mathbb{Z}$, let us write $\lambda_{j}$ for the one-dimensional representation of $K=\mathrm{SO}(2, \R)$ in which the rotation with angle $\theta$ acts by multiplication by $e^{ij\theta}$. Then $E_1 = \{\lambda_0\}$ is the set of lowest $K$-types of the spherical principal series, $E_2 = \{ \lambda_1, \lambda_{-1} \}$ is the set of lowest $K$-types for the nonspherical principal series with regular infinitesimal character, and $E_3$ and $E_4$ are each the set of lowest $K$-type for a limit of discrete series representation. So each of $E_3$ and $E_4$ contains   just one element,  either $\lambda_1$ or $\lambda_{-1}$, but (i)-(ii) do not determine which is $\{\lambda_1\}$ and which is $\{\lambda_{-1}\}$. We see that the numbering of $\mathscr{E}$ partly helps keep track of the ``regularity'' of the imaginary part of the infinitesimal character.
\end{example}

\begin{remark} \label{remark-ktypes-ordering} An immediate consequence of~(ii) is that  for fixed $n \geq 2$, the~set $E_n$ cannot contain any of the sets $E_j$ for $j < n$.\end{remark}

\noindent For any positive integer~$n$, let $\widetilde{\mathscr{W}}_n$ be the following subset of $\hatGtemp$: 
\begin{equation*}\widetilde{\mathscr{W}}_n = \left\{\ \pi \in \hatGtemp \ : \ \pi_{|K} \not\supset E_n\ \right\}.\end{equation*}
Consider its annihilator~$J_n$ in~$\Csr(G)$: 
\[J_n = \left\{\ f \in \Csr(G)\ : \ \forall \pi \in \widetilde{\mathscr{W}}_{n}, \  \pi(f)=0\ \right\}.\]
This an ideal of~$\Csr(G)$: it is the ideal denoted by $\mathbf{J}[n]$ in \cite[Section~3.1.1]{Mackey_CK_isom}. Accordingly  $\widetilde{\mathscr{W}}_n$ is a closed subset of $\hatGtemp$. We set $J_{0}=0$ and $\widetilde{\mathscr{W}}_0 = \hatGtemp$.

For any nonnegative integer~$n$, we now set 
\[ {I}_n  = J_0 + \cdots + J_n;\]
this is the ideal denoted by $\mathbf{I}[n]$ in \cite[loc. cit.]{Mackey_CK_isom}.
 In representation-theoretic terms,  set 
 \begin{equation}  \label{eq-def-wn}\mathscr{W}_n= \bigcap \limits_{j \leq n}\widetilde{\mathscr{W}}_j.\end{equation}
 This is  the closed set of those $\pi \in \hatGtemp$ such that $\pi_{|K}$ does not contain any of the sets $E_j$ for $j \leq n$; or equivalently, such that $\LKT(\pi)$ does not contain any of the  $E_j$, $j \leq n$. Then $I_n$ is the annihilator of~$\mathscr{W}_n$:
\[ I_n =  \left\{\ f \in \Csr(G)\ : \ \forall \pi \in \mathscr{W}_n, \  \pi(f)=0\ \right\}.\]
The spectrum of $I_n$ is the open subset of $\hatGtemp$ whose elements are the irreducible tempered representations~$\pi$ such that $\LKT(\pi)$ contains  $E_j$ for some $j \leq n$. Therefore the union of the spectra of all ideals $I_n$ is all of $\hatGtemp$, and as a result the union of all ideals $I_n$ is dense in~$\Csr(G)$. Since it is clear that $I_n \subset I_{n+1}$ for all $n$, we see that $\left(I_n\right)_{n \in \N}$ is an increasing filtration of $\Csr(G)$. The subquotients of this filtration satisfy
\[ \operatorname{Spec}(I_n/I_{n-1}) = \LKT_G^{-1}(E_n), \quad \forall n \geq 1.\]
Switching to the Cartan motion group, define ideals in $\Cs(G_0)$  by setting $J_0^0=0$ and
\[J^0_n = \left\{\ f \in \Cs(G_0)\ : \quad \forall \pi \in \hatGzero \:,\: \left( \pi_{|K} \not\supset E_n \right) \implies \pi(f)=0\ \right\}\]
for $n \geq 1$; then set 
\[ I_n^0 =J_0^0+\cdots +  J_n^0\]
for $n \geq 0$. This is an ideal of $\Cs(G_0)$, and we  define $\mathscr{W}_n^0$ to be the closed subset of $\hatGzero$ with annihilator $I_n^0$. The union of the sets $\mathcal{W}_n^0$ is all of $\hatGzero$, and  $(I_{n}^0)_{n \in \N}$ is an increasing filtration of $\Cs(G_0)$. 

\begin{remark} The filtrations $(I_n)_{n \in \N}$ and $(I_n^0)_{n \in \N}$ are a refinement of the filtrations $(\Ups_k)_{k \in \N}$ and $(\Ups^0_k)_{k \in \N}$ from the previous section: grouping the elements $E$ of $\mathscr{E}$ according to the value of~$\Knorm{E}$, it is clear that each of the ideals $\Ups_k$  can be written as $\sum_{n \leq N_k} I_k$ for a certain integer~$N_k$; and then we have $\Ups_{k}^0 = \sum_{n \leq N_k} I_k^0$. 
\end{remark}

\subsection{Proof of the stratified equivalence property}

 We shall prove that the Mackey embedding $\alpha\colon \Cs(G_0) \longrightarrow \Csr(G)$ is a stratified equivalence morphisms for the two sets of filtrations of $\Cs(G_0)$ and $\Csr(G)$ defined in Sections~\ref{sec-first-filtration} and~\ref{sec-second-filtration}. The proofs are very similar, but neither of the results for one set of filtrations implies the  result for the other set of filtrations. Therefore we shall give parallel statements and proofs.
 
 The first step is to prove that the Mackey embedding respects the two sets of filtrations:

\begin{proposition}\label{prop-compatibility-stratifications}~
\begin{enumerate}[(a)]
\item For every nonnegative integer $k$, we have $\alpha\left(\Ups^0_k\right) \subset \Ups_k$.
\item For every nonnegative integer $n$, we have $\alpha\left(I^0_n\right) \subset I_n$.
\end{enumerate}
\end{proposition}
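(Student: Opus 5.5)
Both inclusions reduce to a statement about irreducible tempered representations. An element $g$ of $\Csr(G)$ lies in $\Ups_k$ (resp. $I_n$) if and only if $\pi(g)=0$ for every $\pi$ in $\mathscr{V}_k$ (resp. $\mathscr{W}_n$). So, given $f \in \Ups^0_k$ (resp. $I_n^0$), we fix such a $\pi$ and must show $\pi(\alpha(f))=0$.

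By Theorem~\ref{th-bhy-classification} we may write $\pi=\pi_{I,\tau,\nu}$. Theorem~\ref{th-chr} then gives $\pi\circ\alpha\simeq\varrho_{I,\tau,\nu}$. The right-hand side decomposes as a direct sum of irreducible representations $\varrho_{I,\mu',\nu}$ of $G_0$, where $\mu'$ runs over the $K_I$-types of $\tau$ with multiplicity. Hence it suffices to show that each irreducible summand $\varrho'$ of $\varrho_{I,\tau,\nu}$ lies in $\mathscr{V}_k^0$ (resp. in $\mathscr{W}_n^0$), since then $\varrho'(f)=0$ for all summands and so $\pi(\alpha(f))=0$.

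The key input is Remark~\ref{rk-restriction-ktypes}(a)(ii): the set of lowest $K$-types of the whole (reducible) representation $\varrho_{I,\tau,\nu}$ is exactly $\LKT(\pi)$. Consequently every $K$-type occurring in any summand $\varrho'$ has height at least $\Knorm{\LKT(\pi)}$. The $K$-types of height exactly $\Knorm{\LKT(\pi)}$ occurring in $\varrho'$ form a subset of $\LKT(\pi)$, and they are the only $K$-types of that height in $\varrho'$.

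For (a): if $\pi\in\mathscr{V}_k$, then $\Knorm{\LKT(\pi)}>R_k$. Every $K$-type of $\varrho'$ therefore has height $>R_k$, so in particular $\Knorm{\LKT(\varrho')}>R_k$ and $\varrho'\in\mathscr{V}^0_k$.

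For (b) the difficulty, and the step I expect to need care, is identifying $\mathscr{W}_n^0$ concretely. I would show, mirroring the definition of $\mathscr{W}_n$, that the closed set with annihilator $I_n^0=\sum_{j\le n}J_j^0$ is the intersection of the hull sets $\{\varrho : \varrho_{|K}\not\supset E_j\}$ for $j\le n$. Here "$\not\supset E_j$" means that the restriction to $K$ does not contain all elements of $E_j$; this is how the annihilator of $J_j^0$ is defined. The identification holds because the hull of a sum of ideals is the intersection of the hulls, and the hull of $J_j^0$ is the closed set above, closed by Lemma~\ref{lem-Ktype-ouvert}.

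It then remains to check that if $\pi_{|K}$ contains no $E_j$ with $j\le n$, then no summand $\varrho'$ does. Suppose some $\varrho'_{|K}\supset E_j$ with $j\le n$. Then $\pi_{|K}\supset E_j$, because $\pi$ and $\varrho_{I,\tau,\nu}$ have the same restriction to $K$: both live on $\Ind H_\tau$ with the same $K$-action. This contradicts $\pi\in\widetilde{\mathscr{W}}_j$.

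So the argument only uses $K$-type containment, and the real content is the elementary hull computation. In (b) the lowest-$K$-type remark is not even needed: the equality of $K$-restrictions suffices, since $\widetilde{\mathscr{W}}_j$ is defined by containment in $\pi_{|K}$ and not only in $\LKT(\pi)$.
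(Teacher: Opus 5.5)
Your proof is correct, and it takes a more direct route than the paper.

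\textbf{The paper's route.} The paper does not argue summand by summand at level $k$. It first proves the sharper Proposition~\ref{prop-technical-compatibility}. Let $\pi$ be in $\mathscr{V}_k$ (resp.\ $\mathscr{W}_n$), let $\varrho$ be its Mackey partner, and let $f$ lie in the \emph{next} ideal $\Ups^0_{k+1}$ (resp.\ $I^0_{n+1}$). Then $\pi(\alpha(f))=0$ if and only if $\varrho(f)=0$. This rests on three facts:
\begin{itemize}
\item Lemma~\ref{lem-technical-length}: every summand $\varrho_{I,\lambda,\nu}$ with $\lambda\neq\mu$ has strictly larger lowest-$K$-type height.
\item The summand $\varrho_{I,\mu,\nu}$ occurs with multiplicity one.
\item The Mackey bijection preserves lowest $K$-types. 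This gives $\varrho\in\mathscr{V}^0_k$, hence $\varrho(f)=0$ for $f\in\Ups^0_k\subset\Ups^0_{k+1}$.
\end{itemize}

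\textbf{Your route.} You replace all of this with one observation: $\varrho_{I,\tau,\nu}$ and $\pi_{I,\tau,\nu}$ have the same restriction to $K$ in the compact picture. So every irreducible summand of $\pi\circ\alpha$, the Mackey summand included, already lies in $\mathscr{V}^0_k$ (resp.\ $\mathscr{W}^0_n$), and $f$ kills all of them. For the inclusion alone this is cleaner and uses strictly less. In (b) you do not even need the hull computation or closedness: write $f=\sum_{j\le n} f_j$ with $f_j\in J^0_j$. A summand whose $K$-restriction contains no $E_j$ kills each $f_j$ by the very definition of $J^0_j$.

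\textbf{What the paper's detour buys.} Proposition~\ref{prop-technical-compatibility} is reused in Lemma~\ref{lem-technical-strat}, the key step toward the spectrum-preserving property. There $\LKT(\pi)=E_n$ and $f\in I^0_n$, so the Mackey summand is \emph{not} killed. One must know that all the \emph{other} summands are killed. Your $K$-restriction argument cannot see this, and Lemma~\ref{lem-technical-length} is genuinely needed at that stage.
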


This will be an easy consequence of the following more precise statement about the composition of any irreducible tempered representation $\pi$ of $G$ (viewed as a representation of $\Csr(G)$) with the Mackey embedding. The statement uses the notation of Section~\ref{sec-filtrations}, and in particular the families $(\mathcal{V}_k)_{k \in \N}$ and $(\mathcal{W}_n)_{n \in \N}$ of closed subsets of $\hatGtemp$ defined in~\eqref{eq-def-vk} and~\eqref{eq-def-wn}.

\begin{proposition}\label{prop-technical-compatibility}
Let $\pi$ be an irreducible tempered representation of~$G$, and let $\varrho$ be an irreducible representation of~$G_0$ whose equivalence class corresponds to~$\pi$ under the Mackey bijection. 
\begin{enumerate}[(a)]
    \item  Let $k \geq 0$ be an integer. Suppose the equivalence class of~$\pi$ belongs to~$\mathcal{V}_k$.  For every element $f$ in the ideal $\Ups_{k+1}^0$ of $\Cs(G_0)$, we have $\pi\left(\alpha(f)\right) = 0$ if and only if $\varrho(f)=0$. 
      \item  Let $n \geq 0$ be an integer. Suppose the equivalence class of~$\pi$ belongs to~$\mathcal{W}_n$. For every element $f$ in the ideal $I_{n+1}^0$ of $\Cs(G_0)$, we have $\pi\left(\alpha(f)\right) = 0$ if and only if $\varrho(f)=0$.  
\end{enumerate}  
\end{proposition}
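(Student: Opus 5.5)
By Remark~\ref{rk-restriction-ktypes}(b), $\pi\circ\alpha$ decomposes as a direct sum $\varrho \oplus \varrho'$, where $\varrho$ is the Mackey partner of $\pi$ (with multiplicity one) and $\varrho'$ is a direct sum of other irreducible representations of $G_0$. Hence $\pi(\alpha(f))=0$ if and only if both $\varrho(f)=0$ and $\varrho'(f)=0$. The claim therefore reduces to showing that every irreducible summand $\varrho_1$ of $\varrho'$ vanishes on the relevant ideal, $\Ups^0_{k+1}$ in case (a) and $I^0_{n+1}$ in case (b). By definition of these ideals as annihilators, this amounts to showing that each such $\varrho_1$ lies in $\mathscr{V}^0_{k+1}$, respectively in $\mathscr{W}^0_{n+1}$.

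More precisely, write $\pi=\pi_{I,\tau,\nu}$, so that $\pi\circ\alpha\simeq\varrho_{I,\tau,\nu}$. Let $\mu$ be the lowest $K_I$-type of $\tau$. Then $\varrho_{I,\tau,\nu}$ is the direct sum of the $\varrho_{I,\mu',\nu}$, where $\mu'$ runs over the $K_I$-types of $\tau$ with multiplicity. Set $E=\LKT(\pi)$. By Remark~\ref{rk-restriction-ktypes}(a)(ii)--(iii), every $K$-type of $\varrho_{I,\tau,\nu}$ has height at least $\Knorm{E}$, and all $K$-types of height exactly $\Knorm{E}$ (which are the elements of $E$) occur inside the summand $\varrho=\varrho_{I,\mu,\nu}$, with multiplicity one there. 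It follows that every other summand $\varrho_1$ has all its $K$-types of height strictly greater than $\Knorm{E}$.

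For (a), the hypothesis $\pi\in\mathscr{V}_k$ gives $\Knorm{E}>R_k$. Since the heights take values in the discrete set $\Lambda$, this means $\Knorm{E}\geq R_{k+1}$. Hence every $\varrho_1$ has $\Knorm{\LKT(\varrho_1)}>R_{k+1}$, so $\varrho_1\in\mathscr{V}^0_{k+1}$ and $\varrho_1$ kills $\Ups^0_{k+1}$. One should also note that the lowest $K$-type height of $\varrho_1$ lies in $\Lambda$, because $\varrho_1$ is itself the Mackey partner of some tempered representation, by \eqref{eq-mackey-preserves-lkt}. The converse direction of the equivalence is immediate, since $\varrho$ is a subrepresentation of $\pi\circ\alpha$.

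For (b), the hypothesis $\pi\in\mathscr{W}_n$ says that $E$ contains no $E_j$ with $j\le n$, so $E=E_m$ for some $m\ge n+1$. We must check that no $\varrho_1$ contains any $E_j$ with $j\le n+1$ in its restriction to $K$. This is the main obstacle, because $\varrho_1$ has larger $K$-types and so could a priori contain a set $E_j$ of larger height. The ordering of $\mathscr{E}$ handles this: $j\le n+1\le m$ forces $\Knorm{E_j}\le\Knorm{E_m}$ by condition (i) of the labelling. Every $K$-type of $\varrho_1$, however, has height strictly greater than $\Knorm{E_m}$, so $\varrho_1$ contains no element of $E_j$ at all. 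Hence $\varrho_1\in\mathscr{W}^0_{n+1}$, i.e.\ $\varrho_1$ annihilates $J^0_j$ for each $j\le n+1$, and therefore annihilates $I^0_{n+1}$. Here one must make sure that $\mathscr{W}^0_{n+1}$, defined as the hull of $I^0_{n+1}$, is exactly the set of representations whose restriction to $K$ contains none of $E_1,\dots,E_{n+1}$; this follows because an irreducible representation vanishes on a sum of ideals precisely when it vanishes on each of them. The converse direction is again trivial, since $\varrho$ is a summand of $\pi\circ\alpha$.
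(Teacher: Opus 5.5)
Your proposal is correct and follows essentially the same route as the paper. Both arguments decompose $\pi\circ\alpha\simeq\varrho_{I,\tau,\nu}$ into the summands $\varrho_{I,\lambda,\nu}$, and use Remark~\ref{rk-restriction-ktypes} to show that every summand other than $\varrho_{I,\mu,\nu}$ has all its $K$-types of height strictly above $\Knorm{\LKT(\pi)}$ (the paper isolates this as Lemma~\ref{lem-technical-length}). They then conclude via discreteness of $\Lambda$ in (a), and via the height-compatible labelling of $\mathscr{E}$ in (b), where your phrasing in terms of $\varrho_1|_K$ matches the definition of $J^0_j$ slightly more directly than the paper's.
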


\begin{proof}[Proof of Proposition~\ref{prop-compatibility-stratifications}, given Proposition~\ref{prop-technical-compatibility}] 
For~(a), fix an integer $k \geq 0$ and an element $f \in \Ups_{k}^0$. We want to prove that $\alpha(f) \in \Ups_k$, i.e. that if $\pi$ is an irreducible representation of~$G$ whose equivalence class belongs to $\mathcal{V}_k$, then $\pi\left(\alpha(f)\right)=0$. 

Fix such a representation~$\pi$, and let~$\varrho$ be an irreducible representation of~$G_0$ whose equivalence class corresponds to that of~$\pi$ under the Mackey bijection. Since the Mackey bijection preserves lowest $K$-types, we have $\LKT(\varrho)=\LKT(\pi)$, thus $\varrho \in \mathcal{V}_k^0$ by definition of $\mathcal{V}_k^0$, and therefore $\varrho(f)=0$ since $f$ belongs to the annihilator $\Ups_k^0$ of $\mathcal{V}^0_k$. But we also have $f \in \Ups^0_{k+1}$ since $\Ups_k^0 \subset \Ups_{k+1}^0 $; by Proposition~\ref{prop-technical-compatibility} we deduce $\pi\left(\alpha(f)\right)=0$, q.e.d. 

The proof of~(b) is exactly the same using the fact that $\mathcal{W}^0_n$ is mapped to $\mathcal{W}_n$ under the Mackey bijection, and the definitions of $I_n^0$ and $I_n$ as the annihilators of $\mathcal{W}_n^0$ and $\mathcal{W}_n$. 
\end{proof}

\begin{proof}[Proof of Proposition~\ref{prop-technical-compatibility}]

Let $\pi$ be an irreducible tempered representation of~$G$. Up to unitary equivalence we may assume $\pi=\pi_{I, \tau, \nu}$ where $I$ is a subset of~$S$, $\tau$ is a tempiric representation of~$M_I$, and $\nu$ is an element of $\aiplus$.

\begin{lemma}\label{lem-technical-length} Let $\mu$ be the lowest $K_I$-type of~$\tau$. For every irreducible representation~$\lambda$ of $K_I$ which occurs in $\tau|_{K_I}$ and is not equivalent to~$\mu$, the irreducible representation $\varrho_{I, \lambda, \nu}$ of~$G_0$ satisfies $\Knorm{\LKT(\varrho_{I, \lambda, \nu})} > \Knorm{\LKT(\pi)}$. 
\end{lemma}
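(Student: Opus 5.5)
We will compare lowest $K$-types by using the Mackey bijection on the Levi subgroup $M_I$ and then inducing from $K_I$ to $K$. The key facts are these. Write $\pi = \pi_{I,\tau,\nu}$. Then $\LKT(\pi)=\LKT_{G_0}(\varrho_{I,\mu,\nu})$, by \eqref{eq-mackey-preserves-lkt} together with Remark~\ref{rk-restriction-ktypes}(a). Moreover, the $K$-types of $\varrho_{I,\lambda,\nu}$ are exactly those of $\Ind_{K_I}^K\lambda$, since in the compact picture $\varrho_{I,\lambda,\nu}|_K=\Ind_{K_I}^K\lambda$. So the statement is a purely compact-group assertion:
\[ \min\{\Knorm{\delta} : \delta\subset \Ind_{K_I}^K\lambda\} \;>\; \min\{\Knorm{\delta}: \delta\subset\Ind_{K_I}^K\mu\}, \]
for every $K_I$-type $\lambda\not\simeq\mu$ of the tempiric representation $\tau$.

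\textbf{Step 1 (Levi level).} First establish the analogous inequality for $K_I$-heights. Since $\tau$ is tempiric, Theorem~\ref{th-vogan-tempiric}(a) says $\mu$ is its unique lowest $K_I$-type. Hence every other $K_I$-type $\lambda$ of $\tau$ satisfies $\lVert\lambda\rVert_{\widehat{K_I}}>\lVert\mu\rVert_{\widehat{K_I}}$. Strictness holds precisely because the lowest $K_I$-type is unique: any $\lambda$ of equal height would itself be lowest.

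\textbf{Step 2 (transfer to $K$).} Next relate $K$-heights of constituents of $\Ind_{K_I}^K\lambda$ to $K_I$-heights. Here I would invoke Vogan's description of lowest $K$-types of induced representations, as used in \cite[proof of Proposition~4.1]{Mackey_bijection} and \cite[Lemma~7.4.2]{CHR}. The lowest $K$-types of $\Ind_{K_I}^K\lambda$ are the $K$-types of the form $\delta$ with highest weight $\lambda$-highest weight plus $2\rho(\mathfrak{u}\cap\mathfrak{p})$-type corrections, suitably made dominant. Their common height is an increasing function of the $K_I$-height of $\lambda$ in the relevant sense. In other words, Vogan's bottom-layer/$\mathfrak{u}$-shift argument gives
\[ \min_{\delta\subset\Ind\lambda}\Knorm{\delta} \;\geq\; \Phi\bigl(\lVert\lambda\rVert_{\widehat{K_I}}\bigr) \]
with equality attained for $\lambda=\mu$, where $\Phi$ is strictly increasing. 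Alternatively, and more robustly, one can argue through the $G$ side. Parabolically induce the tempered (but not tempiric) representation of $M_I$ whose lowest $K_I$-type is $\lambda$. This is legitimate by Theorem~\ref{th-vogan-tempiric}(b): there is a tempiric $\tau(\lambda)$ with $\LKT=\{\lambda\}$. Then \eqref{eq-mackey-preserves-lkt} gives $\LKT(\varrho_{I,\lambda,\nu})=\LKT(\pi_{I,\tau(\lambda),\nu})$. This reduces the claim to comparing the lowest $K$-type heights of $\pi_{I,\tau(\lambda),\nu}$ and $\pi_{I,\tau,\nu}$, for which Vogan's formula ($\Knorm{\cdot}$ of lowest $K$-types of an induced representation in terms of the $\lambda$-parameter of the inducing data) applies directly.

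\textbf{Step 3 (conclude).} Combining the two steps, every $K$-type of $\varrho_{I,\lambda,\nu}$ has height strictly exceeding $\Knorm{\LKT(\pi)}$, which is the statement of Lemma~\ref{lem-technical-length}. The main obstacle is Step~2. One must make sure the monotonicity really is strict, and that it is uniform with respect to the height normalization $\lVert\mu+2\rho_K\rVert$ versus $\lVert\mu+2\rho_{K_I}\rVert$. I would handle this with Vogan's $\lambda_a$-parameter: the height of a lowest $K$-type of $\pi_{I,\tau',\nu}$ is $\lVert\lambda_a(\tau')+\text{shift}\rVert$, with the shift independent of $\tau'$, and $\lambda_a$ is orthogonal to the shift. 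Strict increase in $\lVert\lambda_a\rVert$ would then give strict increase in height.
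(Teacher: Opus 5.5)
Your Step~2 is false, and the whole strategy of going through $K_I$-heights fails with it. No strictly increasing $\Phi$ can control the minimal $K$-height of $\Ind_{K_I}^K\lambda$ in terms of $\lVert\lambda\rVert_{\widehat{K}_I}$; that minimal height is not even a function of the $K_I$-height.

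Here is a counterexample. Take $G=\mathrm{SL}(3,\R)$ and $I$ a single simple root, so that $K_I\cong\mathrm{O}(2)$ sits in $K=\mathrm{SO}(3)$ as $S(\mathrm{O}(2)\times\mathrm{O}(1))$. Write $V_\ell$ for the $(2\ell+1)$-dimensional representation of $\mathrm{SO}(3)$, and $\sigma_n$ for the two-dimensional $\mathrm{O}(2)$-type with weights $\pm n$.
\begin{itemize}
\item The trivial character and $\det$ both have $K_I$-height $0$. But $\Ind_{K_I}^K(\mathrm{triv})=\bigoplus_{\ell\ \mathrm{even}}V_\ell$ has lowest $K$-type $V_0$, while $\Ind_{K_I}^K(\det)=\bigoplus_{\ell\ \mathrm{odd}}V_\ell$ has lowest $K$-type $V_1$. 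These have different heights.
\item $\sigma_1$ has strictly larger $K_I$-height than $\det$, yet $\Ind_{K_I}^K\sigma_1$ again has lowest $K$-type $V_1$. So strict monotonicity fails.
\end{itemize}

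Every $K_I$-type is the lowest $K_I$-type of some tempiric representation (Theorem~\ref{th-vogan-tempiric}(b)), so this also defeats your ``$G$-side'' variant. By \eqref{eq-mackey-preserves-lkt}, that variant is literally the same compact-group question. Concretely, $\pi_{I,\tau(\det),\nu}$ and $\pi_{I,\tau(\sigma_1),\nu}$ have lowest $K$-types of equal height, although $\det$ and $\sigma_1$ have different $K_I$-heights. The $\lambda_a$-plus-orthogonal-shift formula you invoke is not a result you can cite. Even granting it, nothing in Step~1 yields $\lVert\lambda_a(\tau(\lambda))\rVert>\lVert\lambda_a(\tau)\rVert$. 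Step~1 is true, but it carries no usable information. The hypothesis that $\lambda$ actually occurs in $\tau$ has to be used directly, not only through $\lambda$'s height.

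The missing idea is an inclusion of $K$-modules rather than a comparison of heights.
\begin{itemize}
\item You already noted that $\varrho_{I,\lambda,\nu}|_K=\Ind_{K_I}^K\lambda$. Since $\lambda$ occurs in $\tau|_{K_I}$, this is a summand of $\Ind_{K_I}^K(\tau|_{K_I})$.
\item $\Ind_{K_I}^K(\tau|_{K_I})$ is the common restriction to $K$ of $\pi$ and of $\varrho_{I,\tau,\nu}\simeq\bigoplus_{\lambda}\mult(\lambda,\tau)\,\varrho_{I,\lambda,\nu}$, cf.~\eqref{eq-dec-isotyp}.
\item Hence every $K$-type of $\varrho_{I,\lambda,\nu}$ is a $K$-type of $\pi$, and so has height at least $\Knorm{\LKT(\pi)}$.
\item For strictness, use Remark~\ref{rk-restriction-ktypes}(a): the lowest $K$-types of $\pi$ all occur in the summand $\varrho_{I,\mu,\nu}$, which appears with multiplicity one. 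By Vogan, they occur in $\pi$ with multiplicity one. So none of them can also occur in a summand $\varrho_{I,\lambda,\nu}$ with $\lambda\not\simeq\mu$.
\end{itemize}
This is exactly the paper's argument; it needs no comparison of heights at the Levi level.
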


\begin{proof}
 For every irreducible representation $\lambda$ of $K_I$, let us write  $\mult(\tau, \lambda)$ for the multiplicity with which $\lambda$ occurs in $\tau|_{K_I}$.  Let us consider the (possibly reducible) representation  $\varrho^\ast=\varrho_{I, \tau, \nu}$ of~$G_0$. It is immediate from the definitions that 
\begin{equation}\label{eq-dec-isotyp} \varrho^\ast \simeq \bigoplus_{\lambda \in \hatKI} \mult(\lambda, \tau) \cdot \varrho_{I,\lambda, \nu}.\end{equation}
By Remark~\ref{rk-restriction-ktypes}, the lowest $K$-types of $\pi$ and $\varrho^\ast$ are the same, and occur only in the summand corresponding to $\lambda = \mu$. Therefore if $\mult(\lambda, \tau) \neq 0$ and $\lambda \neq \mu$, the $K$-types that appear in~$\varrho_{I, \lambda, \mu}$ must have height strictly larger than $\Knorm{\LKT(\pi)}$, and the lemma follows. \end{proof}

To continue the proof of Proposition~\ref{prop-technical-compatibility}, let us consider the composition~$\pi \circ \alpha$ of $\pi$ with the Mackey embedding, viewed as a representation of~$\Cs(G_0)$. 
Then the spectral property~\eqref{eq-spectral-CHR} says $\pi \circ \alpha \simeq \varrho_{I, \tau, \nu}=\varrho^\ast$, and together with the decomposition~\eqref{eq-dec-isotyp} this gives
\begin{equation}\label{eq-compose-ups} \pi \circ \alpha \simeq \bigoplus_{\lambda \in \hatKI} \mult(\lambda, \tau) \cdot \varrho_{I,\lambda, \nu}.\end{equation}
For part~(a) let us assume that the equivalence class of $\pi$ belongs to $\mathcal{V}_k$, in other words, that $\Knorm{\LKT(\pi)} \geq R_{k+1}$. Then on the right-hand side of \eqref{eq-compose-ups}, whenever $\lambda \in \hatKI$ is such that $\mult(\lambda, \tau) \neq 0$ and $\lambda \neq \mu$, we must have $\Knorm{\LKT(\varrho_{I, \lambda, \nu})} > R_{k+1}$ by Lemma~\ref{lem-technical-length}; therefore $\Knorm{\LKT(\varrho_{I, \lambda, \nu})} \geq R_{k+2}$. By definition of~$\Ups_{k+1}^0$, we see that if $f \in \Ups_{k+1}^0$, then all nonzero summands on the right-hand side of \eqref{eq-compose-ups} must vanish on~$f$ except the summand corresponding to $\lambda=\mu$, which appears with multiplicity one, and is equivalent to~$\varrho$ by definition of the Mackey bijection. Therefore $(\pi \circ \alpha)(f)=0$ if and only if $\varrho(f)=0$, as claimed. 

For part~(b), let us assume that the equivalence class of $\pi$ belongs to $\mathcal{W}_n$, in other words, that $\LKT(\pi)$ contains none of the $E_j$ for $j \leq n$. Then $\LKT(\pi)$ must be $E_m$ for an integer $m > n$. Now
whenever $\lambda \in \hatKI$ is such that $\mult(\lambda, \tau) \neq 0$ and $\lambda \neq \mu$,  by Lemma~\ref{lem-technical-length} we must have $\Knorm{\LKT(\varrho_{I, \lambda, \nu})} > \Knorm{\LKT(\pi)}$.
Since the ordering of the sets $E_j$ is by increasing height, this implies that $\LKT(\varrho_{I, \lambda, \nu})$ can contain none of the $E_j$ for $j \leq m$; since $m >n$, this implies that all representations $\varrho_{I, \lambda, \nu}$ vanish on the ideal $I_{n+1}$ by definition. The conclusion follows as in~(a), and Proposition~\ref{prop-technical-compatibility} is proved. 
\end{proof}

We have now proved that the Mackey embedding $\alpha\colon \Cs(G_0) \longrightarrow \Csr(G)$ preserves the filtrations of Sections~\ref{sec-first-filtration} and~\ref{sec-second-filtration}, and turn to the proof that it induces spectrum-preserving morphisms on subquotients. For $k, n \geq 1$, write  
\begin{align*}
\alpha_k^\Ups\colon \Ups_k^0/\Ups_{k-1}^0 & \longrightarrow  \Ups_k/\Ups_{k-1}\\
\intertext{and}
\alpha_n\colon I_n^0/I_{n-1}^0 & \longrightarrow  I_n/I_{n-1}
\end{align*}
for the $\Cs$-morphisms induced by $\alpha$.
\bigskip
\begin{proposition}\label{prop-stratified-embedding}~
\begin{enumerate}[(a)]
    \item For every integer $k \geq 1$, the morphism $\alpha_k^\Ups$ is spectrum-preserving.
    \item For every integer $n \geq 1$, the morphism $\alpha_n$ is spectrum-preserving.
\end{enumerate}
\end{proposition}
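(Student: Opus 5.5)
The plan is to prove~(a) and~(b) in parallel, reducing everything to Proposition~\ref{prop-technical-compatibility}. Fix $k\geq 1$ (resp. $n\geq1$). By Section~\ref{sec-basics-cstar-1}, the spectrum of $\Ups_k/\Ups_{k-1}$ is identified with the locally closed set $\mathcal{V}_{k-1}\setminus\mathcal{V}_k$ of $\pi\in\hatGtemp$ with $\Knorm{\LKT(\pi)}=R_k$, and similarly on the $G_0$ side. In case~(b), the spectrum of $I_n/I_{n-1}$ is $\LKT_G^{-1}(E_n)$, and that of $I_n^0/I_{n-1}^0$ is $\LKT_{G_0}^{-1}(E_n)$. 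Since the Mackey bijection $\mathscr{M}$ preserves lowest $K$-types by~\eqref{eq-mackey-preserves-lkt}, it restricts to bijections between these spectra. It therefore suffices to show the following: for each primitive ideal $P=\Ker\pi$ of the subquotient $B=\Ups_k/\Ups_{k-1}$ (resp. $I_n/I_{n-1}$), the preimage $\alpha_k^{\Ups,-1}(P)$ (resp. $\alpha_n^{-1}(P)$) is \emph{exactly} the primitive ideal $\Ker\varrho$ of $A=\Ups_k^0/\Ups_{k-1}^0$, where $\varrho=\mathscr{M}^{-1}(\pi)$.

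The key step is this equality of kernels. Let $\pi\in\operatorname{Spec}(B)$. Then $\pi\in\mathcal{V}_{k-1}$ (resp. $\pi\in\mathcal{W}_{n-1}$, since $\LKT(\pi)=E_n$ contains no $E_j$ with $j<n$ by Remark~\ref{remark-ktypes-ordering} together with the ordering by height). Apply Proposition~\ref{prop-technical-compatibility} with index $k-1$ (resp. $n-1$). It gives, for every $f\in\Ups_k^0$ (resp. $I_n^0$), that $\pi(\alpha(f))=0$ if and only if $\varrho(f)=0$. Passing to quotients is legitimate: $\alpha(\Ups^0_{k-1})\subset\Ups_{k-1}$ by Proposition~\ref{prop-compatibility-stratifications}, and both $\pi$ and $\varrho$ vanish on the lower ideals. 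Hence the preimage of $\Ker\pi$ under the induced map equals $\Ker\varrho$, which is a primitive ideal of $A$.

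Next I will check uniqueness in condition~(i) of Definition~\ref{def-baumnistor}. Suppose a primitive ideal $Q=\Ker\varrho'$ of $A$ contains $\Ker\varrho$. Then $\varrho'$ lies in the closure of $\{\varrho\}$ in $\operatorname{Spec}(A)$. I need $\varrho'=\varrho$, i.e. that points of the spectrum of each subquotient are closed, or at least that $\Ker\varrho$ is maximal among primitive ideals. This is where I expect the main obstacle. The spectra are non-Hausdorff in general, and I must verify that the subquotient spectrum is $T_1$. My first attempt is to use the explicit parametrization of $\hatGzero\simeq\mathfrak{p}^\ast\doublequo K$ and its topology, restricted to representations with fixed lowest $K$-type height. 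Suppose $\varrho_{I,\mu,\nu}\in\overline{\{\varrho_{I',\mu',\nu'}\}}$. Closures in $\mathfrak{p}^\ast\doublequo K$ force $\nu=\nu'$, since the constant sequence is involved and closure in the $\adomstar$-coordinate is Hausdorff. A constant net at $\nu$ only converges to pairs $(\nu,\mu'')$ with $\operatorname{Hom}_{K_\nu}(\mu',\mu'')\neq0$, so $\mu''=\mu'$ because both are irreducible representations of the same group $K_\nu$. So in fact $\hatGzero$ itself is $T_1$, and uniqueness is automatic; I will state this via Theorem~\ref{th-mackey-G0}(c). If this turns out to be delicate, the fallback is to note that points of $\hatGzero$ are closed because $G_0$ is type~I with CCR (liminal) $\Cs$-algebra, as every irreducible representation of $G_0$ is admissible. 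Then every primitive ideal of $\Cs(G_0)$, and hence of its subquotients, is maximal.

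Finally, for condition~(ii): the resulting map $\operatorname{Prim}(B)\to\operatorname{Prim}(A)$ is $\Ker\pi\mapsto\Ker\mathscr{M}^{-1}(\pi)$. It is a bijection because $\mathscr{M}$ restricts to a bijection between the two subquotient spectra, as noted in the first step, and because primitive ideals correspond bijectively to equivalence classes of irreducible representations for these type~I algebras.
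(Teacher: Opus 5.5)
Your proposal is correct and follows essentially the paper's proof. It proves the kernel equality $\alpha_n^{-1}(\Ker\pi)=\Ker\varrho$ by applying Proposition~\ref{prop-technical-compatibility} at index $n-1$ (resp.\ $k-1$), gets uniqueness from the maximality of primitive ideals in the subquotients of $\Cs(G_0)$, and gets bijectivity from the fact that the Mackey bijection preserves lowest $K$-types. The only difference is that you first derive the $T_1$ property directly from Baggett's topology on $\mathfrak{p}^\ast\doublequo K$, whereas the paper deduces maximality from the liminality of $\Cs(G_0)$, which is your fallback.
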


Recall that the spectra of $\Ups_{k}^0/\Ups_{k-1}^0$ and $\Ups_{k}/\Ups_{k-1}$ can be identified with the subsets of $\hatGzero$ and $\hatGtemp$ comprising the representations whose lowest $K$-types have height exactly~$R_k$. Since the Mackey bijection preserves lowest $K$-types, it induces a bijection betwen these spectra. The same applies to the spectra of $I^0_n/I^0_{n-1}$ and $I_{n}/I_{n-1}$, which can be identified with the subsets of~$\hatGzero$ and $\hatGtemp$ comprising the representations whose set of lowest $K$-types is exactly~$E_n$.

The key fact to prove the proposition is: 

\begin{lemma}\label{lem-technical-strat}
    Let $n \geq 1$ be an integer and let $\pi$ be an irreducible tempered representation of~$G$ with $\LKT(\pi)=E_n$. 
    Let $\varrho$ be an irreducible unitary representation of~$G_0$ whose equivalence class corresponds to that of~$\pi$ under the Mackey bijection. 
    \begin{enumerate}[(a)]
    \item Let $V$ be the primitive ideal in~$I_n/I_{n-1}$ attached to~$\pi$, and let $V_0$ be the primitive ideal in~$I_n^{0}/I_{n-1}^0$ attached to~$\varrho$. We have the equality 
    $V_0 = \alpha_{n}^{-1}(V)$. 
    \item Let $k$ be the integer such that $\Knorm{E_n}=R_k$, let $V$ be the primitive ideal in $\Ups_k/\Ups_{k-1}$ attached to~$\pi$, and let~$V_0$ be the primitive ideal in $\Ups_{k}^0/\Ups_{k-1}^0$ attached to~$\varrho$.
    We have $V_0 = (\alpha_k^\Ups)^{-1}(V)$. 
    \end{enumerate}
\end{lemma}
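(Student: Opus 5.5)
We argue part~(a); part~(b) is identical after replacing $I_n$, $I_n^0$, $\mathcal{W}_{n-1}$ by $\Ups_k$, $\Ups_k^0$, $\mathcal{V}_{k-1}$. The primitive ideal $V\subset I_n/I_{n-1}$ is the image in the quotient of the annihilator of $\pi$ in $I_n$, and similarly for $V_0$ and $\varrho$. For a class $[f]\in I_n^0/I_{n-1}^0$ with representative $f\in I_n^0$, we therefore have $[f]\in\alpha_n^{-1}(V)$ if and only if $\pi(\alpha(f))=0$. Likewise $[f]\in V_0$ if and only if $\varrho(f)=0$. This uses the identification of Section~\ref{sec-basics-cstar-1}: $\pi$ vanishes on $I_{n-1}$ and $\varrho$ vanishes on $I_{n-1}^0$, so both descend to the subquotients. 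Both conditions are independent of the choice of representative. For $\varrho$ this is immediate. For $\pi\circ\alpha$ it follows from Proposition~\ref{prop-compatibility-stratifications}(b), since $\alpha(I^0_{n-1})\subset I_{n-1}$ and $\pi$ kills $I_{n-1}$.

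The lemma therefore reduces to a pointwise statement: for every $f\in I_n^0$, we must show $\pi(\alpha(f))=0$ if and only if $\varrho(f)=0$. This is precisely Proposition~\ref{prop-technical-compatibility}(b) applied with index $n-1$. We only need to check that the class of $\pi$ lies in $\mathcal{W}_{n-1}$, that is, that $\LKT(\pi)=E_n$ contains none of the sets $E_j$ for $j\le n-1$. This is Remark~\ref{remark-ktypes-ordering}. In more detail, if $E_j\subset E_n$, then both sets have the same height. Since $E_j\neq E_n$ for $j\neq n$, the inclusion would be strict, so $E_j$ would have strictly smaller cardinality. Condition~(ii) of the labelling would then force $j>n$. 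Hence no $E_j$ with $j<n$ is contained in $E_n$.

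For~(b) the corresponding check is easier. We need $\pi\in\mathcal{V}_{k-1}$, i.e.\ $\Knorm{\LKT(\pi)}=R_k>R_{k-1}$, which holds by definition. Proposition~\ref{prop-technical-compatibility}(a) with index $k-1$ then gives the equivalence for every $f\in\Ups_k^0$.

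The only delicate step is the bookkeeping of the identification of primitive ideals of subquotients with kernels of the descended representations. In particular one must make sure that $\varrho$, viewed in $I_n^0/I_{n-1}^0$, is the representation attached to the class corresponding to $\pi$, using \eqref{eq-mackey-preserves-lkt}. Once this identification is in place, the lemma follows directly from Proposition~\ref{prop-technical-compatibility}.
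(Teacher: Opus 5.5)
Your proposal is correct and follows the paper's proof. You reduce to the pointwise statement of Proposition~\ref{prop-technical-compatibility} with index $n-1$ (resp.\ $k-1$), check that $\pi\in\mathcal{W}_{n-1}$ via Remark~\ref{remark-ktypes-ordering} (resp.\ $\pi\in\mathcal{V}_{k-1}$), and then read the resulting equivalence as an equality of kernels on the subquotients; your extra bookkeeping (well-definedness via Proposition~\ref{prop-compatibility-stratifications}, and deriving Remark~\ref{remark-ktypes-ordering} from labelling condition~(ii)) is accurate and only makes explicit what the paper leaves implicit.
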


 \begin{proof}
   For~(a), let $\varphi$ be an element of~$\Ups_{n}^0/\Ups_{n-1}^0$, and let $f \in \Ups_{n}^0$ be an element
which maps to~$\varphi$ in the projection $\Ups_n^0 \longrightarrow \Ups_n^0/\Ups_{n-1}^0$.
By definition $\LKT(\pi)$ is $E_n$,
and therefore it contains none of the sets $E_j$ for $j \leq n-1$
(see Remark~\ref{remark-ktypes-ordering});
in the notation of \S~\ref{sec-second-filtration} this means $\pi \in \mathscr{W}_{n-1}$.
Since $f \in I_n$, we may apply Proposition~\ref{prop-technical-compatibility}
and see that $ \pi\left(\alpha(f)\right)=0$ if and only if $ \varrho(f)=0.$
 In the notation of Section~\ref{sec-basics-cstar-1} concerning representations of subquotients, 
this means
\begin{equation*}
\pi(\alpha_n(\varphi))=0 
\iff
\varrho(\varphi)=0.
\end{equation*}
By definition~$V$ is the kernel of~$\pi$,
and $V_0$ is the kernel of $\varrho$;
this proves assertion~(a). 

The proof of~(b) is exactly the same using the fact that $\Knorm{\LKT(\pi)} = R_k > R_{k-1}$, and therefore $\pi \in \mathcal{V}_{k-1}$. This concludes the proof of Lemma~\ref{lem-technical-strat}.
 \end{proof}

Let us now prove Proposition~\ref{prop-stratified-embedding}. We shall prove~(b); part~(a) is obtained by the exact same argument.

The first thing to be checked is that in the notation of Lemma~\ref{lem-technical-strat},
the only primitive ideal of $\Ups_k^0/\Ups_{k-1}^0$
that contains ($\alpha^\Ups_k)^{-1}(V)$
is $V_0$.
Let $W_0$ be a primitive ideal in $\Ups_k^0/\Ups_{k-1}^0$ 
that contains the ideal $(\alpha^\Ups_k)^{-1}(V)$.
By Lemma~\ref{lem-technical-strat}  this means $V_0 \subset W_0$.
But the primitive ideals of~$\Ups^0_k/\Ups_{k-1}^0$
are all maximal among closed ideals:
this follows from  \cite[\S\,4.4.1]{Dixmier},
because the $\Cs$-algebra $\Ups^0_k/\Ups_{k-1}^0$
is liminal in the sense of \cite[\S\,4.3]{Dixmier}.
To see that it is liminal, recall that $\Ups^0_k/\Ups_{k-1}^0$ is a subquotient of $\Cs(G_0)$,
and every subquotient of a liminal $\Cs$-algebra
is again liminal \cite[Proposition 4.2.4]{Dixmier};
since  $\Cs(G_0)$ is liminal  --- which can be seen e.g. by combining \cite[Theorem 3.2]{Higson_08} and \cite[Examples 1.42]{Crisp} ---
this implies our statement.
Therefore the inclusion $V_0 \subset W_0$
can hold only if $V_0 = W_0$.

The final thing to be checked is that the map $\pi \longmapsto \varrho$, from the spectrum of $\Ups_{k}/\Ups_{k-1}$ to the spectrum of $\Ups^0_{k}/\Ups^0_{k-1}$, is bijective. But since  this map can be identified with  the restriction of the Mackey bijection to the subsets of $\hatGtemp$ and $\hatGzero$ where the lowest $K$-types of representations have height $R_k$, the result follows from  the fact that the Mackey bijection preserves lowest $K$-types. This concludes the proof.\qed \\

\newpage
\noindent We have obtained:

\begin{theorem}\label{th-strat-LKT}~
\begin{enumerate}[(a)]
\item The embedding $\alpha\colon \Cs(G_0) \longrightarrow \Csr(G)$
is a stratified equivalence morphism for the filtrations
$(\Ups^0_k)_{k \in \N}$ and $(\Ups_k)_{k \in \N}$.
\item The embedding $\alpha\colon \Cs(G_0) \longrightarrow \Csr(G)$
is a stratified equivalence morphism for the filtrations
$(I_n^0)_{n \in \N}$ and $\left(I_n\right)_{n \in \N}$.
\end{enumerate}

\end{theorem}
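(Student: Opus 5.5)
The theorem is essentially an assembly of the results just proved, and I would organize the argument by checking the two conditions of Definition~\ref{def-stratified-equivalence} in turn for each pair of filtrations.

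First, I record that the families involved are genuine filtrations in the sense of Section~\ref{sec-strat-defs}. They are increasing families of closed ideals starting at $\{0\}$ with dense union: for $(\Ups_k)$ and $(\Ups^0_k)$ this is Proposition~\ref{prop-first-filtration}, and for $(I_n)$, $(I_n^0)$ it is the discussion in Section~\ref{sec-second-filtration}. I also check $\Ups_0=\{0\}$. Since $R_0=-1$, every tempered representation lies in $\mathscr{V}_0$, so $\Ups_0$ is the annihilator of all of $\hatGtemp$, hence zero in $\Csr(G)$; the same holds for $\Ups_0^0$. The morphism $\alpha$ is a $\Cs$-morphism, hence continuous.

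Condition~(i) of Definition~\ref{def-stratified-equivalence}, compatibility with the filtrations, is exactly Proposition~\ref{prop-compatibility-stratifications}: part~(a) gives the $\Ups$-filtrations and part~(b) gives the $I$-filtrations. In particular $\alpha$ induces well-defined morphisms $\alpha_k^\Ups$ and $\alpha_n$ on the subquotients. Condition~(ii), that these induced morphisms are spectrum-preserving, is Proposition~\ref{prop-stratified-embedding}.

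The one point I would make sure of is the uniqueness half of spectrum preservation. This needs the preimage $\alpha_n^{-1}(V)$ to be contained in a single primitive ideal, and I would argue it via Lemma~\ref{lem-technical-strat} together with liminality. The lemma shows that this preimage is the primitive ideal $V_0$ itself. Every subquotient of the liminal algebra $\Cs(G_0)$ is liminal, so in each subquotient the primitive ideals are maximal, and hence $V_0$ is the only primitive ideal containing $\alpha_n^{-1}(V)$. Bijectivity of the induced map on spectra then follows from the fact that the Mackey bijection preserves lowest $K$-types, by \eqref{eq-mackey-preserves-lkt}. Indeed, the spectra of the subquotients are the level sets $\LKT^{-1}(E_n)$, respectively the height-$R_k$ level sets, on both sides.

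I expect no real obstacle here. The only care needed is bookkeeping: checking that the indices in Lemma~\ref{lem-technical-strat}(b) match, so that each $k$ with $R_k\in\Lambda$ is attained by some $E_n$, and that the whole argument was carried out for both filtrations in parallel. Combining the two verified conditions yields (a) and (b).
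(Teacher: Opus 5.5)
Your proposal is correct and follows the paper's route. The theorem is obtained by combining Proposition~\ref{prop-compatibility-stratifications} (compatibility with the filtrations) with Proposition~\ref{prop-stratified-embedding}, whose proof uses exactly the ingredients you single out: Lemma~\ref{lem-technical-strat}, maximality of primitive ideals in liminal subquotients of $\Cs(G_0)$, and preservation of lowest $K$-types by the Mackey bijection. Your check that $\Ups_0=\Ups_0^0=\{0\}$ (because $R_0=-1$) is harmless bookkeeping that the paper leaves implicit.
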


In both~(a) and~(b) the bijection $\hatGtemp \longrightarrow \hatGzero$ attached to $\alpha$ by Remark~\ref{rk-stratified-induces-bijection} is, of course, the Mackey bijection: this follows immediately from Lemma~\ref{lem-technical-strat}.

\section{Topological properties of the Mackey bijection}\label{sec-topology}

In this section, we prove that for all $k \geq 1$, the Mackey bijection induces a homeomorphism between the spectra of $\Ups_k^0/\Ups_{k-1}^0$ and that of $\Ups_k/\Ups_{k-1}$, and similarly that for all $n  \geq 1$, it induces a homeomorphism between the spectra of $I_n^0/I_{n-1}^0$ and $I_n/I_{n-1}$. 

If $\Cs(G_0)$ and $\Csr(G)$ were finitely generated as $\mathbb{C}$-algebras, this would be an immediate consequence of the work of Baum and Nistor \cite[Theorem~2]{BaumNistor}, as in Remark~\ref{rk-stratified-induces-bijection}. However, the algebras $\Cs(G_0)$ and $\Csr(G)$ are not finitely generated, so Baum and Nistor's ``automatic homeomorphism'' theorem  does not apply in our context. We do not know whether a similar property holds without the finite-type hypothesis in a $\Cs$-algebraic framework. 

Nevertheless, it is already {known} that the Mackey bijection \emph{does} induce a homeomorphism between the the spectrum of $I_n^0/I_{n-1}^0$ and that of $I_n/I_{n-1}$: see \cite[Corollary 3.2]{Mackey_CK_isom}. 

The corresponding statement about the spectra of $\Ups_k^0/\Ups_{k-1}^0$ and  $\Ups_k/\Ups_{k-1}$ is strictly stronger than \cite[Corollary 3.2]{Mackey_CK_isom}. It gives further insight into the relationship between the geometries of $\hatGtemp$ and $\hatGzero$, and to the best of our knowledge it was not known. We shall give a purely representation-theoretic proof here.

\begin{theorem}\label{prop-piecewise-homeo}
Fix $R\geq 0$, and let $\hatGzero(R)$ and $\hatGtemp(R)$ be the subsets of $\hatGzero$ and~$\hatGtemp$ consisting of the irreducible representations~$\pi$ such that $\Knorm{\LKT(\pi)}=R$. The Mackey bijection $\mathcal{M}\colon \hatGzero \longrightarrow\hatGtemp$ induces a homeomorphism between  $\hatGzero(R)$ and $\hatGtemp(R)$.
\end{theorem}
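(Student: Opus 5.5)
We plan to deduce the statement from the known finer result, \cite[Corollary 3.2]{Mackey_CK_isom}: for each $n$, $\mathcal{M}$ restricts to a homeomorphism $\LKT_{G_0}^{-1}(E_n)\to\LKT_G^{-1}(E_n)$. The set $\hatGtemp(R)$ is the finite disjoint union of the pieces $\LKT_G^{-1}(E_n)$ with $\Knorm{E_n}=R$. The same holds for $\hatGzero(R)$, and $\mathcal{M}$ respects these decompositions by \eqref{eq-mackey-preserves-lkt}. A bijection that is a homeomorphism on each piece of a finite partition need not be a homeomorphism globally, because the pieces are glued differently on the two sides. So the real content is to show that the gluing of the pieces inside $\hatGtemp(R)$ matches the gluing inside $\hatGzero(R)$. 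We will verify continuity of $\mathcal{M}$ and of $\mathcal{M}^{-1}$ directly, using convergent sequences or nets in the relative topologies. Both spaces are second countable, being spectra of separable $\Cs$-algebras, so sequences suffice.

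The first key step is to describe closure within the stratum $\hatGtemp(R)$ in terms of the parameters $(I,\tau,\nu)$, and similarly in $\hatGzero(R)$ in terms of $(I,\mu,\nu)$. On the $G_0$ side we use Theorem~\ref{th-mackey-G0}(c) and the explicit topology on $\mathfrak{p}^\ast\doublequo K$. A sequence $\varrho_{I_j,\mu_j,\nu_j}$ converges to $\varrho_{I,\mu,\nu}$ exactly when three conditions hold: $\nu_j\to\nu$ in $\adomstar$; eventually $K_{I_j}\subset K_I$, by Lemma~\ref{lem-taille-stab}, which forces $I_j\subset I$; and $\mu_j\subset\mu|_{K_{I_j}}$. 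On the $G$ side we expect the analogous description from the Fell topology on the tempered dual. A sequence $\pi_{I_j,\tau_j,\nu_j}$ converges to $\pi_{I,\tau,\nu}$ when $\nu_j\to\nu$ and $\pi_{I_j,\tau_j,\nu_j}$ occurs, via induction in stages through $P_{I_j}\supset$ the smaller parabolic, in the family $\Ind_{P_I}^G\tau\otimes e^{i\nu_j}$. Concretely, $\Ind_{M_I\cap P_{I_j}}^{M_I}\tau_j\otimes e^{i(\nu_j-\nu)}$ should contain a constituent that converges to $\tau$ in $\widehat{M_I}$. We will take this from standard continuity of parabolic induction together with Theorem~\ref{th-strat-par-ImInfChar}.

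The second key step uses the height constraint. Suppose a limit pair $\pi_j\to\pi$ lies entirely inside $\hatGtemp(R)$. Restriction to $K$ is independent of $\nu$, and by Lemma~\ref{lem-Ktype-ouvert} every lowest $K$-type of $\pi_j$ occurs in $\pi$ for large $j$. Since all heights equal $R$, we get $\LKT(\pi_j)\subset\LKT(\pi)$ for large $j$. We then want to show that the limit relation $\pi_j\to\pi$ is equivalent to the $G_0$ relation $\mathcal{M}^{-1}\pi_j\to\mathcal{M}^{-1}\pi$, namely $\mu_j\subset\mu|_{K_{I_j}}$ and $\nu_j\to\nu$. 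Here we will use Remark~\ref{rk-restriction-ktypes} together with the Vogan lowest $K$-type theory of the tempiric representation $\tau$ of $M_I$ and of its limits. The lowest $K$-types of $\pi_{I_j,\tau_j,\nu_j}$ are generated by $\mu_j$ through $\varrho_{I_j,\mu_j,\nu_j}$. Containment of these lowest $K$-types at the same height $R$ should force $\mu_j$ to be a $K_{I_j}$-constituent of $\mu$, the lowest $K_I$-type of $\tau$. Conversely, the components of $\mu|_{K_{I_j}}$ that produce lowest $K$-types at height $R$ should correspond to tempiric data $\tau_j$ whose induced family contains limit points converging to $\pi$.

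The main obstacle is this last step. We must show that, at fixed height $R$, the Fell-topological adjacency on the $G$ side is governed only by the restriction of lowest $K_I$-types, exactly as on the $G_0$ side. We would first try to prove it using the preservation of lowest $K$-types in \eqref{eq-mackey-preserves-lkt}, applied to Levi subgroups of $M_I$ together with induction in stages. Failing that, we would fall back on an operator-algebraic route. The subquotient $\Ups_k/\Ups_{k-1}$ is cut down by the finite sum $p$ of the $K$-isotypic projections at height $R$, and we would compare the corner algebras $p(\Ups_k/\Ups_{k-1})p$ and their $G_0$ analogues through $\alpha$. Proposition~\ref{prop-technical-compatibility} shows that on these corners only the lowest summand $\varrho$ survives, which would let us identify the two spectra as topological spaces.
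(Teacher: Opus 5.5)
There is a genuine gap. The step you call the main obstacle is the whole content of the theorem, and your outline does not supply it.

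\textbf{The one concrete inference runs the wrong way.} Lemma~\ref{lem-Ktype-ouvert} says that containing a given $K$-type is an \emph{open} condition. So if $\pi_j\to\pi$, then every $K$-type of $\pi$ occurs in $\pi_j$ for large $j$. At common height $R$ this gives $\LKT(\pi)\subset\LKT(\pi_j)$, not $\LKT(\pi_j)\subset\LKT(\pi)$.

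Your inclusion already fails for $\mathrm{SL}(2,\R)$. The nonspherical principal series, with lowest $K$-types $\{\lambda_1,\lambda_{-1}\}$, accumulate at $\nu=0$ on each limit of discrete series. Each of these has a single lowest $K$-type, $\lambda_1$ or $\lambda_{-1}$, of the same height.

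The correct direction is also the one matching the $G_0$ side. There $\mu_j\subset\mu|_{K_{I_j}}$ means, by Frobenius reciprocity and induction in stages, $\Ind_{K_I}^K\mu\subset\Ind_{K_{I_j}}^K\mu_j$.

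\textbf{Your description of convergence in $\hatGtemp$ is reversed.} Your first formulation has members of the sequence occurring in representations built from the limit's data. This is backwards, and also ill-posed, since $\nu_j\notin\mathfrak{a}_I^\ast$ in general. The usable statement is that the limit is a constituent of the representation induced from the \emph{sequence's} data at the limit parameter.

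\textbf{The continuity of $\mathcal{M}^{-1}$ is not carried out.} You also plan to prove this direction yourself. The paper simply cites the main theorem of \cite{Mackey_continuity}. Your route would need, besides Delorme's result, the converse statement that limit points of $\Ind_{P_J}^G(\sigma\otimes e^{i\nu_j})$ are constituents of $\Ind_{P_J}^G(\sigma\otimes e^{i\nu})$. It would then need a height argument like the one below, run at the level of $M_J$. Neither is done.

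\textbf{The missing idea.} For continuity of $\mathcal{M}$ on $\hatGzero(R)$, what is needed is a height argument that identifies the limit.
\begin{enumerate}[(1)]
\item After passing to a subsequence, $\varrho_j=\varrho_{J,\mu_J,\nu_j}\to\varrho=\varrho_{I,\mu,\nu}$ with $J\subset I$, $\nu_j\to\nu$ and $\mu_J\subset\mu|_{K_J}$. Let $\sigma$ be the tempiric representation of $M_J$ with lowest $K_J$-type $\mu_J$.
\item By \cite[Proposition~2.4]{Mackey_continuity}, based on \cite{Delorme86}, every irreducible constituent of $\widetilde\pi=\Ind_{P_J}^G(\sigma\otimes e^{i\nu})$ is a limit point of $\mathcal{M}(\varrho_j)=\Ind_{P_J}^G(\sigma\otimes e^{i\nu_j})$. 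So it suffices to show that $\mathcal{M}(\varrho)=\pi_{I,\tau,\nu}$ is such a constituent.
\item Induction in stages gives $\widetilde\pi=\Ind_{P_I}^G(\Sigma\otimes e^{i\nu})$, where $\Sigma=\Ind_{M_I\cap P_J}^{M_I}\sigma$ is a finite sum of tempiric representations of $M_I$. Its restriction $\Ind_{K_J}^{K_I}(\sigma|_{K_J})$ contains $\mu$ by Frobenius reciprocity. Let $\Sigma_1$ be a summand containing $\mu$.
\item Every $K$-type of $\widetilde\pi$ has height at least $R$, because $\widetilde\pi|_K\simeq\mathcal{M}(\varrho_j)|_K$. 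On the other hand, $\varrho_{I,\mu,\nu}$ has lowest $K$-types of height exactly $R$.
\item Lemma~\ref{lem-technical-length}, applied to $\Sigma_1$, therefore forces $\mu$ to be the lowest $K_I$-type of $\Sigma_1$. Otherwise it would give $R>\Knorm{\LKT(\pi_{I,\Sigma_1,\nu})}\geq R$.
\item Hence $\Sigma_1\simeq\tau$ by Theorem~\ref{th-vogan-tempiric}, and $\mathcal{M}(\varrho)$ is a constituent of $\widetilde\pi$.
\end{enumerate}

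\textbf{The operator-algebraic fallback does not replace this.} Proposition~\ref{prop-technical-compatibility} only gives kernel-level, spectrum-preserving information. The paper states that it does not know how to obtain a homeomorphism from that alone in the $\Cs$-setting. A corner argument would need $\alpha$ to commute with $K$-isotypic projections, which the unitary equivalences of Theorem~\ref{th-chr} do not guarantee. It would also need a surjectivity statement on corners. You address neither.
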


\begin{proof}
The main result of \cite{Mackey_continuity} is that $\mathcal{M}^{-1}\colon \hatGtemp \longrightarrow \hatGzero$ is continuous. Therefore 
it is enough to check that the restriction of $\mathcal{M}$ to $\hatGzero(R)$ is continuous. 

Given the description of the topology on  $\hatGzero$ that arises from Theorem~\ref{th-mackey-G0}(c), it is clear that every point of $\hatGzero(R)$ has a countable neighborhood basis. Therefore the continuity of~$\mathcal{M}$ on $\hatGzero(R)$ can be checked in terms of convergent sequences, see  \cite[Chapter~IX, \S\,2, n°6]{TG9}.

Let $\varrho_\infty$ be an element of $\hatGzero(R)$, and let $(\varrho_n)_{n \in \N}$ be a sequence of elements of $\hatGzero(R)$  that admits $\varrho_\infty$ as a limit point. 
What we have to check is that the sequence $(\mathcal{M}(\varrho_n))_{n \in \N}$ of irreducible tempered representations of~$G$ admits $\mathcal{M}(\varrho_\infty)$ as a limit point. 

After passing to a subsequence, and choosing realizations of the representations as in Section~\ref{sec-params-G0}, we may assume 
\begin{align*} \varrho_n = \Ind_{K_{I_\seq} \ltimes \mathfrak{p}}^{G_0}\left(\mu_{\seq}\otimes e^{i\nu_n}\right) 
\quad \text{and} \quad \varrho_\infty  = \Ind_{K_{I_\infty} \ltimes \mathfrak{p}}^{G_0}\left(\mu_{\infty}\otimes e^{i\nu_\infty}\right)
 \end{align*}
  where $I_\seq$ and $I_\infty$ are subsets of the set~$S$ of simple roots, where $\nu_\infty$ is an element of $\mathfrak{a}_{I_{\infty}, +}^\ast$, where $(\nu_n)_{n \in \N}$ is a sequence of elements of $\mathfrak{a}_{I_{\seq}, +}^\ast$, and where $\mu_\seq$ and $\mu_{\infty}$ are irreducible representations of $K_{I_\seq}$ and $K_{I_\infty}$, respectively.
  We shall view $\nu_\infty$ and the $\nu_n$s as elements of~$\adomstar$, as in Section~\ref{sec-iminfch}, and abbreviate $K_{I_\infty}$ and $K_{I_{\seq}}$ to $K_\infty$ and $K_\seq$.
  The hypothesis that $\varrho_\infty$~is a limit point of $(\varrho_n)_{n \in \N}$ can then be unpacked using Theorem~\ref{th-mackey-G0}(c) above, which summarizes  Baggett's description of the topology of $\hatGzero$ \cite{Baggett}  (see also \cite[Section~3.3]{Mackey_continuity}):
  it means, first, that $(\nu_n)_{n \in \N}$ converges to $\nu_\infty$ in $\amin^\ast$; second, that $I_{\seq}$ contains~$I_{\infty}$, so that $K_{\infty}$ contains $K_{\seq}$; and third,  that the restriction of $\mu_\infty$ to $K_\seq$ contains $\mu_\seq$.  By Frobenius reciprocity the last property means that
  \begin{equation}\label{eq-inclusions-kstructure} \mu_\infty \subset \Ind_{K_\seq}^{K_\infty}(\mu_\seq).\end{equation}
 By induction in stages this implies 
   \begin{equation} \label{eq-inclusions-kstructure-bis} \Ind_{K_\infty}^K(\mu_\infty) \subset \Ind_{K_\seq}^{K}(\mu_\seq).\end{equation}
The left-hand side of~\eqref{eq-inclusions-kstructure-bis} is the restriction of~$\varrho_\infty$ to $K$, and the right-hand side is the restriction of~$\varrho_n$. Since we  assumed that the sets of lowest $K$-types of $\varrho_\infty$ and $\varrho_n$ all have height~$R$, this implies that the lowest $K$-types of $\Ind_{K_\infty}^K(\mu_\infty)$ all occur as lowest $K$-types in $\Ind_{K_\seq}^{K}(\mu_\seq)$. 

Now let $\pi_\infty$ be the element~$\mathcal{M}(\varrho_\infty)$ of $\hatGtemp$; for all $n \in \N$, let $\pi_n$ be the element $\mathcal{M}(\varrho_n)$. Because the Mackey bijection preserves lowest $K$-types, the previous observation on the height of $K$-types implies
\begin{equation}\label{eq-inclusion-LKTs} \LKT(\pi_{\infty}) \subset \LKT(\pi_n)
\end{equation}
for all $n \in \N$. 

Let us try to simplify the notation by rewriting the parabolic subgroup $P_{I_\seq}=M_{I_\seq}A_{I_\seq} N_{I_\seq}$ of~$G$ as $P_{\seq}=M_\seq A_\seq N_\seq$, and writing $P_{\infty}= M_\infty A_\infty N_\infty$ instead of $P_{I_\infty}=M_{I_\infty} A_{I_\infty} N_{I_\infty}$. Choosing realizations for the representations as in Section~\ref{sec-parametrisation-bhy}, we may assume 
\[ \pi_n=\Ind_{P_{\seq}}^{G}\left(\tau_{\seq}\otimes e^{i\nu_n}\right)\]
for all $n \in \N$, where $\tau_\seq$ is a tempiric representation of~$M_{\seq}$ with lowest $K_{\seq}$-type $\mu_\seq$. Since $\nu_n$ converges to $\nu_\infty$ as $n \to \infty$, it follows that all irreducible factors of 
\begin{equation} \label{eq-def-tilde-pi-infty}\widetilde{\pi}_{\infty}=\Ind_{P_{\seq}}^G\left(\tau_\seq \otimes e^{i \nu_\infty}\right)\end{equation}
occur as limit points of $(\pi_n)_{n \in \N}$: see  \cite[Proposition 2.4]{Mackey_continuity}, which is based on \cite[Théorème~2.6]{Delorme86}.

\medskip

\noindent To finish the proof, it is therefore enough to establish the following assertion. 
\begin{claim*} $\pi_\infty$ is one of the irreducible factors of~$\widetilde{\pi}_\infty$. 
\end{claim*}

Let us prove this. Since $\widetilde{\pi}_\infty$ and the $\pi_n$ have the same $K$-module structure, it follows from~\eqref{eq-inclusion-LKTs} that
\begin{equation}\label{eq-inclusion-LKTs-bis}
\LKT(\pi_\infty) \subset \LKT(\widetilde{\pi}_\infty).\end{equation}
To compare $\pi_\infty$ and $\widetilde{\pi}_\infty$ in more detail, let $\tau_\infty$ be a tempiric representation of~$M_\infty$ with lowest $K_\infty$-type $\mu_\infty$; by definition of the Mackey bijection we have 
\begin{equation} \label{eq-reecriture-pi-infty} \pi_\infty \simeq \Ind_{P_\infty}^G\left(\tau_\infty \otimes e^{i\nu_\infty}\right). \end{equation} 
We can bring~\eqref{eq-def-tilde-pi-infty} and~\eqref{eq-reecriture-pi-infty} closer by rewriting $\widetilde{\pi}_\infty$ as parabolically induced from~$P_{\infty}$. Under our assumptions, the intersection $\widetilde{P}=P_\seq \cap M_{\infty}$ is a parabolic subgroup of the reductive group~$M_\infty$: as in \cite[Proof of Theorem 3.5]{Mackey_bijection} (see also  \cite[Section~4.1]{Mackey_continuity})  we may choose subgroups $\widetilde{A}$ and $\widetilde{N}$ of~$G$ such $A_{\seq} = A_\infty \widetilde{A}$, $N_\seq = N_\infty \widetilde{N}$; then $\widetilde{P}=M_\seq \widetilde{A} \widetilde{N}$ is a parabolic subgroup of~$M_\infty$, and  by parabolic induction in stages we have
\begin{equation} \label{eq-widetilde-pi}\widetilde{\pi}_{\infty}=\Ind_{P_{\infty}}^G\big(\Ind_{\widetilde{P}}^{M_\infty}(\tau_\seq \otimes 0) \otimes e^{i \nu_\infty}\big).
\end{equation}
In other words
\begin{equation} \label{eq-widetilde-pi-bis}\widetilde{\pi}_{\infty}=\Ind_{P_{\infty}}^G\big(\Sigma\otimes e^{i \nu_\infty}\big),
\end{equation}
where $\Sigma = \Ind_{\widetilde{P}}^{M_\infty}(\tau_\seq)=\Ind_{M_\seq \widetilde{A} \widetilde{N}}^{M_\infty}(\tau_\seq)$. The representation $\Sigma$ is a direct sum of tempiric representations of $M_\infty$, and its restriction to $K_\infty$ is equivalent to $\Ind_{K_\seq}^{K_\infty}((\tau_\seq)|_{K_\seq})$. By \eqref{eq-inclusions-kstructure} we see that the $K_\infty$-type $\mu_\infty$ occurs in~$\Sigma$. 

We now point out that $\mu_\infty$ is in fact one of the lowest $K$-types of $\Sigma$, and that $\tau_\infty$ is equivalent with one of the irreducible factors of~$\Sigma$. To see this, let us write $\Sigma$ as  $\Sigma_1 \oplus \cdots \oplus \Sigma_k$ where the $\Sigma_j$ are tempiric representations of $M_\infty$.
Without loss of generality we may assume that $\mu_\infty$ occurs in $\Sigma_1$. We then have 
\[ \Ind_{K_\infty}^K\left(\mu_\infty\right) \subset \Ind_{K_\infty}^K\left((\Sigma_1)|_{K_\infty}\right).\] 
Furthermore, the lowest $K$-types on the left hand side also have minimal height in the right-hand side, because the lowest $K$-types on both sides have height~$R$: indeed, the lowest $K$-types on the left-hand side are those of $\pi_\infty$ by Remark~\ref{rk-restriction-ktypes}, and those on the right-hand side are a subset of the lowest $K$-types of $\widetilde{\pi}_{\infty}$, which have the same height as those of~$\pi_\infty$ by~\eqref{eq-inclusion-LKTs-bis}. 
Now, if the unique lowest $K_\infty$-type of $\Sigma_1$ were distinct from $\mu_\infty$, then by Lemma~\ref{lem-technical-length} the lowest $K$-types of $\Ind_{K_\infty}^K(\mu_\infty)$ would not have minimal height in $\Ind_{K_\infty}^K((\Sigma_1)|_K)$. Combined with the preceding observations, this would yield a contradiction.  
 Therefore $\mu_\infty$ is a lowest $K_\infty$-type in the tempiric representations $\Sigma_1$ and $\tau_\infty$, which are therefore equivalent. Since the lowest $K$-types of~$\Sigma$ and the lowest $K$-types of all the $\Sigma_j$ have the same height, this proves our assertion that $\Sigma$ contains~$\mu_\infty$ as a lowest  $K_\infty$-type, and $\tau_\infty$ is one of the irreducible factors of~$\Sigma$.
 By~\eqref{eq-reecriture-pi-infty} and~\eqref{eq-widetilde-pi-bis},  we deduce that $\pi_\infty$ is one of the irreducible factors of $\widetilde{\pi}_\infty$, as claimed. This concludes the proof of Theorem~\ref{prop-piecewise-homeo}.
\end{proof}

The theorem has the following easy consequence, which we highlighted as Theorem~\hyperref[th-b]{B} in the introduction. 

\begin{corollary} \label{cor-homeo-composantes} The Mackey bijection maps every connected component of $\hatGtemp$ homeomorphically onto its image in $\hatGzero$.
\end{corollary}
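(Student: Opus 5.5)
The plan is to deduce the corollary from Theorem~\ref{prop-piecewise-homeo}. The key point will be that the height function $\pi \longmapsto \Knorm{\LKT(\pi)}$ is \emph{locally constant} on $\hatGtemp$, so that it is constant on each connected component. Once that is known, every connected component $C$ of $\hatGtemp$ lies inside a single stratum $\hatGtemp(R)$. By Theorem~\ref{prop-piecewise-homeo}, $\mathcal{M}^{-1}$ restricts to a homeomorphism $\hatGtemp(R)\to\hatGzero(R)$, and each stratum carries the subspace topology. Hence the restriction of $\mathcal{M}^{-1}$ to $C$ is a homeomorphism onto its image $\mathcal{M}^{-1}(C)\subset\hatGzero(R)$, again with the subspace topology. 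This is exactly the statement.

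The steps, in order, are as follows.

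\textbf{Step 1.} I first show that each set $\hatGtemp(R)$ is open in $\hatGtemp$. By Proposition~\ref{prop-first-filtration}, $\mathscr{V}_k$ is closed, so the set of $\pi$ with $\Knorm{\LKT(\pi)}\le R_k$ is open. Openness of $\hatGtemp(R_k)$ additionally needs that the height cannot jump down near a point. For this I would use lower semicontinuity in the other direction. Fix $\pi$ with $\Knorm{\LKT(\pi)}=R_k$ and pick a lowest $K$-type $\lambda$ of $\pi$. By Lemma~\ref{lem-Ktype-ouvert}, the set $\mathcal{U}_\lambda$ is an open neighbourhood of $\pi$. Every representation in $\mathcal{U}_\lambda$ contains $\lambda$, so it has height $\le R_k$. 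Intersecting $\mathcal{U}_\lambda$ with the open set $\mathscr{V}_{k-1}$ of representations of height $>R_{k-1}$ gives an open neighbourhood of $\pi$ on which the height equals $R_k$.

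I need to double-check here that $\mathscr{V}_{k-1}$ is indeed open. It is defined as the set of representations of height $>R_{k-1}$, and Proposition~\ref{prop-first-filtration} says it is \emph{closed}, not open. So the decomposition into strata is not automatically by clopen sets, and this is the main obstacle.

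\textbf{Step 2.} To get around this, I would argue directly that the height is constant on connected components, using the structure of the tempered dual rather than just the closed/open sets above. Using Theorem~\ref{th-bhy-classification}, $\hatGtemp$ is the disjoint union of the pieces $\{\pi_{I,\tau,\nu} : \nu\in\aiplus\}$. On each piece the $K$-structure, and hence the height, is constant, since the restriction to $K$ does not depend on $\nu$. Each piece is connected, as a continuous image of the convex cone $\aiplus$.

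The remaining question is how pieces glue. Here I would use that in $\hatGtemp$ a piece can only touch a piece with the same lowest $K$-types; equivalently, lowest $K$-types are locally constant. The mechanism is the one in the proof of Theorem~\ref{prop-piecewise-homeo}: limit points of $\pi_{I,\tau,\nu_n}$ are irreducible factors of $\Ind_{P_I}^G(\tau\otimes e^{i\nu_\infty})$. These factors share the $K$-module, so each of them has height at least that of the sequence, and each contains some of its lowest $K$-types. Combined with the openness of $\mathcal{U}_\lambda$ from Step 1, this shows that the height is locally constant.

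If this direct argument is too delicate, I would fall back on the known description of connected components from \cite{Mackey_CK_isom}. There, components are unions of pieces sharing a fixed set $E\in\mathscr{E}$ of lowest $K$-types up to the gluing, so the height is constant on each component.

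\textbf{Step 3.} With the height constant on each component $C$, I conclude as in the first paragraph: $C\subset\hatGtemp(R)$, and Theorem~\ref{prop-piecewise-homeo} gives the homeomorphism $C\to\mathcal{M}^{-1}(C)$.
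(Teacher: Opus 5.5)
Your proposal is correct and is essentially the paper's proof: the paper likewise notes that each connected component of $\hatGtemp$ lies in a single stratum $\hatGtemp(R)$, citing \cite{Mackey_continuity}, Section~2.3 (this is your fallback), and then restricts the homeomorphism of Theorem~\ref{prop-piecewise-homeo}. One correction to Step~2: the sets of lowest $K$-types are \emph{not} locally constant, only the height is, and the height is all you use. For instance, in $\mathrm{SL}(2,\R)$ the nonspherical principal series, with $\LKT=E_2$, accumulates at the two limits of discrete series, with $\LKT=E_3$ and $E_4$. Moreover, your direct argument needs the converse of the Delorme statement quoted in the proof of Theorem~\ref{prop-piecewise-homeo}, namely that every limit point is an irreducible factor of $\Ind_{P_I}^G(\tau\otimes e^{i\nu_\infty})$, and this converse requires its own reference.
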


\begin{proof}
    Each connected component of  $\hatGtemp$ is contained in $\hatGtemp(R)$ for a single  number~$R$ (see e.g. \cite{Mackey_continuity}, Section~2.3); it is therefore a locally closed subset of $\hatGtemp(R)$, and the corollary is immediate from this and from the Theorem.
\end{proof}

\begin{remark} In the notation of Theorem~\ref{prop-piecewise-homeo}, the set $\hatGtemp(R)$ is a finite union of connected components of $\hatGtemp$: this follows from the results recalled in \cite[Section~2.3]{Mackey_continuity}, and from the fact that among the possible sets $E \in \mathcal{E}$ of lowest $K$-types for an irreducible representation of~$G$, only a finite number of sets~$E$ have height $R$. Thus the contents of Theorem~\ref{prop-piecewise-homeo} and Corollary~\ref{cor-homeo-composantes} are essentially identical. \end{remark}

Let us also point out that Theorem~\ref{prop-piecewise-homeo} implies the corresponding statement for the spectra of $I_n^0/I_{n-1}^0$ and $I_n/I_{n-1}$, which was already known \cite[Corollary~3.2]{Mackey_CK_isom}.

\begin{corollary} Let~$E$ be a finite collection of~$K$-types, and let $\hatGtemp(E)$ and $\hatGzero(E)$ be the subsets of  $\hatGtemp$ and $\hatGzero$ consisting of those representations whose set of lowest $K$-types is exactly~$E$. The Mackey bijection induces a homeomorphism between $\hatGtemp(E)$ and~$\hatGzero(E)$.
\end{corollary}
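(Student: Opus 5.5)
The natural plan is to deduce the corollary from Theorem~\ref{prop-piecewise-homeo} by restricting to a smaller subset. First, the set $E$ is a single possible set of lowest $K$-types, so all elements of $E$ have a common height $R=\Knorm{E}$. (If $E$ never arises as $\LKT(\pi)$, then both sets are empty and there is nothing to prove.) Consequently $\hatGtemp(E)\subset\hatGtemp(R)$ and $\hatGzero(E)\subset\hatGzero(R)$.

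Second, I will show that the Mackey bijection $\mathcal{M}$ maps $\hatGzero(E)$ onto $\hatGtemp(E)$. This follows from the preservation of lowest $K$-types, equation~\eqref{eq-mackey-preserves-lkt}, together with the bijectivity of $\mathcal{M}$. Indeed, an element $\varrho$ of $\hatGzero$ satisfies $\LKT_{G_0}(\varrho)=E$ if and only if $\LKT_G(\mathcal{M}(\varrho))=E$.

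Third, I will conclude using the topology. Theorem~\ref{prop-piecewise-homeo} gives that $\mathcal{M}$ restricts to a homeomorphism $\hatGzero(R)\to\hatGtemp(R)$, where both sides carry the subspace topology from the Fell topology. Restricting a homeomorphism to a subset gives a homeomorphism onto its image, with the subspace topologies on both sides. Since the subspace topology on $\hatGtemp(E)$ induced from $\hatGtemp(R)$ coincides with the one induced from $\hatGtemp$ (and likewise on the $G_0$ side), this gives the corollary.

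I do not expect any real obstacle here. The only point to check is that the topologies in question are all subspace topologies from the full duals, which holds by transitivity of subspace topologies. In particular, no local closedness of $\hatGtemp(E)$ is needed, although it also holds, since $\hatGtemp(E)$ is the spectrum of $I_n/I_{n-1}$.
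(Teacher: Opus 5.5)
Your proof is correct and follows the paper's route: restrict the homeomorphism of Theorem~\ref{prop-piecewise-homeo} at height $R=\Knorm{E}$ to the subset cut out by preservation of lowest $K$-types. The paper additionally asserts that $\hatGtemp(E)$ and $\hatGzero(E)$ are locally closed, via Lemma~\ref{lem-Ktype-ouvert}; as you note, this is not needed, because a homeomorphism restricts to a homeomorphism on any subset with the subspace topologies.
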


\begin{proof} Set $R=\Knorm{E}$; then it is an easy consequence of Lemma~\ref{lem-Ktype-ouvert} that $\hatGtemp(E)$ and $\hatGzero(E)$ are locally closed subsets of $\hatGtemp(R)$ and $\hatGzero(R)$, respectively, and the corollary follows. \end{proof}

\section{Localizations of the Mackey embedding}\label{sec-localizations}

The definition of the Mackey embedding $\alpha\colon \Cs(G_0) \longrightarrow \Csr(G)$ in \cite{CHR} is somewhat intricate, and necessitates rather delicate properties of intertwining operators. In contrast, the recent work of Bradd, Higson and Yuncken  features apparently much simpler $\Cs$-morphisms that are analogues of the Mackey embedding, but are defined only on certain subquotients of $\Csr(G)$ and $\Cs(G_0)$---for filtrations of these two algebras that are defined in terms of the  imaginary part of the infinitesimal character. These ``local'' versions of the embedding are key ingredients for the proofs of the main results of~\cite{BHY}. 

In this section we compare the global Mackey embedding of \cite{CHR} and the local versions in \cite{BHY}, and reflect on the (non-)uniqueness of an embedding of $\Cs(G_0)$ into~$\Csr(G)$ satisfying the spectral property~\eqref{eq-spectral-CHR}.

\subsection{The infinitesimal character filtration; localized embeddings}

\subsubsection{\texorpdfstring{The stratification and subquotients for~$G$}{The stratification and subquotients for G}} \label{sec:notation_BHY}
\label{sec:general_notation}

 For $I \subset S$, define an open subset $\UI$ of $\adomstar$ as in \cite[Definition 3.2.1]{BHY}, by 
 \[ \UI = \bigcup_{J \subset I} \aiplus.\] Because the map $\Iminfch\colon \hatGtemp \longrightarrow \adomstar$ is continuous \cite[Theorem 3.1.1]{BHY}, the set $\Iminfch^{-1}(U_I)$ is open in $\hatGtemp$. 
 
 \begin{definition} Given a subset $I$ of~$S$, 
 \begin{enumerate}[(i)]
\item Let  $\Csr(G; U_I) \subset \Csr(G)$ be the annihilator of $\hatGtemp\setminus(\Iminfch^{-1}(U_I))$; that is, the ideal 
\[ \Csr(G; U_I) =\left\{\ f \in \Csr(G) \ : \ \text{$\pi(f)=0$ for all irred. tempered $\pi$ with  $\Iminfch(\pi)\notin \UI$}\ \right\}. \] 
 
\item Let $\Csr(G;I)$ be the quotient of $\Csr(G; \UI)$ by the ideal consisting of all elements that vanish in every representation $\pi$ with $\Iminfch(\pi)\in \UI\setminus \aiplus$.
\end{enumerate}
 We shall write $\proj_I\colon \Csr(G; U_I)\longrightarrow \Csr(G; I)$ for the canonical projection.
\end{definition}

\begin{remark}\label{rk-notations-filtrBHY}
    Suppose a subset $I$ of~$S$ is given. Using the discussion in  Section~\ref{sec-basics-cstar}, we may identify the unitary irreducible representations of~$\Csr(G; \UI)$ with  the irreducible tempered representations of~$G$ with $\Iminfch(\pi) \in \UI$; and the  unitary irreducible representations of~$\Csr(G; I)$ with the irreducible tempered representations of~$G$ with $\Iminfch(\pi) \in \aiplus$. 
    In particular, the spectrum of $\Csr(G; I)$ consists precisely of the equivalence classes of the representations $\pi_{I, \tau, \nu}$, as $\tau$ ranges over the tempiric representations of $M_I$ and $\nu$ ranges over~$\aiplus$. 

    Given a tempiric representation $\tau$ of~$M_I$ and an element $\nu$ of $\aiplus$, we shall use the same notation $\pi_{I, \tau, \nu}$ for the representation of~$G$ on $\Ind H_\tau$ introduced in Section~\ref{sec-parametrisation-bhy}, for the representation of $\Csr(G; \UI)$ that is obtained by restricting the corresponding representation of $\Csr(G)$ to $\Csr(G; \UI)$, and for the representation of $\Csr(G; I)$ that is obtained from the latter by passing to quotients. 
    Thus the spectrum of $\Csr(G; I)$ can be identified (as a set) with $\hatMItempiric\times \aiplus$, where $\hatMItempiric$ denotes the set of equivalence classes of tempiric representations of~$M_I$. 
\end{remark}

In fact, the following observation of Bradd, Higson and Yuncken shows that for each $I \subset S$, the subquotient $\Csr(G; I)$ has a very simple structure, and that its spectrum can be identified with $\hatMItempiric \times \aiplus$ as  a topological space, not just as a set. 

\begin{lemma}[\cite{BHY}, Theorem 3.4.2]\label{lem-bhy-quo-structure}
There is a unique $\Cs$-algebra isomorphism 
\begin{equation} \label{eq-isom-cstarG-i}\mathscr{F}_I\colon \Csr(G; I) \longrightarrow \ \bigoplus \limits_{\tau \in \widehat{M}_{I, \textrm{tempiric}}}\mathscr{C}_0\left(\aiplus, \compact\left(\Ind H_\tau\right)\right)
\end{equation}
such that, for any element $f$ of $\Csr(G;I)$, the $\tau$-component of $\mathscr{F}_I(f)$ is the function $\nu \longmapsto \pi_{I, \tau, \nu}(f)$.
\end{lemma}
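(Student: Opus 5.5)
The plan is to build $\mathscr{F}_I$ as the product of the maps $f \longmapsto (\nu \longmapsto \pi_{I,\tau,\nu}(f))$. We first show that these maps take values in the right target. Then we show that the resulting morphism is injective and surjective. Uniqueness costs nothing: the formula prescribes every component of $\mathscr{F}_I(f)$.

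The first step is to check that for $f \in \Csr(G;\UI)$ and fixed tempiric $\tau$, the function $\nu \longmapsto \pi_{I,\tau,\nu}(f)$ is norm-continuous on $\aiplus$, compact-operator-valued, and vanishes at infinity. It also has to vanish at the boundary of $\aiplus$, i.e.\ as $\nu$ approaches points of $\UI\setminus\aiplus$ or leaves $\UI$, once we pass to the quotient. In the compact picture the Hilbert space $\Ind H_\tau$ does not depend on $\nu$. For $f \in C_c^\infty(G)$, continuity in $\nu$ and compactness are standard: the $\pi_{I,\tau,\nu}(f)$ are integral operators with kernels depending smoothly on $\nu$, and admissibility gives compactness. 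Decay at infinity in $\nu$ follows from the Riemann--Lebesgue lemma. Density of $C_c^\infty(G)$ then gives a $\Cs$-morphism $\Csr(G) \to \prod_\tau C_b(\mathfrak{a}_I^\ast,\compact(\Ind H_\tau))$. For elements of $\Csr(G;\UI)$, the $\nu$-function vanishes off $\UI$ pointwise. Elements in the kernel of $\proj_I$ vanish on $\UI\setminus\aiplus$. So the induced map on $\Csr(G;I)$ is well defined, and taking norms modulo that kernel gives the $\mathscr{C}_0(\aiplus,\cdot)$ condition. Finally, only finitely many $\tau$ contribute beyond any $\varepsilon$, which gives the direct-sum (rather than product) condition. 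I would get this from the fact that the $\tau$ containing a fixed $K$-type are finite in number, combined with an approximation of $f$ by elements of $p_E \Csr(G) p_E$ for finite sets $E$ of $K$-types, as in Lemma~\ref{lem-Ktype-ouvert}.

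Injectivity is straightforward. By Remark~\ref{rk-notations-filtrBHY}, every irreducible representation of $\Csr(G;I)$ is some $\pi_{I,\tau,\nu}$, and $\mathscr{F}_I(f)=0$ means $f$ is killed by all of them. Hence $f=0$, since a $\Cs$-algebra embeds into the product of its irreducible representations. An injective $\Cs$-morphism is isometric, so the image is closed.

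Surjectivity is the main obstacle. I would use the Stone--Weierstrass theorem for $\Cs$-subalgebras of $\mathscr{C}_0(X,\compact)$-type algebras, in the form of Dixmier's criterion for subalgebras of continuous-trace algebras. The image $\mathcal{A}$ is a closed subalgebra. It is irreducible in each fibre, because each $\pi_{I,\tau,\nu}$ is irreducible by Theorem~\ref{th-bhy-classification}(a). It also separates inequivalent fibre representations, by Theorem~\ref{th-bhy-classification}(c). The genuine difficulty is that the fibre representations $(\tau,\nu)$ and $(\tau,\nu')$ must not merely be inequivalent: they must not be identified up to unitary equivalence by the image, and the separation has to hold in the strong sense required by the theorem. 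Here I would use that no Knapp--Stein intertwiners relate distinct points of $\aiplus$, since $\aiplus$ is a fundamental domain in the regular part. I would then check that the spectrum topology of $\Csr(G;I)$ is Hausdorff and agrees with that of $\hatMItempiric\times\aiplus$, using continuity of $\Iminfch$ and the openness of $K$-type conditions. With that, a $\Cs$-algebra with Hausdorff spectrum and irreducible, pairwise inequivalent fibres fills the whole $\mathscr{C}_0$-section algebra.
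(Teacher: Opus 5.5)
Since the paper does not prove this lemma and simply quotes it from \cite{BHY}, your route is independent of the text. It is correct in substance. You reduce the statement to the classification in Theorem~\ref{th-bhy-classification} plus routine analytic facts:
\begin{itemize}
\item norm continuity in $\nu$ in the compact picture;
\item Riemann--Lebesgue;
\item vanishing on the boundary of $\aiplus$ in $\mathfrak{a}_I^\ast$, because those points lie outside $\UI$;
\item finiteness of the set of tempiric $\tau$ containing a given $K_I$-type, which follows from Theorem~\ref{th-vogan-tempiric} for $M_I$ by comparing heights.
\end{itemize}
This shows that the lemma is a soft consequence of the classification. However, your surjectivity paragraph misidentifies the difficulty, and the extra step you propose there would not work as stated.

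Write $\mathcal{A}=\mathscr{F}_I(\Csr(G;I))$ and $\mathcal{T}=\bigoplus_\tau\mathscr{C}_0(\aiplus,\compact(\Ind H_\tau))$. The algebra $\mathcal{T}$ is liminal, its irreducible representations are the pairwise inequivalent evaluations $\mathrm{ev}_{\tau,\nu}$, and $\mathrm{ev}_{\tau,\nu}\circ\mathscr{F}_I=\pi_{I,\tau,\nu}$.
\begin{itemize}
\item By Theorem~\ref{th-bhy-classification}(a), and since $\nu\in\UI$, each restriction $\mathrm{ev}_{\tau,\nu}|_{\mathcal{A}}$ is irreducible.
\item By Theorem~\ref{th-bhy-classification}(c), any two of these restrictions are inequivalent, because passing to ideals and quotients preserves unitary equivalence (Section~\ref{sec-basics-cstar-1}).
\end{itemize}
These are exactly the hypotheses of Kaplansky's Stone--Weierstrass theorem for liminal $\Cs$-algebras (extended by Glimm to postliminal ones). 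So $\mathcal{A}=\mathcal{T}$, and you are done.

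There is no stronger separation to check: ``not identified up to unitary equivalence by the image'' \emph{is} inequivalence of the restrictions. Knapp--Stein intertwiners are irrelevant, since distinct $\nu,\nu'\in\aiplus$ already give distinct values of $\Iminfch$.

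Hausdorffness of the spectrum is an output of the lemma, not an input. It also could not be obtained from continuity of $\Iminfch$ and openness of $K$-type conditions alone. Take $G=\mathrm{SL}(2,\R)$ and $I=S$, so that $\aiplus=\{0\}$. Every $K$-type $\lambda_3,\lambda_5,\dots$ of the discrete series with lowest $K$-type $\lambda_3$ also occurs in the limit of discrete series with lowest $K$-type $\lambda_1$. Hence every open set built from those conditions that contains the former also contains the latter. Separating them requires knowing that discrete series are open points. You should simply drop that step.

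A smaller point concerns descent from $\Csr(G;\UI)$ to $\Csr(G;I)$. It must use that $\ker(\proj_I)$ consists of the elements killed by every $\pi$ with $\Iminfch(\pi)\in\aiplus$. This is what Remark~\ref{rk-notations-filtrBHY} requires, so the phrase ``$\UI\setminus\aiplus$'' in the definition has to be read accordingly, and then descent is tautological. Your stated reason, that kernel elements vanish on $\UI\setminus\aiplus$, does not imply vanishing at the $\pi_{I,\tau,\nu}$. In the same $\mathrm{SL}(2,\R)$ example, a nonzero element of the summand of $\Csr(G)$ attached to a discrete series kills every representation with $\Iminfch\neq 0$, but not that discrete series.
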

In the statement, the notation $\compact(\Ind\, H_\tau)$ means the $\Cs$-algebra of compact operators on the Hilbert space~$\Ind\, H_\tau$ from~\eqref{eq-def-Htau}, the notation $\mathscr{C}_0\left(\aiplus, \compact\left(\Ind H_\tau\right)\right)$ means the $\Cs$-algebra of continuous functions $\aiplus \longrightarrow \compact\left(\Ind H_\tau\right)$ that vanish at infinity, and the $\oplus$ sign means the $\Cs$-algebraic direct sum. 

\subsubsection{\texorpdfstring{The analogues for~$G_0$}{The analogues for G0}} 

For every subset $I$ of~$S$, the ideal $\Csr(G; \UI)$ and the subquotient $\Csr(G; I)$ have counterparts in the $\Cs$-algebra $\Cs(G_0)$. The infinitesimal character map $\Iminfch\colon \hatGzero\longrightarrow \adomstar$ is, again, continuous \cite[Lemma 3.1.5]{BHY}. Thus we may consider the ideal $\Cs(G_0; \UI) \subset \Cs(G)$ defined as the annihilator of $\hatGzero \setminus \Iminfch^{-1}(\UI)$, and the quotient $\Cs(G_0; I)$ of $\Cs(G_0; \UI)$ by the ideal of all elements that vanish in every representation $\varrho$ with $\Iminfch(\varrho) \in \UI\setminus \aiplus$. Thus the spectrum of $\Cs(G_0; \UI)$ may be identified with the set of elements $\varrho \in \hatGzero$ such that $\Iminfch(\varrho) \in \UI$, and the spectrum of $\Cs(G_0; I)$ with the set of all representations $\varrho_{I, \mu, \nu}$ for $\mu \in \hatKI$ and $\nu \in \aiplus$. We shall take up the same notational conventions as in Remark~\ref{rk-notations-filtrBHY} concerning the identification of $\varrho_{I, \mu, \nu}$ as a representation of $G$, of $\Cs(G_0; \UI)$, or of $\Cs(G_0; I)$. 

Again, the subquotients $\Cs(G_0; I)$ have a very simple structure: there is a unique $\Cs$-algebra isomorphism 
\begin{equation} \label{eq-isom-cstarGzero-i} \mathscr{F}^0_I\colon \Cs(G_0; I) \longrightarrow \ \bigoplus \limits_{\mu \in \widehat{K}_{I}}\mathscr{C}_0\left(\aiplus, \compact\left(\Ind H_\mu\right)\right)
\end{equation}
such that, for any element $f$ in $\Cs(G_0; I)$, the $\mu$-component of the Fourier transform $\mathscr{F}_I^0(f)$ is the function $\nu \longmapsto \varrho_{I, \mu, \nu}$. For all this, see \cite[Section 4.3]{BHY}.

\subsubsection{Local version of the Mackey embedding} \label{sec-local-embedding}

\begin{proposition}[\cite{BHY}, Lemma 5.4.3]\label{prop-bhy-local}
Fix $I \subset S$. There exists a unique $\Cs$-morphism 
\begin{equation}\label{eq-BHY-embedding-bis}  \alpha^{I}\colon \Cs(G_0; I) \longrightarrow  \Csr(G; I)\end{equation}
with the property that 
\begin{equation} \label{eq-spectral-BHY-bis} \pi_{I,\tau, \nu}\big(\alpha^{I}(\varphi)\big) = \varrho_{I, \tau, {\nu}}(\varphi) \quad \forall \varphi \in  \Cs(G_0; I)\end{equation}
for any tempiric representation $\tau$ of~$M$ and any $\nu \in \aiplus$. 
\end{proposition}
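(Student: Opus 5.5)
The plan is to work entirely on the Fourier side, using the isomorphisms $\mathscr{F}_I$ of Lemma~\ref{lem-bhy-quo-structure} and $\mathscr{F}_I^0$ of \eqref{eq-isom-cstarGzero-i}. Since $\mathscr{F}_I$ is injective and the $\tau$-component of $\mathscr{F}_I(\psi)$ is $\nu\mapsto \pi_{I,\tau,\nu}(\psi)$, condition \eqref{eq-spectral-BHY-bis} prescribes every component of $\mathscr{F}_I(\alpha^I(\varphi))$. Uniqueness is therefore immediate. For existence, I will define
\[ \alpha^I = \mathscr{F}_I^{-1}\circ \Phi \circ \mathscr{F}_I^0, \]
where $\Phi$ sends a family $(g_\mu)_{\mu\in\hatKI}$, with $g_\mu\in \mathscr{C}_0(\aiplus,\compact(\Ind H_\mu))$, to the family whose $\tau$-component is
\[ \nu\longmapsto \bigoplus_{\mu\in\hatKI}\mult(\mu,\tau)\cdot g_\mu(\nu). \]
This operator acts on $\Ind H_\tau\simeq \bigoplus_\mu \mult(\mu,\tau)\,\Ind H_\mu$, which is the decomposition \eqref{eq-dec-isotyp} coming from the $K_I$-isotypic decomposition of $H_\tau$. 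By construction, $\varrho_{I,\tau,\nu}(\varphi)$ is exactly this block-diagonal operator with $g_\mu(\nu)=\varrho_{I,\mu,\nu}(\varphi)$. The identity \eqref{eq-spectral-BHY-bis} then holds on the nose, not merely up to equivalence, provided we use the identification of $\Ind H_\tau$ with the sum of the $\Ind H_\mu$ induced by a fixed isotypic decomposition of $H_\tau|_{K_I}$. This decomposition is independent of $\nu$.

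The main obstacle is checking that $\Phi$ really lands in $\bigoplus_\tau \mathscr{C}_0(\aiplus,\compact(\Ind H_\tau))$.

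First, for each $\nu$ the block operator must be compact, even though $\tau|_{K_I}$ contains infinitely many $K_I$-types. Each multiplicity $\mult(\mu,\tau)$ is finite because tempiric representations are admissible. It then suffices to show that $\|g_\mu(\nu)\|\to 0$ as $\mu\to\infty$ in $\hatKI$, uniformly in $\nu$. I will use the fact that an element of the $\Cs$-algebraic direct sum has components whose sup-norms tend to zero along the index set $\hatKI$. A norm-convergent block-diagonal sum of compacts with finitely many copies of each block is then compact.

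Second, continuity in $\nu$ and vanishing at infinity follow from the same uniform estimate: the partial sums over finitely many $\mu$ are continuous and vanish at infinity, and they converge uniformly in $\nu$.

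Third, the image must lie in the $\Cs$-direct sum over $\tau$, i.e.\ the norms of the $\tau$-components must tend to zero. Here I will use Theorem~\ref{th-vogan-tempiric}: $\tau\mapsto$ (lowest $K_I$-type) is a bijection onto $\hatKI$, and every $K_I$-type of $\tau$ has height at least that of its lowest $K_I$-type. As $\tau\to\infty$ its lowest $K_I$-type goes to infinity, hence so do all its $K_I$-types, and the norm of the $\tau$-component is at most $\sup_{\mu\in\tau}\|g_\mu\|_\infty\to0$. This last step is the one that needs care: it rests on finiteness of norm-bounded subsets of $\hatKI$.

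Finally, $\Phi$ is clearly a $*$-homomorphism, being a componentwise direct sum of copies of coordinate maps. Composing with the isomorphisms $\mathscr{F}_I^0$ and $\mathscr{F}_I^{-1}$ gives the $\Cs$-morphism $\alpha^I$. As a bonus, $\alpha^I$ is injective, since each $\mu$ occurs in $\tau(\mu)$.
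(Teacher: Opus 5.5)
Your proposal is correct and takes the route the paper itself indicates. The paper gives no proof of its own: it cites \cite{BHY}, notes that the construction is simple because of the isomorphisms $\mathscr{F}_I$ and $\mathscr{F}_I^0$, and attributes uniqueness to the fact that the $\pi_{I,\tau,\nu}$ separate points of $\Csr(G;I)$. Your argument supplies the details left to \cite{BHY}. You transport the $\nu$-independent block-diagonal decomposition of $\varrho_{I,\tau,\nu}$ through the two Fourier isomorphisms. Your three $c_0$-type checks are sound, including the use of Vogan's lowest $K_I$-type bijection to show that each $K_I$-type occurs in only finitely many tempiric $\tau$.
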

\begin{remark} As was the case for Theorem~\ref{th-chr} the construction is explicit, but here it is much simpler because of the isomorphisms \eqref{eq-isom-cstarG-i} and~\eqref{eq-isom-cstarGzero-i}. One essential difference with Theorem~\ref{th-chr} is the uniqueness statement in the proposition, which comes from the fact that the spectral property~\eqref{eq-spectral-BHY-bis} is an equality of operators rather than a unitary equivalence of representations, and that the representations $\pi_{I, \tau, \nu}$ of $\Csr(G; I)$ separate points. This difference between Proposition~\ref{prop-bhy-local} and Theorem~\ref{th-chr} will be a key point in this section. \end{remark}

\subsection{\textbf{The compatibility between the global and local embeddings}}\label{sec-compatibility}

 Henceforth we shall assume that the Mackey embedding $\alpha\colon \Cs(G_0)\longrightarrow \Csr(G)$ is constructed as in \cite{CHR}. Therefore it comes with continuous families of intertwining operators, as in Remark~\ref{rk-chr-constructif}: for each $I \subset S$ and every tempiric representation $\tau$ of $M_I$, the construction in $\cite{CHR}$ provides a continuous family $(\mathcal{B}_{\tau,\nu})_{\nu \in \mathfrak{a}_I^\ast}$ of unitary automorphisms of $\Ind H_\tau$ such that 
 \begin{equation} \label{eq-spectral-CHR-refined} 
\pi_{I, \tau, \nu} \left( \alpha (g)\right) = \mathscr{B}_{\tau, \nu}\ \varrho_{I, \tau, \nu}(g) \ \mathscr{B}_{\tau, \nu}^{-1} \qquad \forall g \in \Cs(G_0).
 \end{equation} 

For each $I \subset S$, we may use these families to construct an automorphism of $\Csr(G; I)$, as follows. Calling in the Fourier transform $\mathscr{F}_I$ in Lemma~\ref{lem-bhy-quo-structure}, we define $\mathscr{B}_I\colon \Csr(G; I) \longrightarrow \Csr(G; I)$ to be the unique map with the following property: for every $\varphi \in \Csr(G; I)$ and  $\tau \in \hatMItempiric$, if we denote by $\widehat{\varphi}_\tau$ the $\tau$-summand of $\mathscr{F}_I(\varphi)$, then the $\tau$-summand of $\mathscr{F}_I(\mathcal{B}_{I}(\varphi))$ is the function 
\begin{align*} \aiplus & \longrightarrow \operatorname{End}\left(\Ind\,H_\tau\right) \\ \nu & \longmapsto \mathscr{B}_{\tau, \nu} \widehat{\varphi}_\tau(\nu) \mathscr{B}_{\tau, \nu}^{-1}.\end{align*} 
Because the operators $\mathcal{B}_{\tau, \nu}$ are unitary and depend continuously on $\nu$, the map $\mathscr{B}_I$ is well-defined, and is a $\Cs$-algebra automorphism. 

Our next result compares the Mackey embedding $\alpha\colon \Cs(G_0) \longrightarrow \Csr(G)$ with the collection of canonical embeddings $\alpha^I\colon \Cs(G_0; I) \longrightarrow \Csr(G; I)$, as $I$ ranges over the subsets of~$S$.
 
\begin{proposition} \label{prop-compatibility-embeddings}~
\begin{enumerate}
\item[(a)] For each  $I \subset S$, the Mackey embedding $\alpha\colon \Cs(G_0)\longrightarrow \Csr(G)$  maps $\Cs(G_0; \UI)$ into $\Csr(G; \UI)$.  

\item[(b)] For each $I \subset S$, the following  diagram commutes: 
 \begin{equation*}
\begin{tikzcd}
  \Cs(G_0; \UI)\arrow{d}{\proj_I}\arrow{r}{\alpha}
  & \Cs(G; \UI)\arrow{d}{\proj_I} \\
\Cs(G_0; I)
\arrow{r}{ \mathscr{B}_I\, \alpha^{I}}  & \Csr(G; I).
\end{tikzcd}
\end{equation*}
\end{enumerate}
\end{proposition}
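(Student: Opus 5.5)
Both parts reduce to the spectral property \eqref{eq-spectral-CHR-refined}, to the description of spectra recalled in Remark~\ref{rk-notations-filtrBHY}, and to the fact that irreducible representations separate points of a $\Cs$-algebra.

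For (a), take $g \in \Cs(G_0; \UI)$ and an irreducible tempered $\pi$ with $\Iminfch(\pi)\notin \UI$. We must show $\pi(\alpha(g))=0$. By Theorem~\ref{th-bhy-classification} we may write $\pi=\pi_{J,\tau,\nu}$ with $\nu\in\mathfrak{a}^\ast_{J,+}$ and $J\not\subset I$; note that $\Iminfch(\pi)=\nu$. By \eqref{eq-spectral-CHR}, $\pi\circ\alpha\simeq \varrho_{J,\tau,\nu}$, which decomposes as in \eqref{eq-dec-isotyp} into the irreducibles $\varrho_{J,\lambda,\nu}$. Each of these has imaginary part of the infinitesimal character equal to $\nu\notin\UI$, so each vanishes on $g$. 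Hence $\pi(\alpha(g))=0$, so $\alpha(g)\in\Csr(G;\UI)$.

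For (b), first check that both compositions are well defined. The map $\proj_I\circ\alpha$ is defined by (a). We must also show that $\alpha$ sends the kernel of $\proj_I$ on the $G_0$ side into the kernel on the $G$ side. The same argument as in (a) does this: if $\pi$ has $\Iminfch(\pi)\in\UI\setminus\aiplus$, then every summand of $\pi\circ\alpha$ has the same imaginary part, so it lies in $\UI\setminus\aiplus$ and kills the kernel. Consequently both $\proj_I\circ\alpha$ and $\mathscr{B}_I\,\alpha^I\circ\proj_I$ are $\Cs$-morphisms $\Cs(G_0;\UI)\to\Csr(G;I)$.

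Because the representations $\pi_{I,\tau,\nu}$ with $\tau\in\hatMItempiric$ and $\nu\in\aiplus$ separate points of $\Csr(G;I)$ (Lemma~\ref{lem-bhy-quo-structure}), it suffices to compare the two maps after applying each such $\pi_{I,\tau,\nu}$. On one side, \eqref{eq-spectral-CHR-refined} gives
\[
\pi_{I,\tau,\nu}\big(\alpha(g)\big)=\mathscr{B}_{\tau,\nu}\,\varrho_{I,\tau,\nu}(g)\,\mathscr{B}_{\tau,\nu}^{-1}.
\]
On the other side, the definition of $\mathscr{B}_I$ through $\mathscr{F}_I$ gives
\[
\pi_{I,\tau,\nu}\big(\mathscr{B}_I\alpha^I(\proj_I g)\big)=\mathscr{B}_{\tau,\nu}\,\pi_{I,\tau,\nu}\big(\alpha^I(\proj_I g)\big)\,\mathscr{B}_{\tau,\nu}^{-1}.
\]
By \eqref{eq-spectral-BHY-bis}, the middle factor equals $\varrho_{I,\tau,\nu}(\proj_I g)=\varrho_{I,\tau,\nu}(g)$. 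The two expressions therefore coincide.

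The main obstacle is bookkeeping rather than substance. We must make sure that \eqref{eq-spectral-CHR-refined}, stated for $\nu\in\mathfrak{a}_I^\ast$, is applied with the same Hilbert space $\Ind H_\tau$ and the same realization $\varrho_{I,\tau,\nu}$ as in \eqref{eq-spectral-BHY-bis}, so that the operators match exactly and not merely up to equivalence. We must also verify that $\mathscr{B}_I$, defined through the Fourier transform, really acts by conjugation at each point $\nu\in\aiplus$. Both follow from Remark~\ref{rk-chr-constructif} and from the uniqueness in Lemma~\ref{lem-bhy-quo-structure}.
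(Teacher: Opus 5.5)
Your proposal is correct and follows the paper's proof essentially verbatim: part (a) via \eqref{eq-spectral-CHR} and the fact that every irreducible summand of $\varrho_{J,\tau,\nu}$ has imaginary part of the infinitesimal character $\nu\notin \UI$, and part (b) by testing both sides against the separating family $\pi_{I,\tau,\nu}$ and chaining the definition of $\mathscr{B}_I$, \eqref{eq-spectral-BHY-bis} and \eqref{eq-spectral-CHR-refined}. Your extra check that $\alpha$ maps $\Ker\proj_I$ into $\Ker\proj_I$ is harmless but not needed, since both compositions in the diagram are already defined once (a) holds.
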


 \begin{proof} 
 For~(a), let us begin with $f \in \Cs(G_0; U_I)$, and set $g=\alpha(f)$.
We need to check that for every irreducible tempered representation $\pi$ with $\Iminfch(\pi)\notin U_I$, the operator $\pi(g)$ is zero.
By Theorem~\ref{th-strat-par-ImInfChar} this is equivalent to checking that for every  pair $(J,  \sigma)$  consisting of a subset $J$ of~$S$  and a tempiric representation $\sigma$ of $M_J$, and for every linear functional $\nu$ on~$\mathfrak{a}_J$, if the extension $\nu^\sharp$ of~$\nu$ to $\mathfrak{a}_{\min}$ lies in $\adomstar \setminus U_I$, then we have $\pi_{J,\sigma, \nu}(g)=0$.

Now the spectral property in Remark~\ref{rk-chr-constructif}(c) implies:
 \begin{equation} \label{eq-interm-a} \pi_{\sigma, \nu}(g) =\pi_{J,\sigma, \nu}\big(\alpha(f)\big) = \mathscr{B}_{\sigma, \nu}^{-1} \ \varrho_{J,\sigma, {\nu}}(f)\ \mathscr{B}_{\sigma, \nu}.\end{equation}
On the right-hand side, the representation $\varrho_{J,\sigma, \nu}$ is a countable direct sum of representations $\varrho_i$ which  satisfy  $\Iminfch(\varrho_i) =\nu$. 
 Since by assumption $f \in \Cs(G_0; U_I)$, we deduce  $\varrho_{J,\sigma, \nu}(f)=0$. By~\eqref{eq-interm-a} it follows that $\pi_{J,\sigma, \nu}(g)=0$, and part~(a) is proved.

\medskip

 For part~(b), we use the following observation. \emph{Suppose $\varphi$, $\varphi'$ are elements of the bottom-right corner $\Csr(G; I)$ of the diagram. Then $\varphi=\varphi'$ if and only if for every  tempiric representation $\tau$ of~$M_I$ and for every $\nu\in \aiplus$, we have $\pi_{I,\tau, \nu}(\varphi)=\pi_{I,\tau, \nu}(\varphi')$.}

This is because the spectrum of $\Csr(G; I)$ consists precisely of the representations $\pi_{I,\tau, \nu}$ in the previous sentence, and because irreducible representations of a $C^\ast$-algebra separate points.

 \medskip

  Now let $f$ be an element of $\Cs(G_0; \UI)$. Assertion~(b) is that in the bottom-right algebra $\Csr(G; I)$, the elements $\varphi=\proj_I\left(\alpha(f)\right)$ and $\varphi'=\mathscr{B}_I\big(\alpha^{I}\left(\proj_I(f)\right)\big)$ are equal. For every tempiric representation $\tau$ of $M_I$, and for every $\nu$ in $\aiplus$, we have
\begin{align*}  \pi_{I, \tau, \nu}(\varphi')
 &=\pi_{I, \tau, \nu}\big(\mathscr{B}_I(\alpha^I(\proj_I(f))\big) &&\text{by
  definition of $\varphi'$ }\\ 
 &= \mathscr{B}_{\tau, \nu}\,\pi_{I,\tau, \nu}\left(\alpha^I(\proj_I(f)\right)\,\mathscr{B}_{\tau, \nu}^{-1} &&\text{by definition of $\mathscr{B}_I$ and by~\eqref{eq-isom-cstarG-i}} \\ 
 &= \mathscr{B}_{\tau, \nu} \,\varrho_{I, \tau, \nu}\left(\proj_I(f)\right)\mathscr{B}_{\tau, \nu}^{-1}&& \text{by \eqref{eq-spectral-BHY-bis}} \\ 
 & =  \mathscr{B}_{\tau, \nu}\, \varrho_{I, \tau, \nu}(f)\,\mathscr{B}_{\tau, \nu}^{-1}&& \text{in our notation (see Remark~\ref{rk-notations-filtrBHY})}\\ 
 &= \pi_{I, \tau, \nu}\left(\alpha(f)\right)&& \text{by \eqref{eq-spectral-CHR-refined}}\\ 
 &= \pi_{I,\tau, \nu}(\varphi) &&\text{by definition of $\varphi$}.
\end{align*}
Using the previous observation this concludes the proof. 
 \end{proof}

 \begin{remark} The Mackey embedding $\alpha\colon \Cs(G_0)\longrightarrow \Csr(G)$ in \cite{CHR} actually arises from a one-parameter family of ``rescaling automorphisms'' $\alpha_t\colon \Csr(G)\longrightarrow \Csr(G)$, indexed by $t >0$, by taking an appropriate limit as $t$ goes to zero. The rescaling automorphisms $\alpha_t$ satisfy $\pi_{I, \tau, \nu} \circ \alpha_t \simeq \pi_{I, \tau, t^{-1} \nu}$ for all $t>0$, all $I \subset S$, all $\tau \in \hatMItempiric$, and all $\nu \in \mathfrak{a}_I^\ast$. At the level of the subquotients, there is also a local version of these rescaling automorphisms: there is a \emph{unique} one-parameter family $(\alpha_t^I)_{t>0}$ of automorphisms of $\Csr(G, I)$ such that $\pi_{I, \tau, \nu} \circ \alpha_t^I = \pi_{I, \tau, t^{-1} \nu}$ for all $t>0$, all $\tau \in \hatMItempiric$, and all $\nu \in \aiplus$. Here again, the spectral property of the local version is an equality of operators, which characterizes it uniquely, whereas the global version takes the form of a unitary equivalence. Using a family of intertwining operators $\big(\mathscr{B}_{\tau, \nu, t}\big)_{\nu \in \mathfrak{a}_I^\ast}$ for fixed $t$, as in \cite[Section~6]{CHR}, one may also formulate a compatibility statement between $\alpha_t$ and the local versions $\alpha_t^I$. This is slightly more precise than Proposition~\ref{prop-compatibility-embeddings}, since the latter can then be deduced  from this compatibility statement by taking the limit as $t \to 0$. We omit the details. 
 \end{remark}

\subsection{A converse statement and further remarks}\label{sec-converse-and-remarks}

The following simple lemma  shows that the compatibility properties~(a) and~(b) in Proposition~\ref{prop-compatibility-embeddings}, concerning the behavior of the Mackey embedding $\alpha$ on the ideals and subquotients of the infinitesimal character filtration, are enough to characterize the embedding $\alpha$. 

\begin{lemma}\label{lem-localglobal-bhy-bis} Suppose $\beta\colon \Cs(G_0) \longrightarrow \Csr(G)$ is a $\Cs$-morphism that satisfies:
\begin{enumerate}
    \item[(i)] For every subset $I$ of~$S$, the morphism $\beta $ maps $\Cs(G_0; \UI)$ into $\Csr(G; \UI)$; 
    \item[(ii)] For every subset $I$ of~$S$, the following diagram is commutative: 
    \begin{equation*}
\begin{tikzcd}
  \Cs(G_0; U_I)\arrow{d}{\proj_I}\arrow{r}{\beta}
  & \Csr(G; U_I)\arrow{d}{\proj_I} \\
\Cs(G_0; I)
\arrow{r}{\mathscr{B}_I\, \alpha^{I}} & \Csr(G; I).
\end{tikzcd}
\end{equation*}
    \end{enumerate}
    Then $\beta=\alpha$.
\end{lemma}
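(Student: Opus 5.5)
The strategy is to show that $\beta(f)$ and $\alpha(f)$ have the same image under every irreducible tempered representation of~$G$. Since irreducible representations of $\Csr(G)$ separate points, this will give $\beta(f)=\alpha(f)$. By Theorem~\ref{th-bhy-classification} every irreducible tempered representation is equivalent to some $\pi_{I,\tau,\nu}$ with $\nu\in\aiplus$. So it suffices to prove
\[\pi_{I,\tau,\nu}\big(\beta(f)\big)=\pi_{I,\tau,\nu}\big(\alpha(f)\big)\]
for all $I\subset S$, all tempiric $\tau$ of~$M_I$, all $\nu\in\aiplus$, and all $f\in\Cs(G_0)$.

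The difficulty is that hypotheses~(i)--(ii) only control $\beta$ on the ideals $\Cs(G_0;\UI)$, not on all of $\Cs(G_0)$. I would bridge this with the standard nondegeneracy trick. Fix $I$, $\tau$, $\nu$. The ideal $\Cs(G_0;\UI)$ has an approximate unit $(e_j)$. For $f\in\Cs(G_0)$, the products $e_j f$ lie in $\Cs(G_0;\UI)$ and converge to~$f$ in any representation of $\Cs(G_0)$ that is nondegenerate on $\Cs(G_0;\UI)$.

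So I would first check that $\pi=\pi_{I,\tau,\nu}$ restricted to $\Csr(G;\UI)$ is irreducible, hence nondegenerate. This holds because $\Iminfch(\pi)=\nu\in\aiplus\subset\UI$, so $\pi$ does not vanish on that ideal; then use Section~\ref{sec-basics-cstar-1}. Writing $h\in\Cs(G_0;\UI)$, hypothesis~(i) puts $\beta(h)$ and $\alpha(h)$ in $\Csr(G;\UI)$; for $\alpha$ this is Proposition~\ref{prop-compatibility-embeddings}(a). Hypothesis~(ii) together with Proposition~\ref{prop-compatibility-embeddings}(b) gives
\[\proj_I\big(\beta(h)\big)=\mathscr{B}_I\,\alpha^I\big(\proj_I(h)\big)=\proj_I\big(\alpha(h)\big).\]
Since $\pi$ factors through $\proj_I$ on $\Csr(G;\UI)$ (Remark~\ref{rk-notations-filtrBHY}), we get $\pi(\beta(h))=\pi(\alpha(h))$ for all $h\in\Cs(G_0;\UI)$.

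To extend to arbitrary $f$, I would write $\pi(\beta(f))\pi(\beta(h))=\pi(\beta(fh))$, with $fh\in\Cs(G_0;\UI)$ because it is an ideal. The right-hand side equals $\pi(\alpha(fh))=\pi(\alpha(f))\pi(\alpha(h))$. Also $\pi(\beta(h))=\pi(\alpha(h))$, so $\pi(\beta(f))$ and $\pi(\alpha(f))$ agree on the closed span of $\pi(\alpha(\Cs(G_0;\UI)))H$.

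The main obstacle is showing that this span is all of $\Ind H_\tau$, i.e.\ that $\pi\circ\alpha$ is nondegenerate on $\Cs(G_0;\UI)$. I would use Theorem~\ref{th-chr}: $\pi\circ\alpha\simeq\varrho_{I,\tau,\nu}$, which is a direct sum of irreducibles $\varrho_{I,\lambda,\nu}$ (as in \eqref{eq-dec-isotyp}), each with $\Iminfch=\nu\in\UI$. Each summand is therefore nonzero on the ideal $\Cs(G_0;\UI)$, and being irreducible it is nondegenerate there. A direct sum of nondegenerate representations is nondegenerate, and unitary equivalence preserves nondegeneracy. This gives $\pi(\beta(f))=\pi(\alpha(f))$, and separation of points concludes $\beta=\alpha$.
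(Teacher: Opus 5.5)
Your proposal is correct and follows the paper's proof: you reduce via separation of points to $\pi_{I,\tau,\nu}\circ\beta=\pi_{I,\tau,\nu}\circ\alpha$, obtain agreement on the ideal $\Cs(G_0;\UI)$ from hypothesis~(ii) and Proposition~\ref{prop-compatibility-embeddings}(b), and then extend from the ideal to all of $\Cs(G_0)$. Where the paper simply cites \cite[Proposition~2.10.4]{Dixmier}, you prove uniqueness of the extension directly, and you check the hypothesis that this extension step really needs: that $\pi\circ\alpha\simeq\varrho_{I,\tau,\nu}$ is nondegenerate on $\Cs(G_0;\UI)$, which the paper leaves implicit.
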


\begin{proof} 
Because the irreducible representations of~$\Csr(G)$ separate points, it is enough to check that for every irreducible tempered representation $\pi$ of $G$, the representations $\pi \circ \alpha$ and $\pi \circ \beta$ of $\Cs(G_0)$ are identical. 

Let $\pi$ be an irreducible tempered representation of~$G$. Up to unitary equivalence we may assume $\pi = \pi_{I, \tau, \nu}$ where $I \subset S$, where $\tau$ is a tempiric representation of~$M_I$, and $\nu \in \aiplus$. 

Suppose $f$ is an element of $\Cs(G_0; \UI)$ and $\varphi = \proj_I(f)$ is its image in $\Cs(G_0; I)$. We have 
\begin{align*}
\pi(\beta(f))& =\pi\big(\proj_I\left(\beta(f)\right)\big) && \text{in view of our notational conventions} \\
&= \pi\big(\mathscr{B}_I \alpha^I(\varphi)\big) &&  \text{ by Property~(ii) in the diagram}\\
&= \pi\left(\alpha(f)\right) &&  \text{ by Proposition~\ref{prop-compatibility-embeddings}(b).}
\end{align*}

Therefore $\pi \circ \alpha$ and $\pi\circ \beta$ are two nondegenerate representations of $\Cs(G_0)$ which coincide on the ideal $\Cs(G_0; \UI)$. By \cite[Proposition~2.10.4]{Dixmier} they must be equal, q.e.d.
\end{proof}

\begin{remark}\label{rk-discussion-unicite}
   Combining Lemma~\ref{lem-localglobal-bhy-bis} and Proposition~\ref{prop-compatibility-embeddings}, we obtain the following picture for the relationship between the Mackey embedding $\alpha\colon \Cs(G_0)\longrightarrow \Csr(G)$, whose construction depends on choices  in \cite{CHR},  and the local versions $\alpha^{I}\colon \Cs(G_0; I)\longrightarrow \Csr(G; I)$, which are canonical but defined only at the level of subquotients of the infinitesimal character filtration. 
   \begin{itemize}
       \item The embeddings $\alpha_I$ can be combined together to form the (``global'') Mackey embedding~$\alpha$, using the automorphisms $\mathscr{B}_I$, which therefore encapsulate all the choices made in \cite{CHR}.
       \item Once the automorphisms $\mathscr{B}_I$ are given, there is only one way to combine the canonical embeddings $\alpha^I$ into an embedding $\alpha\colon \Cs(G_0)\longrightarrow \Csr(G)$ that satisfies the spectral property~\eqref{eq-spectral-CHR}.
   \end{itemize}
\end{remark}

\begin{remark}\label{rk-non-unicite-du-plongement} When~$G$ is a complex semisimple Lie group, the earlier work of Higson and Rom\'an \cite{HigsonRoman} shows that each of the automorphisms  $\mathscr{B}_I$ can  be chosen to be the identity of~$\Csr(G; I)$. In general it is not possible to make such a simple choice, for the following reason. In \cite[Section~6]{CHR}, the operators $\mathscr{B}_{\tau, \nu}\colon \Ind H_\tau \longrightarrow \Ind H_\tau$ are constructed to be Knapp--Stein intertwiners between certain parabolically induced representations of~$G$. Unless all the relevant parabolically induced representations are irreducible (which is the case for complex groups), it is not possible for these operators $\mathscr{B}_{\tau, \nu}$ to be all scalar, and therefore the automorphisms $\mathscr{B}_I$ cannot all be trivial. As a result,  Remark~\ref{rk-discussion-unicite} is the most we can currently say about the (non-)uniqueness of the embedding $\alpha$, and its dependence on the choices in \cite{CHR}.     \end{remark}

\bibliographystyle{amsalpha}
\bibliography{bibliov2}

\end{document}